\documentclass[11pt,twoside]{amsart}

\usepackage{amsmath}
\usepackage{amsthm}
\usepackage{amsfonts, amssymb}
\usepackage{mathrsfs}
\usepackage[all]{xy}
\usepackage{url}

\setlength{\textwidth}{15cm}
\setlength{\topmargin}{0cm}
\setlength{\oddsidemargin}{.5cm}
\setlength{\evensidemargin}{.5cm}
\setlength{\textheight}{21.5cm}

\usepackage{latexsym}
\usepackage[dvips]{graphicx}

\theoremstyle{plain}
\newtheorem{lema}{Lemma}[section]
\newtheorem{prop}[lema]{Proposition}
\newtheorem{teo}[lema]{Theorem}
\newtheorem{conj}[lema]{Conjecture}

\newtheorem{coro}[lema]{Corollary}
\theoremstyle{remark}

\newtheorem{obs}[lema]{Remark}

\theoremstyle{definition}
\newtheorem{defi}[lema]{Definition}
\newtheorem{ej}[lema]{Example}

\newcommand{\scp}{\mbox{$\ \searrow \! \! \! \! \searrow \! \! \! \! \ \ \ $}}
\newcommand{\sep}{\mbox{$\ \nearrow \! \! \! \! \nearrow \! \! \! \! \ \ \ $}}
\newcommand{\esc}{\mbox{$\ \searrow \! \! \! \! \searrow \! \! \! \! ^e \ $}}

\newcommand{\ceroc}{\mbox{$\ \searrow \! \! \! \! ^0 \ $}}
\newcommand{\unoc}{\mbox{$\ \searrow \! \! \! \! ^1 \ $}}

\newcommand{\enemasunoc}{\mbox{$\ \searrow \! \! \! \! ^{n+1} \ $}}

\newcommand{\se}{\mbox{$\diagup \! \! \! \! \: \! \: \searrow \: $}}


\newcommand{\kp}{\mathcal{K}}
\newcommand{\x}{\mathcal{X}}

\newcommand{\nerve}{\mathcal{N}}

\pagestyle{myheadings} \markboth{{\sc  j.a. barmak, e.g. minian}}{{\sc Strong homotopy types, nerves and collapses}}

\begin{document}

\title[Strong homotopy types, nerves and collapses]{Strong homotopy types, nerves and collapses}

\author[J.A. Barmak]{Jonathan Ariel Barmak}
\author[E.G. Minian]{Elias Gabriel Minian}
\thanks{The authors' research is partially supported by Conicet and by grant
ANPCyT PICT 17-38280.}

\address{Departamento  de Matem\'atica\\
 FCEyN, Universidad de Buenos Aires\\ Buenos
Aires, Argentina}

\email{jbarmak@dm.uba.ar}
\email{gminian@dm.uba.ar}

\begin{abstract}
We introduce the theory of strong homotopy types of simplicial 
complexes. Similarly to classical simple homotopy theory, the strong 
homotopy types can be described by elementary moves. An elementary move 
in this setting is called a \textit{strong collapse} and it is a 
particular kind of  simplicial collapse. The advantage of using strong 
collapses is the existence and uniqueness of \textit{cores} and their 
relationship with the \textit{nerves} of the complexes.  From this 
theory we derive new results for studying simplicial collapsibility with 
a different point of view. We analyze vertex-transitive simplicial $G$-actions 
and prove a particular case of the Evasiveness conjecture for simplicial 
complexes. Moreover, we reduce the general conjecture to the class of 
\textit{minimal complexes}. We also strengthen a result of V. Welker on 
the barycentric subdivision of collapsible complexes. We obtain this and 
other results on collapsibility of polyhedra by means of the 
characterization of the different notions of collapses in terms of 
finite topological spaces.
\end{abstract}


\subjclass[2000]{57Q10, 55U05,  55P15, 55P91, 06A06, 05E25}

\keywords{Simplicial complexes, simple homotopy types, collapses, nerves, finite spaces, posets, non-evasiveness, simplicial actions.}

\maketitle

\section{Introduction}
The notion of simplicial collapse, introduced by J.H.C. Whitehead in the late thirties \cite{Whi}, is a fundamental tool in algebraic topology and in combinatorial geometry \cite{Coh,Gla}. Collapsible polyhedra are the centre of various famous problems, perhaps the most relevant of them being the Zeeman Conjecture \cite{Zee}. Although an elementary simplicial collapse is very simple to describe, it is not easy to determine whether a simplicial complex is collapsible. In fact, the Zeeman Conjecture, which states that $K\times I$ is collapsible for any contractible polyhedron $K$ of dimension $2$, is still an open problem. One of the main difficulties when studying collapsibility, or simple homotopy theory in general, is the non-uniqueness of what we call \it cores\rm. A \textit{core} of a complex $K$, in this setting, is a minimal element in the poset of all subcomplexes which expand to $K$. For instance, there exist collapsible complexes which collapse to nontrivial subcomplexes with no free faces.

In this article we introduce and develope the theory of strong homotopy types of simplicial complexes, which has interesting applications to problems of collapsibility. Similarly to the case of simple homotopy types, the strong homotopy types can be described in terms of elementary moves. A vertex $v$ of a simplicial complex $K$ is called \it dominated \rm if its link is a simplicial cone. An elementary strong collapse  $K \esc K\smallsetminus v$ consists of removing from a simplicial complex $K$ the open star of a dominated vertex $v$.  Surprisingly the notion of strong homotopy equivalence also arises from the concept of contiguity classes. It is not hard to see that a strong collapse is a particular case of a classical collapse. This new kind of collapse is much easier to handle for various reasons: the existence and uniqueness of cores, the fact that strong homotopy equivalences between \it minimal complexes \rm are isomorphisms and the relationship between this theory and  the \it nerves \rm of the complexes. A finite simplicial complex $K$ is minimal if it has no dominated vertices and a core of a finite simplicial complex $K$ in this context is a minimal subcomplex $K_0\subseteq K$ such that $K\scp K_0$. Minimal complexes appeared already in the literature under the name of \it taut \rm complexes \cite{Gru}. B. Gr\"unbaum investigated  the relationship between minimal complexes and nerves, but apparently he was unaware of the significant relationship between these notions and collapsibility. We will show that, in contrast to the classical situation, in this context all the cores of a complex $K$ are isomorphic and one can reach the core of a complex by iterating the nerve operation. This provides a new method to study collapsibility from a different viewpoint.

In order to investigate the difference between simplicial collapses and strong collapses, in Section \ref{eva} we relax the notion of strong collapse and define inductively divers notions of 
collapses which lie between the two concepts. In this way, the notion of non-evasiveness appears naturally in our context. The concept of non-evasiveness, commonly used in combinatorics and combinatorial geometry, is motivated by problems in graph theory \cite{Bjo3,Kah,Lut1,Lut2,Wel}. A non-evasive simplicial complex is collapsible (in the classical sense) but the converse is not true. Our approach is slightly different from the previous treatments of this subject, since we are more interested in understanding the difference between the various notions of collapses from a geometric point of view. One of the most significant questions related to this concept is the so-called \it Evasiveness conjecture for simplicial complexes \rm which asserts that a non-evasive simplicial complex endowed with a vertex-transitive action of some group $G$ is a simplex. The conjecture is still open for non-evasive complexes but it is known to be false for collapsible complexes in general \cite{Lut1,Lut2}.

A simplicial complex $K$ is vertex-homogeneous if for any pair of vertices $v,w\in K$, there exists an automorphism $\varphi\in Aut(K)$ such that $\varphi(v)=w$. We will show that a strong collapsible complex which is vertex-homogeneous is a simplex. This proves a particular case of the Evasiveness conjecture mentioned above and sheds some light on the general situation. Moreover, we will see that the square nerve $\nerve^2(K)$ of any vertex-homogeneous complex $K$ is its core and that any vertex-homogeneous complex is isomorphic to an $n$-th multiple of its core. As an easy and direct consequence of these results, one obtains that any vertex-homogeneous complex with a prime number of vertices is minimal or a simplex. One of the main results that we prove in this direction asserts that the core of a vertex-homogeneous and non-evasive complex is also
 vertex-homogeneous and non-evasive. In view of this result, one derives that the study of the Evasiveness conjecture for simplicial complexes can be reduced to the class of minimal (and non-evasive) complexes.

The theory of strong homotopy types for polyhedra is motivated by the homotopy theory of finite topological spaces. One can associate to any finite simplicial complex $K$, a finite $T_0$-space $\x(K)$ which corresponds to the poset of simplices of $K$. Conversely, one can associate to a given finite $T_0$-space $X$ the simplicial complex $\kp(X)$ of its non-empty chains.  
In \cite{Bar2} we have introduced the notion of collapse in the setting of finite spaces and proved that a collapse $X\searrow  Y$ between finite spaces induces a collapse $\kp(X) \searrow \kp(Y)$ between their associated simplicial complexes and a simplicial collapse $K \searrow L$ induces a collapse between the associated finite spaces. One advantage of working with finite spaces is that the elementary collapses in this context are very simple to handle and describe: they consist of removing a single point of the space, which is called a \it weak point. \rm In this paper we extend these results to the non-evasive case and the strong homotopy case. We define the notion of non-evasive collapse in the setting of finite spaces and prove that this notion corresponds exactly to its simplicial version by means of the functors $\x$ and $\kp$. Analogously we prove that the concept of strong homotopy equivalence in the simplicial context corresponds to the notion of homotopy equivalence in the setting of finite spaces. We use these results on finite spaces to derive results on polyhedra: we prove that a simplicial complex $K$ is strong collapsible if and only if its barycentric subdivsion $K'$ is strong collapsible and that the join $KL$ of two simplicial complexes is strong collapsible if and only if $K$ or $L$ is strong collapsible. As another direct consequence of our results one can easily deduce a stronger version of a known result of V. Welker on the barycentric subdivision of collapsible complexes \cite{Wel}. Indeed, we show that a simplicial collapse $K\searrow L$ determines a $1$-collapse $K'\unoc L'$. In particular if $K$ is collapsible, $K'$ is $1$-collapsible and therefore non-evasive. The converse of this result is still an open problem but it holds in the case that $K'$ is strong collapsible.

\section{Strong homotopy types} \label{sectionsht}  

We recall the notion of contiguity classes of simplicial maps. A standard reference for this is \cite{Spa}. Two simplicial maps $\varphi, \psi :K\to L$ are said to be \textit{contiguous} if $\varphi (\sigma)\cup \psi (\sigma)$ is a simplex of $L$ for every $\sigma \in K$. The equivalence classes of the equivalence relation generated by contiguity are called \textit{contiguity classes}. Contiguous maps are homotopic at the level of geometric realizations, but this notion is strictly stronger than usual homotopy. The difference between \textit{strong homotopy types} and the usual notion of homotopy types lies in this distiction between contiguity classes and homotopic maps.

Given a simplicial complex $K$ and a simplex $\sigma\in K$, the (closed) star of $\sigma$ will be denoted by $st_K(\sigma)$. It is the subcomplex of simplices $\tau$ such that $\tau \cup \sigma$ is a simplex of $K$. The \textit{link} $lk_K(\sigma)$ is the subcomplex of $st(\sigma)$ of simplices disjoint from $\sigma$.
If $K$ and $L$ are two disjoint complexes, the \textit{join} $K*L$ (or $KL$) is the complex whose simplices are those of $K$, those of $L$ and unions of simplices of $K$ and $L$.
A \textit{simplicial cone} is the join $aK$ of a complex $K$ and vertex $a$ not in $K$. 

All the simplicial complexes we deal with are assumed to be finite. We will write  ``complex" or ``simplicial complex" instead of ``finite simplicial complex".

\begin{defi}
Let $K$ be a complex and let $v\in K$ be a vertex. We denote by $K\smallsetminus v$ the full subcomplex of $K$ spanned by the vertices different from $v$ (the \textit{deletion} of the vertex $v$). We say that there is an \textit{elementary strong collapse} from $K$ to $K\smallsetminus v$ if $lk_K(v)$ is a simplicial cone $v'L$. In this case we say that $v$ is \textit{dominated} (by $v'$) and we denote $K \esc K\smallsetminus v$. There is a \textit{strong collapse} from a complex $K$ to a subcomplex $L$ if there exists a sequence of elementary strong collapses that starts in $K$ and ends in $L$. In this case we write $K \scp L$. The inverse of a strong collapse is a \textit{strong expansion} and two complexes $K$ and $L$ have the same \textit{strong homotopy type} if there is a sequence of strong collapses and strong expansions that starts in $K$ and ends in $L$. 
\end{defi}

\begin{figure}[h]
\begin{center}
\includegraphics[scale=0.5]{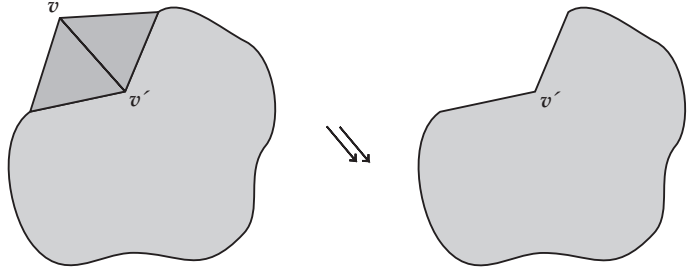}

\caption{An elementary strong collapse. The vertex $v$ is dominated by $v'$.}
\end{center}
\end{figure}

\begin{obs}
A vertex $v$ is dominated by a vertex $v'\neq v$ if and only if every maximal simplex that contains $v$ also contains $v'$.
\end{obs}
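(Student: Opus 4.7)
The plan is to prove both implications by unpacking the cone structure of $lk_K(v)$ in terms of its maximal simplices. The key bridging observation is that the assignment $\sigma\mapsto\sigma\smallsetminus\{v\}$ is a bijection between the maximal simplices of $K$ containing $v$ and the maximal simplices of $lk_K(v)$: every simplex of $K$ that contains $v$ has the form $\{v\}\cup\tau$ with $\tau\in lk_K(v)$, and this correspondence both preserves and reflects maximality (any enlargement of $\{v\}\cup\tau$ in $K$ still contains $v$ and hence removes to an enlargement of $\tau$ in $lk_K(v)$, and vice versa).

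For the forward implication, I would suppose $lk_K(v)=v'L$ for some subcomplex $L$ with $v'\notin L$. Every maximal simplex of the cone $v'L$ has the form $\{v'\}\cup\mu$ with $\mu$ a maximal simplex of $L$ (allowing the degenerate case $\mu=\emptyset$ when $L$ is the void complex, which yields $\{v'\}$), so every maximal simplex of $lk_K(v)$ contains $v'$. By the bridging observation, every maximal simplex of $K$ containing $v$ also contains $v'$.

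For the converse, I would first note that $\{v\}$ extends to some maximal simplex of $K$, which by hypothesis contains $v'$; therefore $\{v,v'\}\in K$ and $v'\in lk_K(v)$. I would then set $L:=lk_{lk_K(v)}(v')$ and claim $lk_K(v)=v'L$. The inclusion $v'L\subseteq lk_K(v)$ is immediate from the definition of $L$. For the reverse inclusion, given $\tau\in lk_K(v)$, extend $\tau$ to a maximal simplex $\tau_0$ of $lk_K(v)$; the bridging observation together with the hypothesis forces $v'\in\tau_0$, so $\tau_0\smallsetminus\{v'\}\in L$ and $\tau\subseteq\tau_0\in v'L$.

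I do not anticipate any substantive obstacle; the argument is essentially bookkeeping with the definitions. The only delicate point is handling the degenerate case $lk_K(v)=\{v'\}$, in which $L$ is the void complex: this still fits the convention that $v'L=\{v'\}$ qualifies as a simplicial cone, matching the situation where the unique maximal simplex of $K$ through $v$ is the edge $\{v,v'\}$.
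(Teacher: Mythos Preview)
Your proof is correct. The paper itself states this observation without proof (it is labeled as a \emph{Remark} and left to the reader), so there is no argument to compare against; your bookkeeping via the bijection $\sigma\mapsto\sigma\smallsetminus\{v\}$ between maximal simplices of $K$ through $v$ and maximal simplices of $lk_K(v)$ is exactly the natural way to unpack the definition, and both directions go through cleanly, including the degenerate case you flagged.
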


\begin{obs} \label{isosht}
It is not hard to see that isomorphic complexes have the same strong homotopy type. If $K$ is a complex and $v\in K$ is a vertex, we can consider for a vertex $v'$ not in $K$ the complex $L=K+ (v'st_K(v))=(K\smallsetminus v)+ (v'vlk_K(v))$. Since $lk_L(v')=vlk_K(v)$, then $L\scp K$. Moreover, by symmetry $L\scp L\smallsetminus v=\widetilde{K}$. One can use this process repeatedly to show that isomorphic complexes are strong homotopy equivalent.  
\end{obs}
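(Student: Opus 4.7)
The plan is to reduce the problem to a \emph{single vertex renaming} and then iterate. More precisely, call a complex $\widetilde{K}$ a \emph{one-vertex renaming} of $K$ if it is obtained from $K$ by replacing a single vertex $v$ by a new label $v'\notin K$ (and relabeling every simplex containing $v$ accordingly). I will first show that $K$ and $\widetilde{K}$ have the same strong homotopy type. Once this is established, an arbitrary simplicial isomorphism $\varphi\colon K\to K'$ can be realized by renaming the vertices of $K$ one at a time, using auxiliary fresh labels whenever necessary to avoid collisions (for instance, to swap two vertex names $a$ and $b$, first rename $a$ to some unused $c$, then rename $b$ to $a$, then rename $c$ to $b$). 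Since finite simplicial complexes have only finitely many vertices, finitely many such renamings suffice.

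To handle the single renaming, I would follow the construction suggested by the remark. Given $v\in K$ and $v'\notin K$, define the auxiliary complex
\[
L \;=\; K\,\cup\,\{\{v'\}\cup\tau : \tau\in st_K(v)\}.
\]
In particular $L$ contains $K$ as a full subcomplex, and also contains the new vertex $v'$ together with all simplices that join $v'$ to $st_K(v)$. The heart of the argument is the computation of two links in $L$. For the first, observe that a simplex $\tau$ not containing $v'$ belongs to $lk_L(v')$ precisely when $\tau\in st_K(v)$, so
\[
lk_L(v') \;=\; st_K(v) \;=\; v\cdot lk_K(v),
\]
which is a simplicial cone with apex $v$. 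Thus $v'$ is dominated by $v$ in $L$, and the elementary strong collapse $L\esc L\smallsetminus v'=K$ gives $L\scp K$.

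For the symmetric computation, a simplex $\tau$ not containing $v$ lies in $lk_L(v)$ iff $\tau\cup\{v\}\in L$; unwinding the definition of $L$ one finds that such $\tau$ is either in $lk_K(v)$ or of the form $\{v'\}\cup\sigma$ with $\sigma\in lk_K(v)\cup\{\emptyset\}$. Collecting these,
\[
lk_L(v) \;=\; v'\cdot lk_K(v),
\]
another simplicial cone, this time with apex $v'$. Hence $v$ is dominated by $v'$ in $L$, and $L\esc L\smallsetminus v$. But $L\smallsetminus v$ is by construction exactly the complex $\widetilde{K}$ obtained by relabeling $v$ as $v'$ in $K$. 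Chaining the two elementary collapses we obtain the zig-zag
\[
K \;\sep\; L \;\esc\; \widetilde{K},
\]
so $K$ and $\widetilde{K}$ share a strong homotopy type. Iterating this one-vertex argument along any sequence of renamings implementing a given isomorphism $\varphi\colon K\to K'$ proves the claim.

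There is no deep obstacle here; the only thing requiring a little care is the bookkeeping in step~(2), namely choosing fresh intermediate labels so that at every stage the new label is not already a vertex of the current complex. This is automatic in the finite setting, and in fact a single fresh label suffices to cycle through any permutation of vertex names.
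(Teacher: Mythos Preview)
Your proof is correct and follows exactly the approach sketched in the remark: you build the same auxiliary complex $L=K+(v'st_K(v))$, compute the two links to exhibit the zig-zag $K\sep L\scp\widetilde{K}$, and then iterate through one-vertex renamings. The only addition beyond the paper's outline is your explicit bookkeeping with fresh intermediate labels to realize an arbitrary permutation, which is precisely what the paper's phrase ``use this process repeatedly'' leaves to the reader.
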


The notion of strong collapse is related to the usual notion of collapse. Recall that if $K$ is a complex and $\sigma$ is a free face of $\tau$ (i.e. $\tau$ is the unique simplex of $K$ having $\sigma$ as a proper face) then we say that there is an \textit{elementary collapse} from $K$ to $K\smallsetminus \{\sigma, \tau\}$. A \textit{collapse} $K\searrow L$ from a complex $K$ to a subcomplex $L$ (or an \textit{expansion} $L\nearrow K$) is a sequence of elementary collapses from $K$ to $L$. Two complexes are said to have the same \textit{simple homotopy type} if there is a sequence of collapses and expansions from one to the other. If $K$ and $L$ have the same simple homotopy type, we write $K\se L$.

It is not hard to prove that if $K$ and $L$ are subcomplexes of a same complex, then $K+ L \searrow L$ if and only if $K\searrow K\cap L$. A complex is \textit{collapsible} if it collapses to a point. For instance, simplicial cones are collapsible. It is easy to prove that a complex $K$ is collapsible if and only if the simplicial cone $aK$ collapses to its base $K$.

\begin{obs} \label{weaker}
If $v\in K$ is dominated, $lk(v)$ is collapsible and therefore $st(v)=v lk(v)\searrow lk(v)=st(v) \cap (K\smallsetminus v)$. Then $K=st(v) + (K\smallsetminus v)\searrow K\smallsetminus v$. Thus, the usual notion of collapse is weaker than the notion of strong collapse.
\end{obs}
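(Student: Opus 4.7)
The plan is to verify the three displayed assertions of the remark in order, using only the facts recalled immediately above it, and then note that iterating the argument upgrades an elementary strong collapse to a classical collapse at each step of a strong collapse sequence.

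First I would unwind the hypothesis: by definition $v$ is dominated means $lk_K(v)=v'L$ for some vertex $v'$ and some subcomplex $L$, so $lk_K(v)$ is a simplicial cone. The text has just recalled that every simplicial cone is collapsible and that for a collapsible complex $M$ one has $aM\searrow M$. Applying this with $M=lk_K(v)$ and cone point $v$ gives $st_K(v)=v\,lk_K(v)\searrow lk_K(v)$. Next I would observe the set-theoretic identity $st_K(v)\cap (K\smallsetminus v)=lk_K(v)$: a simplex $\tau$ lies in the left-hand side iff $\tau\cup\{v\}\in K$ and $v\notin\tau$, which is exactly the defining condition for $\tau\in lk_K(v)$.

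Second, I would assemble the global collapse. Since $K$ is the union of $st_K(v)$ and $K\smallsetminus v$ (every simplex of $K$ either contains $v$, in which case it lies in $st_K(v)$, or it does not, in which case it lies in $K\smallsetminus v$), we have $K=st_K(v)+(K\smallsetminus v)$. The lemma recalled just above the remark states that for subcomplexes $A,B$ of a common complex, $A+B\searrow B$ whenever $A\searrow A\cap B$. Taking $A=st_K(v)$ and $B=K\smallsetminus v$, the already-established collapse $st_K(v)\searrow lk_K(v)=st_K(v)\cap(K\smallsetminus v)$ gives $K\searrow K\smallsetminus v$, which is the desired elementary collapse.

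Finally, an arbitrary strong collapse $K\scp L$ is by definition a composition of elementary strong collapses $K_i\esc K_i\smallsetminus v_i$, and the argument above turns each of these into a classical collapse $K_i\searrow K_i\smallsetminus v_i$; concatenating yields $K\searrow L$. I do not foresee a real obstacle here: the only subtle point is the identification $st_K(v)\cap (K\smallsetminus v)=lk_K(v)$, which is immediate once one remembers that $K\smallsetminus v$ is the \emph{full} subcomplex on the remaining vertices, so a simplex avoiding $v$ belongs to $K\smallsetminus v$ as soon as it belongs to $K$.
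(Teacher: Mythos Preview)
Your proposal is correct and follows exactly the argument sketched in the remark itself: the remark is self-proving, and you have simply unpacked each step using the facts recalled in the paragraph immediately preceding it (cones are collapsible, $aM\searrow M$ iff $M$ is collapsible, and $A+B\searrow B$ iff $A\searrow A\cap B$). There is nothing to add.
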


If two simplicial maps $\varphi , \psi :K\to L$ lie in the same contiguity class, we will write $\varphi \sim \psi$. It is easy to see that if $\varphi _1, \varphi _2 :K\to L$ and $\psi _1 ,\psi _2 :L \to M$ are simplicial maps such that $\varphi _1 \sim \varphi _2$ and $\psi _1 \sim \psi _2$, then $\psi _1 \varphi _1 \sim \psi _2 \varphi _2$.

\begin{defi} \label{stronghomotopyequivalence}
A simplicial map $\varphi :K\to L$ is a \textit{strong equivalence} if there exists $\psi :L \to K$ such that $\psi \varphi \sim 1_K$ and $\varphi \psi \sim 1_L$. If there is a strong equivalence $\varphi :K \to L$ we write $K\sim L$.
\end{defi}

The relation $\sim$ is clearly an equivalence relation.

\begin{defi}
A complex $K$ is a \textit{minimal complex} if it has no dominated vertices.
\end{defi}

\begin{prop}
Let $K$ be a minimal complex and let $\varphi :K \to K$ be simplicial map which lies in the same contiguity class as the identity. Then $\varphi$ is the identity.
\end{prop}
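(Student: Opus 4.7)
The plan is to reduce the general statement to the case of a single contiguity, and then exploit the maximality of simplices together with the criterion for being dominated given in the preceding remark.

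First, recall that $\varphi\sim 1_K$ means there is a finite chain $\varphi=\psi_0,\psi_1,\ldots,\psi_n=1_K$ of simplicial self-maps of $K$ in which every two consecutive maps are contiguous. I would argue by induction on $n$: if I can prove the statement for $n=1$ (i.e., $\varphi$ directly contiguous to $1_K$), then applying this to $\psi_{n-1}$ shows $\psi_{n-1}=1_K$, which shortens the chain, and the inductive step concludes.

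So the heart of the argument is the case $n=1$. Suppose $\varphi$ is contiguous to $1_K$. Fix a vertex $v\in K$ and let $\sigma$ be any maximal simplex of $K$ containing $v$. By contiguity, $\varphi(\sigma)\cup\sigma$ is a simplex of $K$. Since it contains $\sigma$ and $\sigma$ is maximal, we must have $\varphi(\sigma)\cup\sigma=\sigma$; in particular, $\varphi(v)\in\sigma$. Thus every maximal simplex of $K$ containing $v$ also contains $\varphi(v)$. If $\varphi(v)\neq v$, the remark following the definition of dominated vertex says that $v$ is dominated by $\varphi(v)$, contradicting the assumption that $K$ is minimal. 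Hence $\varphi(v)=v$ for every vertex $v$, so $\varphi=1_K$.

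I do not anticipate a serious obstacle: the argument uses only the definition of contiguity applied to maximal simplices together with the characterization of dominance recalled above. The one subtle point to be careful about is ensuring the reduction from a contiguity \emph{class} to a single contiguity step, but this is handled cleanly by the induction on the length of the chain.
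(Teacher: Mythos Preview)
Your proof is correct and follows essentially the same approach as the paper: reduce to the case of a single contiguity, then use that $\varphi(\sigma)\cup\sigma=\sigma$ for every maximal simplex $\sigma$ to conclude that each vertex $v$ would be dominated by $\varphi(v)$ unless $\varphi(v)=v$. The only difference is that you spell out the induction on the length of the contiguity chain, whereas the paper simply writes ``we may assume that $\varphi$ is contiguous to $1_K$'' and leaves that reduction implicit.
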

\begin{proof}
We may assume that $\varphi$ is contiguous to $1_K$. Let $v\in K$ and let $\sigma \in K$ be a maximal simplex such that $v\in \sigma$. Then $\varphi (\sigma)\cup \sigma$ is a simplex, and by the maximality of $\sigma$, $\varphi(v)\in \varphi (\sigma)\cup \sigma = \sigma$. Therefore every maximal simplex which contains $v$, also contains $\varphi (v)$. Hence, $\varphi (v)=v$, since $K$ is minimal.
\end{proof}

\begin{coro} \label{seminimal}
A strong equivalence between minimal complexes is an isomorphism.
\end{coro}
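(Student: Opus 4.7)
The plan is to apply the preceding proposition twice and then observe that a pair of simplicial maps that compose to the identity on both sides must be an isomorphism of complexes.

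Suppose $\varphi\colon K\to L$ is a strong equivalence between minimal complexes $K$ and $L$, with simplicial map $\psi\colon L\to K$ satisfying $\psi\varphi\sim 1_K$ and $\varphi\psi\sim 1_L$. First, I would note that $\psi\varphi\colon K\to K$ is a simplicial self-map of the minimal complex $K$ lying in the contiguity class of $1_K$, so by the proposition $\psi\varphi = 1_K$. Symmetrically, $\varphi\psi\colon L\to L$ is a simplicial self-map of the minimal complex $L$ in the contiguity class of $1_L$, hence $\varphi\psi = 1_L$.

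From these two honest equalities of simplicial maps it follows that $\varphi$ and $\psi$ are mutually inverse bijections between the vertex sets of $K$ and $L$. Since both $\varphi$ and $\psi$ are simplicial, a set $\sigma$ of vertices of $K$ is a simplex of $K$ if and only if $\varphi(\sigma)$ is a simplex of $L$ (the ``if'' direction uses that $\psi$ is simplicial, together with $\psi\varphi=1_K$). Therefore $\varphi$ is a simplicial isomorphism, as required.

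There is no real obstacle here: the content of the result is entirely packaged in the previous proposition, and the corollary is just the observation that an equivalence which becomes strict on both sides is an isomorphism.
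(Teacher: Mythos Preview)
Your proof is correct and is exactly the argument the paper intends: the corollary is stated without proof, and the implied reasoning is precisely to apply the preceding proposition to both composites $\psi\varphi$ and $\varphi\psi$, obtaining genuine equalities that force $\varphi$ to be a simplicial isomorphism.
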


\begin{prop} \label{shts}
Let $K$ be a complex and $v\in K$ a vertex dominated by $v'$. Then, the inclusion $i: K \smallsetminus v \hookrightarrow K$ is a strong equivalence. In particular, if two complexes $K$ and $L$ have the same strong homotopy type, then $K \sim L$.
\end{prop}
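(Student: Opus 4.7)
The plan is to exhibit an explicit simplicial retraction $r : K \to K \smallsetminus v$ and check that it is a homotopy inverse to $i$ up to contiguity, after which the second statement follows by transitivity of $\sim$.

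First I would define $r: K \to K \smallsetminus v$ on vertices by $r(v) = v'$ and $r(w) = w$ for every other vertex $w$. To check that $r$ is simplicial, take $\sigma \in K$. If $v \notin \sigma$ there is nothing to do. If $v \in \sigma$, pick a maximal simplex $\eta \supseteq \sigma$; by the remark that $v$ is dominated by $v'$ iff every maximal simplex containing $v$ also contains $v'$, we have $v' \in \eta$, so $r(\sigma) = (\sigma \smallsetminus \{v\}) \cup \{v'\} \subseteq \eta$ is a simplex of $K$, and it lies in $K \smallsetminus v$ by construction.

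Next I would verify the two contiguity conditions. The composite $r \circ i$ is literally the identity of $K \smallsetminus v$, so a fortiori $ri \sim 1_{K \smallsetminus v}$. For $i \circ r$ versus $1_K$, given $\sigma \in K$ I need $(ir)(\sigma) \cup \sigma$ to be a simplex of $K$. If $v \notin \sigma$ then $(ir)(\sigma) = \sigma$ and the union is $\sigma$. If $v \in \sigma$ then $(ir)(\sigma) \cup \sigma = \sigma \cup \{v'\}$, and the same maximal-simplex argument as above shows this is a simplex. Hence $ir$ is contiguous to $1_K$, so $i$ is a strong equivalence.

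For the second assertion, observe that an elementary strong collapse $K \esc K \smallsetminus v$ produces, by what we just proved, a strong equivalence $K \smallsetminus v \hookrightarrow K$; iterating gives $K \scp L$ implies $K \sim L$, and since $\sim$ is an equivalence relation (hence symmetric), the same conclusion holds for strong expansions. A zig-zag of strong collapses and expansions therefore composes to a single strong equivalence, proving that strong homotopy equivalent complexes satisfy $K \sim L$.

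The only place where care is required is the simpliciality of $r$ and the contiguity of $ir$ with $1_K$; both reduce to the same observation about maximal simplices containing $v$, so I expect no substantive obstacle beyond making that remark do its work twice.
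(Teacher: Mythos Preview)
Your proof is correct and follows essentially the same approach as the paper: define the retraction $r$ sending $v$ to $v'$, verify simpliciality and the contiguity $ir\sim 1_K$ via the maximal-simplex characterization of domination, and note $ri=1_{K\smallsetminus v}$. Your treatment of the second assertion is slightly more explicit than the paper's, but the argument is the same.
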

\begin{proof}
Define a vertex map $r: K \to K\smallsetminus v$ which is the identity on $K\smallsetminus v$ and such that $r(v)=v'$. If $\sigma \in K$ is a simplex with $v\in \sigma$, consider $\sigma'\supseteq \sigma$ a maximal simplex. Therefore $v'\in \sigma'$ and $r (\sigma)=\sigma \cup \{ v' \} \smallsetminus \{ v \} \subseteq \sigma'$ is a simplex of $K\smallsetminus v$. Moreover $ir (\sigma) \cup \sigma = \sigma \cup \{ v' \}\subseteq \sigma'$ is a simplex of $K$. This proves that $r$ is simplicial and that $ir$ is contiguous to $1_K$. Therefore, $i$ is a strong equivalence.
\end{proof}

\begin{defi}
A \textit{core} of a complex $K$ is a minimal subcomplex $K_0\subseteq K$ such that $K\scp K_0$.
\end{defi}

\begin{teo}
Every complex has a core and it is unique up to isomorphism. Two complexes have the same strong homotopy type if and only if their cores are isomorphic.
\end{teo}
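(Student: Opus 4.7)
The plan is to split the theorem into three claims: existence of a core, uniqueness of the core up to isomorphism, and the characterization of strong homotopy types by cores. Existence is the easy part: since $K$ is finite, one can iterate elementary strong collapses as long as some vertex is dominated; this process strictly decreases the number of vertices, so it must terminate at a subcomplex with no dominated vertices, which is by definition a core.

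For uniqueness, suppose $K_0$ and $K_1$ are two cores of $K$. Then $K\scp K_0$ and $K\scp K_1$, so the inclusions $i_0\colon K_0\hookrightarrow K$ and $i_1\colon K_1\hookrightarrow K$ are strong equivalences by iterating Proposition \ref{shts} along each collapsing sequence. Choose homotopy inverses $r_0\colon K\to K_0$ and $r_1\colon K\to K_1$ in the contiguity sense. Then $r_1 i_0\colon K_0\to K_1$ and $r_0 i_1\colon K_1\to K_0$ satisfy
\[
(r_0 i_1)(r_1 i_0)=r_0(i_1 r_1)i_0\sim r_0 i_0 \sim 1_{K_0},
\]
and symmetrically for the other composition, so $r_1 i_0$ is a strong equivalence between the two minimal complexes $K_0$ and $K_1$. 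By Corollary \ref{seminimal}, $r_1 i_0$ is an isomorphism.

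For the equivalence with strong homotopy type, the ``if'' direction is almost immediate: if the cores $K_0$ and $L_0$ of $K$ and $L$ are isomorphic, then by Remark \ref{isosht} they have the same strong homotopy type, and since $K\scp K_0$ and $L\scp L_0$, transitivity of the equivalence relation yields that $K$ and $L$ have the same strong homotopy type. For the converse, suppose $K$ and $L$ have the same strong homotopy type. By Proposition \ref{shts} and the fact that $\sim$ is an equivalence relation, we get $K\sim L$. Composing with the strong equivalences $K_0\hookrightarrow K$ and $L_0\hookrightarrow L$ supplied by Proposition \ref{shts}, we obtain a strong equivalence $K_0\to L_0$; since both are minimal, Corollary \ref{seminimal} forces them to be isomorphic.

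The only subtle point is the uniqueness argument, where one must take care that ``being a strong equivalence'' is preserved under the inverse operation up to contiguity and that the composition of strong equivalences is a strong equivalence — both of which reduce to the already observed fact that contiguity classes compose well (so that $\sim$ is a genuine equivalence relation). Once this bookkeeping is in place, Corollary \ref{seminimal} does all the real work.
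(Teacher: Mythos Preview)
Your proof is correct and follows essentially the same approach as the paper: existence by finiteness, uniqueness via Proposition~\ref{shts} and Corollary~\ref{seminimal}, and the characterization by combining these with Remark~\ref{isosht}. The only difference is cosmetic---you construct the strong equivalence $K_0\to K_1$ explicitly by composing inclusions and their contiguity-inverses, whereas the paper invokes the second sentence of Proposition~\ref{shts} directly to get $K_0\sim K_1$ in one step.
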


\begin{proof}
A core of a complex can be obtained by removing dominated points one at the time. If $K_1$ and $K_2$ are two cores of $K$, they have the same strong homotopy type and by Proposition \ref{shts}, $K_1 \sim K_2$. Since they are minimal, by Corollary \ref{seminimal} they are isomorphic.

Let $K$, $L$ be two complexes. If they have the same strong homotopy type, then also their cores $K_0$ and $L_0$ do. As above, we conclude that $K_0$ and $L_0$ are isomorphic.

Conversely, if $K_0$ and $L_0$ are isomorphic, they have the same strong homotopy type by Remark \ref{isosht}. Therefore $K$ and $L$ have the same strong homotopy type. 
\end{proof}

If $K$ and $L$ are two complexes such that $K\sim L$ and $K_0 \subseteq K$, $L_0 \subseteq L$ are their cores, then $K_0 \sim L_0$ and therefore $K_0$ and $L_0$ are isomorphic. Hence, we deduce the following

\begin{coro} \label{equivstrong}
Two complexes $K$ and $L$ have the same strong homotopy type if and only if $K\sim L$.
\end{coro}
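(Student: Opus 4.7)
The plan is to prove Corollary \ref{equivstrong} by routing everything through the cores, exactly as the paragraph preceding the corollary sets up.

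The forward direction is essentially immediate: if $K$ and $L$ have the same strong homotopy type, there is a finite zig-zag of elementary strong collapses and strong expansions connecting them. By Proposition \ref{shts}, each elementary strong collapse $M \esc M\smallsetminus v$ gives that the inclusion is a strong equivalence, and hence $M \sim M\smallsetminus v$. Since $\sim$ is an equivalence relation (as noted right after Definition \ref{stronghomotopyequivalence}), composing these gives $K \sim L$.

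For the converse I would argue as follows. Let $K_0 \subseteq K$ and $L_0 \subseteq L$ be cores of $K$ and $L$ respectively, which exist by the theorem preceding this corollary. By definition of cores, $K \scp K_0$ and $L \scp L_0$, so applying the forward direction (or just Proposition \ref{shts} iterated) we have $K \sim K_0$ and $L \sim L_0$. Assuming $K \sim L$, transitivity of $\sim$ gives $K_0 \sim L_0$. Now $K_0$ and $L_0$ are minimal by definition of a core, so by Corollary \ref{seminimal} any strong equivalence between them is an isomorphism; in particular $K_0 \cong L_0$. By Remark \ref{isosht}, isomorphic complexes have the same strong homotopy type, so $K_0$ and $L_0$ have the same strong homotopy type. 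Concatenating the strong collapses $K \scp K_0$, the chain of strong collapses and expansions between $K_0$ and $L_0$, and the strong expansion $L_0 \sep L$, we conclude that $K$ and $L$ have the same strong homotopy type.

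There is no real obstacle here: all the substantive work has been done in Proposition \ref{shts}, Corollary \ref{seminimal}, Remark \ref{isosht}, and the existence-and-uniqueness theorem for cores. The only thing to be careful about is not confusing the two equivalence relations — the strong homotopy type relation (generated by elementary strong collapses) and the relation $\sim$ (existence of a strong equivalence) — and making sure the argument bridges them via the cores rather than trying to turn an arbitrary strong equivalence directly into a sequence of elementary moves.
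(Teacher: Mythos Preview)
Your proof is correct and follows essentially the same route as the paper. The paper's argument is given in the paragraph immediately preceding the corollary: it passes to cores, uses $K_0 \sim L_0$ together with Corollary \ref{seminimal} to get $K_0 \cong L_0$, and then invokes the theorem on cores (which already packages Remark \ref{isosht} and the concatenation step) to conclude; you simply unpack that last step explicitly.
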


\begin{ej} \label{nosc}
The homogeneous 2-complex of Figure \ref{noscp} is collapsible, moreover it is non-evasive (see Section \ref{eva}). However, it is a minimal complex and therefore it does not have the strong homotopy type of a point. 

\begin{figure}[!h]
\begin{center} 
\includegraphics[scale=0.7]{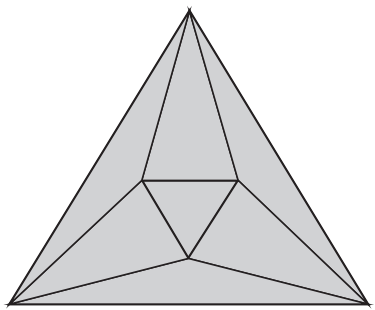} 
\caption{A collapsible complex which is not strong collapsible.}\label{noscp}
\end{center}
\end{figure}
\end{ej}

A complex is said to be \textit{strong collapsible} if it strong collapses to one of its vertices or, equivalently, if it has the strong homotopy type of a point. 

The barycentric subdivision $K'$ of a complex $K$ has as vertices the barycenters $\hat{\sigma}$ of the simplices of $K$ and the simplices $\{\hat{\sigma}_0, \hat{\sigma}_1, \ldots, \hat{\sigma}_r \}$ of $K'$ correspond to chains $\sigma_0\subseteq \sigma_1 \subseteq \ldots \subseteq \sigma_r$. It is well known that barycentric subdivisions preserve the simple homotopy type. In fact, stellar subdivisions do. In contrast to the case of simple homotopy, a complex and its barycentric subdivision need not have the same strong homotopy type. For example, the boundary of a 2-simplex and its barycentric subdivision are minimal non-isomorphic complexes, and therefore they do not have the same strong homotopy type. However, we will show that a complex $K$ is strong collapsible if and only if its barycentric subdivision is strong collapsible. Note that minimality is not preserved by barycentric subdivisions. In fact $K'$ is minimal if and only if $K$ has no free faces, or equivalently if $K'$ has no free faces. If $\sigma$ is a free face of $\tau$ in $K$, then $\hat{\sigma}$ is dominated by $\hat{\tau}$ in $K'$.

The following result relates the notion of strong equivalence with the concept of simple homotopy equivalence. For definitions and results on simple homotopy equivalences, the reader is referred to \cite{Coh}.  


\begin{prop}
Strong equivalences are simple homotopy equivalences.
\end{prop}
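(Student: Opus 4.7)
The plan is to leverage Remark \ref{weaker}, which says that every strong collapse is a classical collapse, together with the core theorem and Corollary \ref{seminimal}. If I can promote both the inclusion and the retraction of a strong collapse to simple homotopy equivalences, then the statement will follow by factoring an arbitrary strong equivalence through the cores of its source and target.

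First, I would show that every elementary strong collapse induces simple homotopy equivalences in both directions. If $v\in K$ is dominated by $v'$, Remark \ref{weaker} gives a classical collapse $K\searrow K\smallsetminus v$, so the inclusion $i:K\smallsetminus v\hookrightarrow K$ is a simple homotopy equivalence in the sense of Cohen \cite{Coh}. The retraction $r:K\to K\smallsetminus v$ built in Proposition \ref{shts} satisfies $ir\sim 1_K$, and since contiguous maps are homotopic on geometric realizations, $r$ is a homotopy inverse of $i$. A standard fact in simple homotopy theory (Whitehead torsion is homotopy-invariant and changes sign under homotopy inverses) then implies $r$ is also a simple homotopy equivalence. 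Iterating a sequence of elementary strong collapses along a strong collapse $K\scp K_0$, I obtain a simplicial inclusion $i_K:K_0\hookrightarrow K$ and a simplicial retraction $r_K:K\to K_0$, each a simple homotopy equivalence, with $i_Kr_K\sim 1_K$ and $r_Ki_K=1_{K_0}$.

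Now let $\varphi:K\to L$ be any strong equivalence, and let $K_0\subseteq K$, $L_0\subseteq L$ be the cores provided by the core theorem, with associated maps $i_K,r_K,i_L,r_L$ as in the previous step. Consider the composite
\[
\psi \; := \; r_L\circ\varphi\circ i_K:\ K_0\longrightarrow L_0.
\]
Since strong equivalences compose and $i_K,r_L$ are strong equivalences by Proposition \ref{shts}, $\psi$ is a strong equivalence between minimal complexes, hence an isomorphism by Corollary \ref{seminimal}. In particular $\psi$ is a simple homotopy equivalence, and therefore so is $i_L\psi r_K:K\to L$. On the other hand, the contiguities $i_Kr_K\sim 1_K$ and $i_Lr_L\sim 1_L$ give homotopies, whence $\varphi\simeq i_Lr_L\varphi i_Kr_K=i_L\psi r_K$. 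Since being a simple homotopy equivalence is a homotopy-invariant property, $\varphi$ itself is a simple homotopy equivalence.

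The main obstacle is of a bookkeeping nature rather than a conceptual one: one must be comfortable citing two standard facts from Cohen, namely that a homotopy inverse of a simple homotopy equivalence is again a simple homotopy equivalence, and that homotopic maps between finite complexes share this property. The bridge to our setting is the elementary observation that contiguity implies homotopy on geometric realizations, which converts the symbolic relation $\sim$ of Definition \ref{stronghomotopyequivalence} into an honest homotopy suitable for applying Whitehead torsion.
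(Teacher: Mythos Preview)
Your proof is correct and follows essentially the same route as the paper: factor through cores, use Corollary \ref{seminimal} to see that the induced map between cores is an isomorphism, and invoke Remark \ref{weaker} together with the standard facts from \cite{Coh} (homotopy invariance of torsion, closure under homotopy inverses and composition) to conclude. The only difference is cosmetic: you spell out explicitly the retraction $r_K$ and the homotopy $\varphi\simeq i_L\psi r_K$, whereas the paper compresses this into the single line ``$|r\varphi i|$, $|i|$ and $|r|$ are simple homotopy equivalences, so $|\varphi|$ is'' and leaves the reader to unwind the cancellation.
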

\begin{proof}
Let $\varphi :K\to L$ be a strong equivalence. Let $K_0$ be a core of $K$ and $L_0$ a core of $L$. Then, the inclusion $i: K_0\hookrightarrow K$ is a strong equivalence and there exists a strong equivalence $r:L \to L_0$ which is a homotopy inverse of the inclusion $L_0\hookrightarrow L$. Since $K_0$ and $L_0$ are minimal complexes, the strong equivalence $r\varphi i$ is an isomorphism and in particular $|r\varphi i|$ is a simple homotopy equivalences. By Remark \ref{weaker}, $|i|$ and $|r|$ are also simple homotopy equivalences, and then so is $|\varphi |$.  
\end{proof}

It is not known whether $K*L$ is collapsible only if one of $K$ or $L$ is (see \cite{Wel}), but the analogous result is true for strong collapsibility.

\begin{prop}
Let $K$ and $L$ be two complexes. Then, $K*L$ is strong collapsible if and only if $K$ or $L$ is strong collapsible.
\end{prop}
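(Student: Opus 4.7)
The plan is to establish both directions via the behaviour of dominated vertices under taking joins, together with the existence and uniqueness of cores proved above.

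First I would dispatch the easy direction. Suppose $K$ is strong collapsible, so $K \scp \{v\}$ for some vertex $v$. The key observation is that domination passes to the join: if $v_0$ is dominated by $v'$ in a complex $K$, every maximal simplex of $K*L$ has the form $\sigma*\tau$ with $\sigma$ maximal in $K$ and $\tau$ maximal in $L$, and any such simplex containing $v_0$ forces $v' \in \sigma \subseteq \sigma*\tau$; so $v_0$ remains dominated by $v'$ in $K*L$. Iterating the elementary strong collapses from $K$ down to $\{v\}$ inside $K*L$ then yields $K*L \scp \{v\}*L = vL$. Finally, in the cone $vL$ every $w \in L$ is dominated by $v$, because $lk_{vL}(w) = v*lk_L(w)$ is a simplicial cone, and this property survives after successively deleting the other vertices of $L$; hence $vL \scp \{v\}$. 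By symmetry, the same conclusion holds if $L$ is strong collapsible.

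For the converse I would argue the contrapositive. Assume that neither $K$ nor $L$ is strong collapsible, so the cores $K_0$ and $L_0$ each have at least two vertices. The join-preserves-domination observation above yields $K*L \scp K_0*L_0$, so it suffices to show that $K_0*L_0$ is minimal: it will then be the unique core of $K*L$, and since it has at least four vertices, $K*L$ cannot be strong collapsible.

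The main obstacle is precisely this minimality claim. I would suppose for contradiction that $v \in K_0$ is dominated in $K_0*L_0$ by some vertex $v'$, and split into two cases. If $v' \in K_0$, take a maximal simplex $\sigma \in K_0$ with $v \in \sigma$ and any maximal $\tau \in L_0$; then $\sigma*\tau$ is maximal in $K_0*L_0$ and contains $v$, so it contains $v'$, and disjointness of $K_0$ and $L_0$ forces $v' \in \sigma$, meaning $v$ was already dominated in $K_0$, contradicting minimality of $K_0$. If instead $v' \in L_0$, fix a maximal $\sigma \ni v$ in $K_0$ and let $\tau$ range over all maximal simplices of $L_0$; each $\sigma*\tau$ must contain $v'$, so $v'$ lies in every maximal simplex of $L_0$. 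But then every other $w \in L_0$ is dominated by $v'$ in $L_0$, contradicting minimality of $L_0$ (using that $L_0$ has a second vertex). By symmetry no vertex of $L_0$ is dominated in $K_0*L_0$ either, so $K_0*L_0$ is minimal and the argument is complete.
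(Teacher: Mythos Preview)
Your proof is correct, but the converse direction takes a genuinely different route from the paper's. The paper argues directly by induction on the number of vertices: given that $K*L$ is strong collapsible, pick a dominated vertex $v$ (say $v\in K$), note that $lk_{K*L}(v)=lk_K(v)*L$ being a cone forces either $L$ to be a cone (and we are done) or $lk_K(v)$ to be a cone; in the latter case $(K\smallsetminus v)*L=(K*L)\smallsetminus v$ is strong collapsible and the inductive hypothesis applies to the smaller pair $(K\smallsetminus v,\,L)$.

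Your argument instead goes through the structure theory of cores: you show that if $K_0$ and $L_0$ are both minimal with at least two vertices then $K_0*L_0$ is minimal, and invoke uniqueness of cores. This is slightly heavier in that it relies on the core machinery already developed, but it yields the stronger structural statement that the join of two minimal non-point complexes is again minimal (equivalently, the core of $K*L$ is the join of the cores whenever neither is a point). The paper's induction is lighter and self-contained, needing only the observation that a join $A*B$ is a cone iff one of $A$, $B$ is a cone; your approach trades that for a clean case analysis on where the dominating vertex lives and gives a reusable lemma about minimality of joins.
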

\begin{proof}
Suppose $v$ is a dominated vertex of $K$. Then $lk _K (v)$ is a cone and therefore $lk _{K*L} (v)=lk _K (v)*L$ is a cone. Hence, $v$ is also dominated in $K*L$. Thus, if $K$ strong collapses to a vertex $v_0$, $K*L \scp v_0L\scp v_0$.

Conversely, assume $K*L$ is strong collapsible. Let $v\in K*L$ be a dominated point and suppose without loss of generality that $v\in K$. Then $lk _{K*L} (v)=lk _{K} (v)*L$ is a cone. Therefore $lk _{K} (v)$ is a cone or $L$ is a cone. If $L$ is a cone, it is strong collapsible and we are done. Suppose then that $lk _K (v)$ is a cone. Since $(K\smallsetminus v)*L=(K*L)\smallsetminus v$ is strong collapsible, by induction $K\smallsetminus v$ or $L$ is strong collapsible and since $K\scp K \smallsetminus v$, $K$ or $L$ is strong collapsible. 
\end{proof}

\section{The nerve of a complex}

We introduce an application which transforms a complex in another complex with the same homotopy type. This construction was previously studied by B. Gr\"unbaum in \cite{Gru} (see also \cite{Lut1}). We arrived to this concept independently when studying $\check{\mathrm{C}}$ech cohomology of finite topological spaces (see \cite{Bar6}). This construction was already known to preserve the homotopy type of a complex, but we will show that when we apply the construction twice, one obtains a complex with the same strong homotopy type. Moreover, the nerve  can be used to obtain the core of a complex.

\begin{defi}
Let $K$ be a complex. The \textit{nerve} $\nerve (K)$ of $K$ is a complex whose vertices are the maximal simplices of $K$ and whose simplices are the sets of maximal simplices of $K$ with nonempty intersection. Given $n\ge 2$, we define recursively $\nerve^n(K)=\nerve(\nerve^{n-1}(K))$.
\end{defi}

A proof of the fact that $K$ is homotopy equivalent to its nerve can be given invoking a Theorem of Dowker \cite{Dow}. Let $V$ be the set of vertices of $K$ and $S$ the set of its maximal simplices. Define the relation $R\subseteq V\times S$ by $v R \sigma $ if $v\in \sigma$. Following Dowker, one can consider two complexes. The simplices of the first one are the finite subsets of $V$ which are related with a same element of $S$. This complex coincides with the original complex $K$. The second complex has as simplices the finite subsets of $S$ which are related with a same element of $V$. This complex is $\nerve (K)$. The Theorem of Dowker concludes that $|K|$ and $|\nerve (K)|$ have the same homotopy type.

\begin{ej}
Let $K$ be the complex on the left in Figure \ref{nerviop} with five maximal simplices $\sigma _1, \sigma _2, \sigma _3, \sigma _4$ and $\sigma _5$. Its nerve $\nerve (K)$ is the complex on the right whose vertices are the maximal simplices of $K$.
\bigskip
\bigskip

\begin{figure}[h]
\begin{minipage}{8cm}
\begin{center} \includegraphics[scale=0.8]{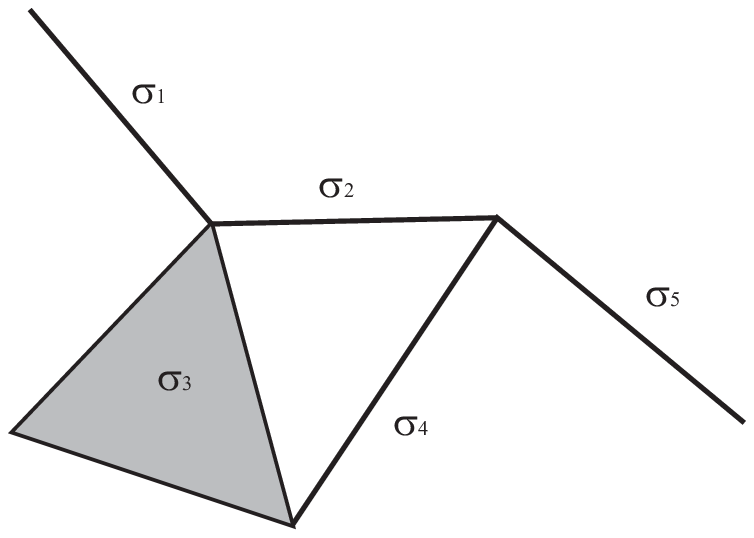} \end{center}
\end{minipage}
\
\begin{minipage}{6cm}
\begin{center} \includegraphics[scale=0.8]{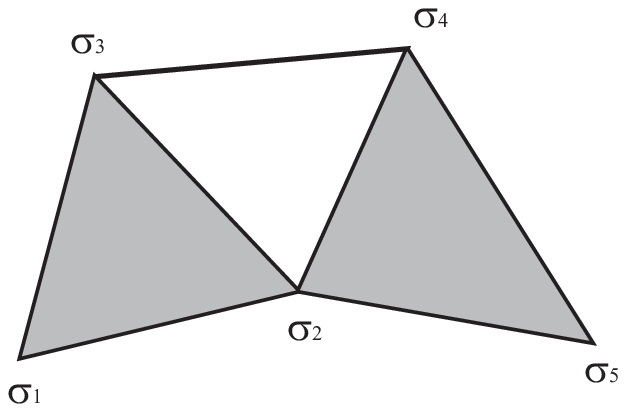} \end{center}
\end{minipage}
\bigskip
\begin{center} \caption{A complex on the left and its nerve on the right.}\label{nerviop} \end{center}
\end{figure}

\end{ej}

If $\nerve^r(K)=*$ for some $r\ge 1$, then $|K|$ is contractible. But there are contractible complexes such that $\nerve^r(K)$ is not a point for any $r$. For instance, if $K$ is the complex of Example \ref{nosc}, $\nerve(K)$ has more vertices than $K$, but $\nerve^2(K)$ is isomorphic to $K$. Therefore $\nerve^r(K)\neq *$ for every $r$.

We will see that, in fact, there is a strong collapse from $K$ to a complex isomorphic to $\nerve^2(K)$ and that there exists $r$ such that $\nerve^r(K)=*$ if and only if $K$ is strong collapsible.

\begin{lema} \label{alema1}
Let $L$ be a full subcomplex of a complex $K$ such that every vertex of $K$ which is not in $L$ is dominated by some vertex in $L$. Then $K\scp L$.
\end{lema}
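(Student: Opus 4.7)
I would argue by induction on the number $n$ of vertices of $K$ that do not lie in $L$. When $n=0$, every vertex of $K$ is a vertex of $L$, and since $L$ is full we get $K=L$, so there is nothing to prove. For the inductive step, pick any vertex $v\in K$ with $v\notin L$; by hypothesis $v$ is dominated by some $v'\in L$, so we have the elementary strong collapse $K\esc K\smallsetminus v$. It now suffices to verify that the pair $(K\smallsetminus v, L)$ satisfies the hypothesis of the lemma, and then to invoke the induction hypothesis on $K\smallsetminus v$.

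First, $L$ is still a full subcomplex of $K\smallsetminus v$: any simplex of $K\smallsetminus v$ with vertices in $L$ is in particular a simplex of $K$ with vertices in $L$, hence lies in $L$ by fullness in $K$. The real content is to check that every vertex $w\in K\smallsetminus v$ with $w\notin L$, dominated in $K$ by some $w'\in L$, is still dominated by $w'$ in $K\smallsetminus v$. Equivalently, I must show that every maximal simplex $\sigma$ of $K\smallsetminus v$ containing $w$ also contains $w'$.

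The step I expect to be the main (mild) obstacle is that maximal simplices of $K\smallsetminus v$ need not be maximal in $K$, so I cannot directly apply the hypothesis that $w'\in\sigma$ whenever $\sigma$ is maximal in $K$ containing $w$. To handle this I would split into two cases. If $\sigma\cup\{v\}$ is not a simplex of $K$, then one sees immediately that $\sigma$ is already maximal in $K$ (any strictly larger simplex of $K$ would either violate maximality in $K\smallsetminus v$ or force $\sigma\cup\{v\}$ to be a simplex), so $w'\in\sigma$ by hypothesis. Otherwise, pick a maximal simplex $\tau$ of $K$ with $\sigma\cup\{v\}\subseteq\tau$; since $w\in\tau$, domination gives $w'\in\tau$, and therefore $\sigma\cup\{w'\}\subseteq\tau$ is a simplex of $K$. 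Because $w'\in L$ and $v\notin L$ we have $w'\neq v$, so $\sigma\cup\{w'\}$ is actually a simplex of $K\smallsetminus v$; maximality of $\sigma$ in $K\smallsetminus v$ forces $w'\in\sigma$, as desired.

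Having established that the hypothesis propagates from $(K,L)$ to $(K\smallsetminus v,L)$, the induction hypothesis yields $K\smallsetminus v\scp L$, and combined with the elementary strong collapse $K\esc K\smallsetminus v$ this gives $K\scp L$.
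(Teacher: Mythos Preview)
Your proof is correct and follows the same inductive strategy as the paper: remove a vertex $v\notin L$, verify the hypothesis passes to $(K\smallsetminus v,L)$, and apply induction. The only difference is cosmetic: where you check preservation of domination via the maximal-simplex characterization and a two-case argument, the paper uses the link description directly, observing that if $lk_K(w)=aM$ with $a\in L$ then $lk_{K\smallsetminus v}(w)=a(M\smallsetminus v)$ is still a cone with apex in $L$.
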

\begin{proof}
Let $v$ be a vertex of $K$ which is not in $L$. By hypothesis, $v$ is dominated and then $K\scp K\smallsetminus v$. Now suppose $w$ is a vertex of $K\smallsetminus v$ which is not in $L$. Then, the link $lk_K(w)$ in $K$ is a simplicial cone $aM$ with $a\in L$. Therefore, the link $lk_{K\smallsetminus v}(w)$ in $K\smallsetminus v$ is $a(M\smallsetminus v)$. By induction $K\smallsetminus v \scp L$ and then $K\scp L$. 
\end{proof}

\begin{prop} \label{aprop11}
Let $K$ be a simplicial complex. Then, there exists a complex $L$ isomorphic to $\nerve^2(K)$ such that $K\scp L$.
\end{prop}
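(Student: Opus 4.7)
The plan is to build $L$ as a full subcomplex of $K$ in a canonical way and then verify both that $L\cong\nerve^2(K)$ and that $K\scp L$ via Lemma \ref{alema1}. The starting point is to unpack $\nerve^2(K)$. For each vertex $v$ of $K$, let $S_v$ denote the set of maximal simplices of $K$ containing $v$. A nonempty set $\{\sigma_1,\dots,\sigma_r\}$ of maximal simplices of $K$ is a simplex of $\nerve(K)$ iff $\bigcap_i \sigma_i\neq\emptyset$, and any such set is contained in $S_v$ for any $v$ in the intersection. Hence the maximal simplices of $\nerve(K)$ are precisely the maximal elements of the poset $\mathcal P=\{S_v : v\in V(K)\}$ ordered by inclusion. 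In particular, the vertex set of $\nerve^2(K)$ is exactly this set of maximal elements.

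Guided by this, for each maximal $M\in\mathcal P$ I would choose one vertex $v_M$ of $K$ with $S_{v_M}=M$ (the assignment $M\mapsto v_M$ is automatically injective, since $v_M$ determines $S_{v_M}$). Let $L$ be the full subcomplex of $K$ spanned by $\{v_M\}_M$. To see that $L\cong\nerve^2(K)$, I would check directly on simplices: a set $\{M_1,\dots,M_r\}$ of vertices of $\nerve^2(K)$ spans a simplex there iff $\bigcap_i M_i\neq\emptyset$ in $\nerve(K)$, i.e.\ iff some maximal simplex $\tau\in K$ belongs to every $M_i=S_{v_{M_i}}$, which is the same as saying every $v_{M_i}$ lies in $\tau$, i.e.\ $\{v_{M_1},\dots,v_{M_r}\}$ is a simplex of $L$. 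This gives the required simplicial isomorphism.

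For $K\scp L$, I would apply Lemma \ref{alema1}: $L$ is full by construction, so it suffices to show every vertex of $K$ not in $L$ is dominated by some vertex of $L$. Given $v\in V(K)\smallsetminus V(L)$, pick any maximal $M\in\mathcal P$ with $S_v\subseteq M$ (exists since $\mathcal P$ is finite). Then $v_M\in L$ and $v_M\neq v$ (otherwise $v=v_M\in L$), and since $S_v\subseteq S_{v_M}$ every maximal simplex containing $v$ also contains $v_M$, so $v_M$ dominates $v$. Lemma \ref{alema1} then yields $K\scp L$, completing the proof.

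I expect the only real obstacle to be spotting the right construction, namely that the subcomplex $L$ should be spanned by one representative vertex for each maximal element of the poset $\mathcal P=\{S_v\}$; once one has this picture, both the isomorphism with $\nerve^2(K)$ and the domination argument reduce to straightforward bookkeeping on the correspondence $v\leftrightarrow S_v$.
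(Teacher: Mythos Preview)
Your proof is correct and follows essentially the same route as the paper's. The paper defines a map $\varphi:\nerve^2(K)\to K$ sending each maximal intersecting family $\Sigma$ to a chosen vertex in $\bigcap\Sigma$, then checks it is simplicial, injective, and has full image; you instead first identify the vertices of $\nerve^2(K)$ with the maximal elements of $\{S_v\}$ and then build the inverse map $M\mapsto v_M$, but the content (including the final domination argument via Lemma~\ref{alema1}) is the same.
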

\begin{proof}
A vertex of $\nerve^2(K)$ is a maximal family $\Sigma =\{\sigma _0, \sigma _1, \ldots , \sigma _r \}$ of maximal simplices of $K$ with nonempty intersection. Consider a vertex map $\varphi : \nerve^2(K)\to K$ such that $\varphi(\Sigma) \in \bigcap\limits_{i=0}^r \sigma _i$. This is a simplicial map for if $\Sigma _0,\Sigma _1,\ldots , \Sigma _r$ constitute a simplex of $\nerve^2(K)$, then there is a common element $\sigma$ in all of them, which is a maximal simplex of $K$. Therefore $\varphi (\Sigma _i)\in \sigma $ for every $0\le i\le r$ and then $\{\varphi (\Sigma _1), \varphi (\Sigma _2), \ldots , \varphi (\Sigma _r) \}$ is a simplex of $K$.

The vertex map $\varphi $ is injective. If $\varphi (\Sigma _1)=v=\varphi (\Sigma _2)$ for $\Sigma _1=\{\sigma _0, \sigma _1, \ldots , \sigma _r \}$, $\Sigma _2=\{\tau _0, \tau _1, \ldots , \tau _t \}$, then $v\in \sigma _i$ for every $0\le i\le r$ and $v\in \tau _i$ for every $0\le i\le t$. Therefore $\Sigma _1 \cup \Sigma _2$ is a family of maximal simplices of $K$ with nonempty intersection. By the maximality of $\Sigma _1$ and $\Sigma _2$, $\Sigma _1=\Sigma _1 \cup \Sigma _2=\Sigma _2$. 

Suppose $\Sigma _0, \Sigma_1 ,\ldots , \Sigma _r$ are vertices of $\nerve^2(K)$ such that $v_0=\varphi (\Sigma _0), v_1=\varphi (\Sigma _1), \ldots ,$ $v_r=\varphi (\Sigma _r)$ constitute a simplex of $K$. Let $\sigma $ by a maximal simplex of $K$ which contains $v_0,v_1,\ldots ,v_r$. Then, by the maximality of the families $\Sigma _i$, $\sigma \in \Sigma _i$ for every $i$ and therefore $\{ \Sigma _0, \Sigma_1 ,\ldots , \Sigma _r \}$ is a simplex of $\nerve^2(K)$.

This proves that $L=\varphi (\nerve^2(K))$ is a full subcomplex of $K$ which is isomorphic to $\nerve^2(K)$.

Now, suppose $v$ is a vertex of $K$ which is not in $L$. Let $\Sigma $ be the set of maximal simplices of $K$ which contain $v$. The intersection of the elements of $\Sigma$ is nonempty, but $\Sigma $ could be not maximal. Let $\Sigma '\supseteq \Sigma$ be a maximal family of maximal simplices of $K$ with nonempty intersection. Then $v'=\varphi (\Sigma ')\in L$ and if $\sigma $ is a maximal simplex of $K$ which contains $v$, then $\sigma \in \Sigma \subseteq \Sigma '$. Hence, $v' \in \sigma$. Therefore $v$ is dominated by $v'$. By Lemma \ref{alema1}, $K\scp L$.   
\end{proof}

\begin{ej}
In the complex $K$ of Figure \ref{nervecuadrado} there are, according to the previous Proposition, two possible ways of regarding $\nerve ^2 (K)$ as a subcomplex of $K$. One of these is the full subcomplex with vertices $a,b,c$ and $d$. 

\begin{figure}[h]
\begin{center}
\includegraphics[scale=0.8]{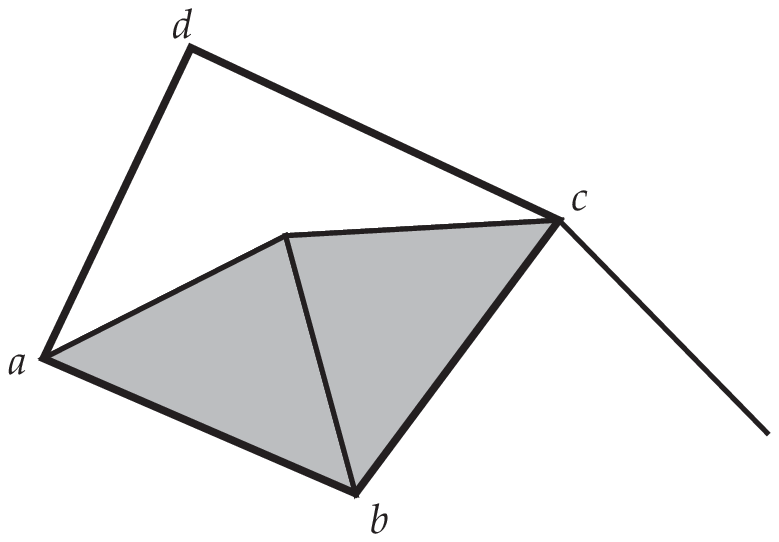}
\caption{$\nerve ^2(K)$ as a subcomplex of $K$.}\label{nervecuadrado}
\end{center}
\end{figure}
\end{ej}

The nerve can be used to characterize minimal complexes as the following result shows (cf. \cite{Gru} Theorem 9).

\begin{lema}
A complex $K$ is minimal if and only if $\nerve^2(K)$ is isomorphic to $K$.
\end{lema}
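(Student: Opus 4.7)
The plan is to exhibit a natural candidate bijection between $V(K)$ and $V(\nerve^2(K))$ and show that it is a simplicial isomorphism precisely when $K$ is minimal. For each vertex $v \in K$, write $\Sigma_v$ for the set of maximal simplices of $K$ that contain $v$; this is always a family of maximal simplices with nonempty intersection (it contains $v$), but it need not be a maximal such family in general. A vertex of $\nerve^2(K)$ is by definition a maximal family of maximal simplices of $K$ with nonempty intersection, so the candidate map is $v \mapsto \Sigma_v$.

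For the forward direction, assume $K$ is minimal. First I would check that $\Sigma_v$ is always a maximal family: if $\Sigma_v \cup \{\tau\}$ had nonempty intersection for some maximal simplex $\tau$ with $v \notin \tau$, then some $w \in \tau$ would lie in every maximal simplex containing $v$, forcing $v$ to be dominated by $w \neq v$ and contradicting minimality. Next, if $\Sigma_v = \Sigma_w$ with $v \neq w$, then $v$ is dominated by $w$, again contradicting minimality, so the assignment $v \mapsto \Sigma_v$ is injective. Surjectivity is immediate: given any maximal family $\Sigma$ and any $v \in \bigcap \Sigma$, we have $\Sigma \subseteq \Sigma_v$, and maximality of $\Sigma$ together with the nonemptiness of $\bigcap \Sigma_v$ forces $\Sigma = \Sigma_v$. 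Checking that the bijection is simplicial is then a direct unwinding of the definition of $\nerve$: the set $\{v_0, \ldots, v_r\}$ is a simplex of $K$ iff some maximal simplex of $K$ contains all the $v_i$, iff that simplex lies in $\bigcap_i \Sigma_{v_i}$, iff $\{\Sigma_{v_0}, \ldots, \Sigma_{v_r}\}$ is a simplex of $\nerve^2(K)$.

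For the reverse direction, assume $\nerve^2(K) \cong K$, so $|V(K)| = |V(\nerve^2(K))|$. The map $\varphi : \nerve^2(K) \to K$ built in the proof of Proposition \ref{aprop11} is injective on vertices, hence a bijection on vertices; its image $L$ is a full subcomplex of $K$ containing every vertex, so $L = K$ and $\varphi$ is itself an isomorphism. Its inverse $\psi : V(K) \to V(\nerve^2(K))$ sends each $v$ to the unique maximal family $\Sigma$ with $\varphi(\Sigma) = v$, so in particular $v \in \bigcap \Sigma$, and by the same maximality argument used above we must have $\psi(v) = \Sigma_v$. If some vertex $v$ were dominated by $w \neq v$, then $\Sigma_v \subseteq \Sigma_w$; since $\Sigma_v$ is a maximal family (being in the image of $\psi$), this forces $\Sigma_v = \Sigma_w$, contradicting injectivity of $\psi$. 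Hence no vertex of $K$ is dominated and $K$ is minimal.

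The main obstacle is the subtlety of the reverse direction: the hypothesis $\nerve^2(K) \cong K$ does not directly assert that the $\varphi$ supplied by Proposition \ref{aprop11} is an isomorphism, only that the vertex cardinalities agree, so one must first upgrade $\varphi$ via fullness of its image before translating domination into a collision under $\psi$. The forward direction is conceptually cleaner but requires careful tracking of the recursive definition of $\nerve^2(K)$ to confirm that $v\mapsto \Sigma_v$ is simplicial.
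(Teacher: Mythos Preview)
Your proof is correct. Both directions go through as written: in the forward direction the map $v\mapsto\Sigma_v$ is indeed a simplicial isomorphism when $K$ is minimal, and in the reverse direction the vertex-count argument forces the $\varphi$ of Proposition~\ref{aprop11} to be bijective, after which identifying $\psi(v)=\Sigma_v$ rules out domination.

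However, your route is substantially more laborious than the paper's, particularly in the forward direction. The paper simply observes that Proposition~\ref{aprop11} already gives $K\scp L$ with $L\cong\nerve^2(K)$; if $K$ is minimal there are no elementary strong collapses available, so $L=K$ and the isomorphism is immediate. You instead reconstruct the isomorphism $v\mapsto\Sigma_v$ by hand and verify well-definedness, injectivity, surjectivity, and simpliciality separately---all correct, but redundant given what Proposition~\ref{aprop11} has already established. For the reverse direction the paper argues contrapositively: if $v$ is dominated by $v'$, then whenever $v\in\bigcap\Sigma$ one also has $v'\in\bigcap\Sigma$, so one may choose $\varphi$ to avoid $v$ entirely, giving $\nerve^2(K)$ strictly fewer vertices than $K$. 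Your argument reaches the same contradiction from the other side (forcing $\Sigma_v=\Sigma_w$ and violating injectivity of $\psi$). The two reverse arguments are close in spirit; the real difference is that the paper extracts the forward direction for free from the strong-collapse machinery, while you supply an explicit inverse to $\varphi$.
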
 
\begin{proof}
By Proposition \ref{aprop11}, there exists a complex $L$ isomorphic to $\nerve^2(K)$ such that $K\scp L$. Therefore, if $K$ is minimal, $L=K$.

If $K$ is not minimal, there exists a vertex $v$ dominated by other vertex $v'$. If $v$ is contained in each element of a maximal family $\Sigma$ of maximal simplices of $K$ with nonempty intersection, then the same occurs with $v'$. Therefore, we can define the map $\varphi $ of the proof of Proposition \ref{aprop11} so that $v$ is not in its image. Therefore, $L=\varphi (\nerve^2(K))$ is isomorphic to $\nerve^2(K)$ and has less vertices than $K$. Thus, $\nerve^2(K)$ and $K$ cannot be isomorphic.  
\end{proof}

The sequence $K, \nerve^{2}(K), \nerve^{4}(K), \nerve^{6}(K), \ldots $ is a decreasing sequence of subcomplexes of $K$ (up to isomorphism). Therefore, there exists $n\ge 1$ such that $\nerve^{2n}(K)$ and $\nerve^{2n+2}(K)$ are isomorphic. Then $K$ strongly collapses to a subcomplex $L$ which is isomorphic to $\nerve^{2n}(K)$ and which is minimal. Thus, we have proved the following 

\begin{prop}
Given a complex $K$, there exists $n\ge 1$ such that $\nerve^n(K)$ is isomorphic to the core of $K$.
\end{prop}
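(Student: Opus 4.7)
The plan is to iterate Proposition \ref{aprop11} and invoke finiteness, using the characterization of minimal complexes as those fixed by $\nerve^2$.

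First, I would apply Proposition \ref{aprop11} inductively to build a descending chain of subcomplexes
\[ K \;\supseteq\; L_1 \;\supseteq\; L_2 \;\supseteq\; L_3 \;\supseteq\; \cdots \]
with $K \scp L_1 \scp L_2 \scp \cdots$ and $L_k \cong \nerve^{2k}(K)$. For the base case, Proposition \ref{aprop11} gives a subcomplex $L_1 \subseteq K$ with $L_1 \cong \nerve^2(K)$ and $K \scp L_1$. Inductively, applying the same proposition to $L_k$ produces a subcomplex $L_{k+1} \subseteq L_k$ (hence $L_{k+1} \subseteq K$) with $L_{k+1} \cong \nerve^2(L_k) \cong \nerve^{2(k+1)}(K)$ and $L_k \scp L_{k+1}$.

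Since $K$ is finite, the cardinalities of the vertex sets of the $L_k$ form a non-increasing sequence of non-negative integers, so they must stabilize. Thus there exists $n \ge 1$ with $L_n = L_{n+1}$ as subcomplexes of $K$. But $L_{n+1}$ is, by construction, isomorphic to $\nerve^2(L_n)$, and $L_{n+1} = L_n$ gives $L_n \cong \nerve^2(L_n)$. By the preceding lemma characterizing minimal complexes, $L_n$ is minimal.

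Finally, combining the collapses $K \scp L_1 \scp \cdots \scp L_n$ into a single strong collapse $K \scp L_n$ and using minimality of $L_n$, we conclude that $L_n$ is a core of $K$. Since $L_n \cong \nerve^{2n}(K)$, the value $2n$ satisfies the conclusion. The only subtle point is ensuring that each iteration really produces a subcomplex of the previous term and not merely of the original $K$; this is immediate from the way $\varphi$ is constructed in the proof of Proposition \ref{aprop11}, applied with the ambient complex $L_k$ in place of $K$.
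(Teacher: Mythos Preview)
Your proof is correct and follows essentially the same argument as the paper: iterate Proposition \ref{aprop11} to obtain a descending chain of subcomplexes isomorphic to the even nerves, use finiteness to find a stabilization point, and invoke the preceding lemma to conclude minimality. The paper presents this more tersely in the paragraph immediately before the proposition, but the content is the same.
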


\begin{teo}
Let $K$ be a complex. Then, $K$ is strong collapsible if and only if there exists $n\ge 1$ such that $\nerve^n(K)$ is a point.
\end{teo}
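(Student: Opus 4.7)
The plan is to leverage the preceding proposition, which states that for every complex $K$ there is some $n\ge 1$ with $\nerve^n(K)$ isomorphic to the core of $K$. This reduces the theorem to the statement that $K$ is strong collapsible iff its core is a single point.

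For the forward direction, suppose $K$ is strong collapsible, so $K\scp \{v\}$ for some vertex $v$. A single vertex complex is vacuously minimal (it has no vertex that could be dominated), so $\{v\}$ is itself a core of $K$. By the uniqueness of cores up to isomorphism (Theorem on cores), every core of $K$ is a single point. Applying the previous proposition, there exists $n\ge 1$ with $\nerve^n(K)$ isomorphic to the core of $K$, hence $\nerve^n(K)$ is a point.

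For the converse, suppose $\nerve^n(K)$ is a point for some $n$. Since the nerve of a single-vertex complex is again a single-vertex complex, we have $\nerve^m(K)$ is a point for all $m\ge n$, and in particular for some even $m=2k\ge n$. Now I would iterate Proposition \ref{aprop11}: applying it to $K$ yields $K\scp L_1$ with $L_1\cong \nerve^2(K)$, and then applying it to (a complex isomorphic to) $\nerve^2(K)$ gives $L_1 \scp L_2$ with $L_2\cong \nerve^4(K)$, and so on. After $k$ iterations one obtains $K\scp L_k$ with $L_k\cong \nerve^{2k}(K)$, a point. Hence $K$ is strong collapsible.

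The only minor care needed is in the iteration step of the converse: Proposition \ref{aprop11} is stated for $K$ itself but we need to apply it along an intermediate subcomplex isomorphic to $\nerve^2(K)$. Since isomorphic complexes have identical strong collapse behaviour (any isomorphism transports the sequence of elementary strong collapses), this transfer is routine and is the only potential obstacle, though really a harmless bookkeeping point. No separate treatment of the base case $n=1$ is required since $\nerve^1(K)$ being a point forces $K$ itself to be a simplex (its unique maximal simplex), which is trivially strong collapsible.
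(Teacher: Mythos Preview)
Your proof is correct and follows essentially the same route as the paper: the forward direction invokes the preceding proposition to identify $\nerve^n(K)$ with the (one-point) core, and the converse passes to an even iterate and applies Proposition~\ref{aprop11}. You simply spell out the iteration of Proposition~\ref{aprop11} and the isomorphism bookkeeping that the paper leaves implicit in its one-line appeal to that proposition.
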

\begin{proof}
If $K$ is strong collapsible, its core is a point and then, there exists $n$ such that $\nerve^n(K)=*$ by the previous proposition. If $\nerve^n(K)=*$ for some $n$, then $\nerve^{n+1}(K)$ is also a point. Therefore there exists an even positive integer $r$ such that $\nerve^r(K)=*$, and $K\scp *$ by Proposition \ref{aprop11}.
\end{proof}

%

\section{Finite topological spaces}

The theory of strong homotopy types was motivated by the homotopy theory of finite spaces developed by R.E. Stong in \cite{Sto}. In this section we recall some basic concepts and results on finite spaces and we establish a correspondence between strong homotopy types of finite simplicial complexes and homotopy types of finite spaces. For a comprehensive treatment of this subject we refer the reader to \cite{Bar6, Bar2, Bar3, May, May2, Mcc, Sto}.

In general, a complex $K$ and its barycentric subdivision $K'$ do not have the same strong homotopy type. We will use the relationship between strong homotopy types of complexes and finite spaces to prove that $K$ is strong collapsible if and only if $K'$ is strong collapsible. Finally, we prove an analogous result for finite spaces.

\bigskip

Given a finite topological space $X$, for each $x\in X$ we denote by $U_x$ the minimal open set containing $x$. The relation $x\le y$ if $x\in U_y$ defines a preorder on the set $X$, that is to say a reflexive and transitive relation. If $X$ is in addition a $T_0$-space (i.e. for every pair of points there exists an open set which contains one and only one of them), $\le$ is antisymmetric. This establishes a correspondence between finite $T_0$-spaces and finite posets. Moreover, continuous maps correspond to order-preserving maps.

\bigskip
\bigskip

\begin{figure}[h]
\qquad \qquad
\begin{minipage}{7cm}
\begin{center}
\includegraphics[scale=0.7]{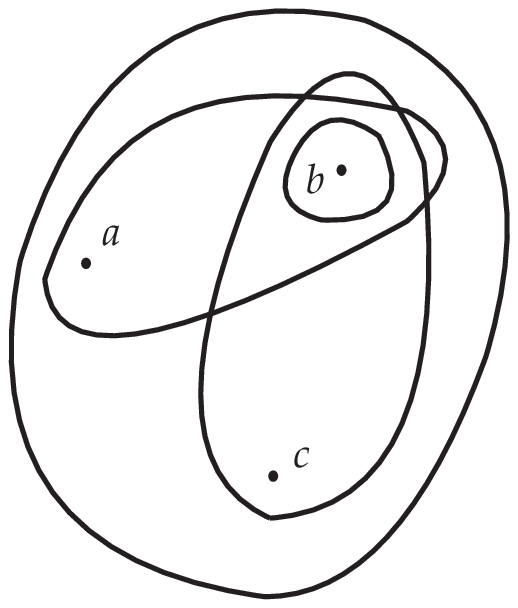}
\end{center}
\end{minipage}
\begin{minipage}{5cm}
\includegraphics[scale=0.8]{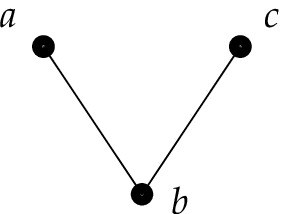}
\end{minipage}
\begin{center} 
\caption{A finite $T_0$-space of three points and the}

Hasse diagram of the corresponding poset. \end{center}
\end{figure}


Given two finite $T_0$-spaces $X$, $Y$, we denote by $Y^X$ the space of maps with the compact-open topology. This finite $T_0$-space corresponds to the set of order-preserving maps $X\to Y$ with the pointwise ordering, i.e. $f\le g$ if $f(x)\le g(x)$ for every $x\in X$. The following result characterizes homotopic maps between finite spaces in terms of posets.

\begin{prop} \label{homotopias}
Let $f,g: X\to Y$ be two maps between finite spaces. Then $f\simeq g$ if and only if there is a fence $f=f_0\le f_1\ge f_2\le \ldots f_n=g$. Moreover, if $A\subseteq X$, then $f\simeq g \ rel \ A$ if and only if there exists a fence $f=f_0\le f_1\ge f_2\le \ldots f_n=g$ such that $f_i|_{A}=f|_{A}$ for every $0\le i\le n$.  
\end{prop}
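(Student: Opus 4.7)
The plan is to reduce the proposition to a single step: if $f \le g$ in the pointwise order on $Y^X$, then $f$ and $g$ are homotopic (relatively to any $A$ on which they agree). Chaining such homotopies along a fence yields the backward implication, while a compactness argument on the adjoint path produces a fence from any given homotopy.

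For the single-step case I would define $H \colon X \times I \to Y$ by $H(x,t) = f(x)$ for $0 \le t < 1$ and $H(x,1) = g(x)$. To verify continuity, take $U \subseteq Y$ open. Since open sets in a finite $T_0$-space are down-closed (because $y \in U$ forces $U_y \subseteq U$), and since $f(x) \le g(x)$ for every $x$, one obtains $g^{-1}(U) \subseteq f^{-1}(U)$. Therefore
\[
H^{-1}(U) = \bigl(f^{-1}(U) \times [0,1)\bigr) \cup \bigl(g^{-1}(U) \times \{1\}\bigr),
\]
which is open in $X \times I$: about a point $(x,1) \in H^{-1}(U)$, the set $g^{-1}(U) \times (1/2, 1]$ is a neighborhood contained in $H^{-1}(U)$ thanks to the preceding inclusion. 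Concatenating such step homotopies along a fence $f = f_0 \le f_1 \ge f_2 \le \cdots \le f_n = g$ produces a homotopy $f \simeq g$.

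For the converse, let $H \colon X \times I \to Y$ be a homotopy from $f$ to $g$ and pass to the adjoint $\widetilde{H} \colon I \to Y^X$, which is continuous into a finite $T_0$-space. For each $s \in I$, the preimage $\widetilde{H}^{-1}(U_{\widetilde{H}(s)})$ is an open neighborhood of $s$ on which $\widetilde{H}(t) \le \widetilde{H}(s)$. By compactness of $I$, finitely many such intervals, at points $s_1 < s_2 < \cdots < s_n$, cover $I$; picking an overlap point $t_i$ in $\widetilde{H}^{-1}(U_{\widetilde{H}(s_i)}) \cap \widetilde{H}^{-1}(U_{\widetilde{H}(s_{i+1})})$ yields a fence
\[
f = \widetilde{H}(0) \le \widetilde{H}(s_1) \ge \widetilde{H}(t_1) \le \widetilde{H}(s_2) \ge \cdots \le \widetilde{H}(s_n) \ge \widetilde{H}(1) = g
\]
whose entries are continuous maps $X \to Y$, establishing the fence characterization.

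For the relative statement, both constructions respect the subspace $A$: the step homotopy satisfies $H(a,t) \in \{f(a), g(a)\}$, which collapses to $f(a)$ whenever $f|_A = g|_A$, so the concatenated homotopy is rel $A$; conversely, if $H$ is rel $A$, every $\widetilde{H}(s)$ restricts to $f|_A$ on $A$, and the same holds for every entry of the extracted fence. The main obstacle is the continuity check for the single-step homotopy $H$, whose definition is deliberately asymmetric in $t$ and whose openness depends crucially on the down-closed character of open sets in finite $T_0$-spaces rather than on a purely order-theoretic argument.
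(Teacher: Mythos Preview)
The paper states this proposition without proof, treating it as a standard characterization from Stong's theory of finite spaces. Your argument is correct and follows exactly the classical route: the exponential law identifies homotopies $X \times I \to Y$ with paths $I \to Y^X$, and in a finite space path-components coincide with order-components (fences). Your explicit step-homotopy for the case $f \le g$ is the usual one, and the continuity verification via down-closedness of open sets is right; the relative version is handled correctly as well.

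One small informality in the forward direction: the preimages $\widetilde{H}^{-1}(U_{\widetilde{H}(s)})$ are open in $I$ but need not be intervals, so after extracting a finite subcover indexed by $s_1 < \cdots < s_n$ it is not automatic that $\widetilde{H}^{-1}(U_{\widetilde{H}(s_i)}) \cap \widetilde{H}^{-1}(U_{\widetilde{H}(s_{i+1})})$ is nonempty, nor that $0$ lies in the first member and $1$ in the last. The standard patch is to take a Lebesgue number $\delta$ for the cover and a partition $0 = r_0 < r_1 < \cdots < r_m = 1$ of mesh less than $\delta$; then each $[r_{j-1}, r_j]$ lies in some $\widetilde{H}^{-1}(U_{\widetilde{H}(s(j))})$, and the fence
\[
\widetilde{H}(r_0) \le \widetilde{H}(s(1)) \ge \widetilde{H}(r_1) \le \widetilde{H}(s(2)) \ge \cdots \ge \widetilde{H}(r_m)
\]
goes through, since each $r_j$ belongs to both $\widetilde{H}^{-1}(U_{\widetilde{H}(s(j))})$ and $\widetilde{H}^{-1}(U_{\widetilde{H}(s(j+1))})$. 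With this adjustment your proof is complete.
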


If $x$ is a point in a finite $T_0$-space $X$, $F_x$ denotes the set of points which are greater than or equal to $x$ or, in other words, the closure of $\{x\}$. $\hat{U}_x=U_x\smallsetminus \{x\}$ is the set of points which are strictly smaller than $x$ and $\hat{F}_x=F_x\smallsetminus \{x\}$. The \textit{link} $\hat{C}_x$ of a point $x$ in a finite $T_0$-space $X$ is the subspace $\hat{U}_x\cup \hat{F}_x$ of points comparable with $x$ and distinct from $x$. Finally, we denote $C_x=U_x\cup F_x$ the set of points comparable with $x$.

\begin{defi}
A point $x\in X$ is a \textit{beat point} if $\hat{U}_x$ has a maximum or if $\hat{F}_x$ has a minimum. In the first case, $x$ is called a \textit{down beat point} and in the second, an \textit{up beat point}. In other words, $x$ is a beat point if it \textit{covers} exactly one element or if it is covered by exactly one element. 
\end{defi}

\begin{defi}
If $x$ is a beat point of a finite $T_0$-space $X$, we say that there is an \textit{elementary strong collapse} from $X$ to $X\smallsetminus \{x\}$. There is a \textit{strong collapse} from $X$ to a subspace $Y$ (or a \textit{strong expansion} from $Y$ to $X$) if there exists a sequence of elementary strong collapses starting in $X$ and finishing in $Y$. We denote this situation by $X\scp Y$ or $Y\sep X$.
\end{defi}

It is not hard to prove that if $x\in X$ is a beat point, then $X\smallsetminus \{x\}$ is a strong deformation retract of $X$. Therefore, strong expansions of finite spaces are strong deformation retracts.

A finite $T_0$-space is said to be a \textit{minimal finite space} if it has no beat points. A \textit{core} of a finite space $X$ is a strong deformation retract of $X$ which is a minimal finite space. The following results are due to Stong \cite{Sto}.

\begin{teo}[Stong] \label{teostong} 
Let $X$ be a minimal finite space. A map $f:X\to X$ is homotopic to the identity if and only if $f=1_X$.
\end{teo}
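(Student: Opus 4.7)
The ``if'' direction is immediate: any map is homotopic to itself. For the nontrivial direction, the plan is to reduce the problem, via the fence characterization of homotopy (Proposition \ref{homotopias}), to a single key claim about order-preserving maps that lie pointwise above or below the identity.

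\textbf{Key Lemma I would prove first.} \emph{If $X$ is a minimal finite $T_0$-space and $g:X\to X$ is a continuous map with $g\le 1_X$, then $g=1_X$; analogously if $g\ge 1_X$.} To prove this, suppose for contradiction that the set $A=\{x\in X : g(x)\ne x\}$ is nonempty, and choose $x\in A$ minimal with respect to the order of $X$. By minimality of $x$ in $A$, every $y\in \hat{U}_x$ satisfies $g(y)=y$. Since $g(x)\le x$ and $g(x)\ne x$, we have $g(x)\in \hat{U}_x$. Now for any $y\in \hat{U}_x$ we have $y<x$, hence by order-preservation $y=g(y)\le g(x)$. Thus $g(x)$ is a maximum of $\hat{U}_x$, which means $x$ is a down beat point of $X$, contradicting the minimality of $X$. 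The case $g\ge 1_X$ is dual: take $x$ minimal in $\{x : g(x)\ne x\}$, observe $g(x)>x$, and for any $y\in \hat{F}_x$ deduce $g(x)\le g(y)=y$, so $g(x)$ is a minimum of $\hat{F}_x$ and $x$ is an up beat point.

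\textbf{Finishing the theorem.} Assume $f\simeq 1_X$. By Proposition \ref{homotopias} there is a fence
\[
1_X = f_0 \le f_1 \ge f_2 \le f_3 \ge \cdots f_n = f
\]
of continuous maps $X\to X$. From $f_1\ge f_0 = 1_X$ the Key Lemma gives $f_1=1_X$. Then $f_2\le f_1=1_X$ gives $f_2=1_X$, and inductively $f_i=1_X$ for every $i$. In particular $f=f_n=1_X$.

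\textbf{Expected main obstacle.} The substantive content is entirely in the Key Lemma, and more precisely in the observation that if $g\le 1_X$ fails to be the identity then a minimal point of disagreement forces the emergence of a beat point. The care needed is twofold: picking $x$ minimal (not maximal) in the disagreement set so that the induction hypothesis on strictly smaller elements applies, and then using order-preservation correctly to transfer the inequality $y<x$ into $y\le g(x)$. Once this is set up, the rest of the argument is a routine induction along the fence.
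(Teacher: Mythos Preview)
Your approach is essentially the same as the paper's: the paper proves the more general Proposition~\ref{stongteopair} (for minimal pairs) by exactly this induction---show that any $f\le 1_X$ must be the identity by arguing that at a least point of disagreement a down beat point would appear---and then observes that Theorem~\ref{teostong} is the special case $A=\emptyset$; the reduction to $f\le 1_X$ or $f\ge 1_X$ via the fence from Proposition~\ref{homotopias} is identical.

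There is one slip in your dual case. When $g\ge 1_X$ you again choose $x$ \emph{minimal} in the disagreement set and then assert $g(y)=y$ for $y\in\hat{F}_x$; but minimality of $x$ only gives $g(y)=y$ for $y<x$, not for $y>x$. The correct dualization is to take $x$ \emph{maximal} in $\{x:g(x)\ne x\}$: then every $y>x$ satisfies $g(y)=y$, and from $x<y$ you get $g(x)\le g(y)=y$, so $g(x)$ is the minimum of $\hat{F}_x$ and $x$ is an up beat point. With that correction the argument is complete and matches the paper's.
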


Every finite space has a core. Moreover we have the following

\begin{coro} [Stong] \label{corostong}
A homotopy equivalence between minimal finite spaces is a homeomorphism. In particular the core of a finite space is unique up to homeomorphism and two finite spaces are homotopy equivalent if and only if they have homeomorphic cores.
\end{coro}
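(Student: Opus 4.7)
The plan is to deduce everything from Stong's Theorem \ref{teostong} (the previous theorem in the excerpt), which is the real content; the corollary is then essentially formal.

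First I would prove the main statement: let $f : X \to Y$ be a homotopy equivalence between minimal finite $T_0$-spaces with homotopy inverse $g : Y \to X$. Then $gf \simeq 1_X$ and $fg \simeq 1_Y$. Since $X$ is minimal, Theorem \ref{teostong} forces $gf = 1_X$, and likewise $fg = 1_Y$ since $Y$ is minimal. Hence $f$ is a bijection with continuous inverse $g$, i.e.\ a homeomorphism. (No further argument is needed: continuity of both $f$ and $g$ comes with the hypothesis, and finite spaces need no extra care.)

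Next, for the uniqueness of cores up to homeomorphism, let $X_0, X_1 \subseteq X$ be two cores of a finite space $X$. By definition each $X_i$ is a strong deformation retract of $X$, so the inclusions $X_i \hookrightarrow X$ are homotopy equivalences; composing one with a homotopy inverse of the other gives a homotopy equivalence $X_0 \to X_1$. Both are minimal finite spaces, so by the first part this map is a homeomorphism.

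Finally, for the ``if and only if'' characterization of homotopy equivalence: existence of cores (stated just before the corollary as ``Every finite space has a core'') lets us pick cores $X_0 \subseteq X$ and $Y_0 \subseteq Y$. If $X \simeq Y$, composing the deformation retractions $X \to X_0$ and $Y \to Y_0$ with a homotopy equivalence $X \simeq Y$ yields a homotopy equivalence $X_0 \simeq Y_0$ between minimal spaces, which is a homeomorphism by the first part. Conversely, any homeomorphism $X_0 \cong Y_0$ is in particular a homotopy equivalence, and chaining with the homotopy equivalences $X \simeq X_0$ and $Y_0 \simeq Y$ gives $X \simeq Y$.

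The only step that requires care is invoking Theorem \ref{teostong} correctly: one must check that $gf$ (respectively $fg$) is really homotopic to the identity of a minimal space, which is immediate from the definition of homotopy inverse. There is no genuine obstacle here; the work was done in Theorem \ref{teostong}, and the corollary is a clean bookkeeping argument analogous to Corollary \ref{seminimal} in the simplicial setting.
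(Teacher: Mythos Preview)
Your argument is correct and is precisely the standard derivation of this corollary from Theorem~\ref{teostong}; the paper itself states Corollary~\ref{corostong} without proof, attributing it to Stong, so your write-up fills in exactly the intended gap. Nothing is missing or misapplied.
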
 

Note that the core of a finite space $X$ is the smallest space homotopy equivalent to $X$. Thus, $X$ is contractible if and only if $X\scp *$.

From Corollary \ref{corostong} one deduces that two finite $T_0$-spaces are homotopy equivalent if and only if one of them can be obtained from the other just by removing and adding beat points. Thus, the notion of strong homotopy types of finite spaces that would follow from the notion of strong collapse coincides with the usual notion of homotopy types.

These results motivated the definition of strong homotopy types of simplicial complexes and the results proved in Section \ref{sectionsht}. The relationship between both concepts will be given in Theorems \ref{stronght} and \ref{mainstrong}.

We exhibit here a simple generalization of Stong's results for finite topological pairs which uses ideas very similar to the original. As a consequence we prove that strong expansions of finite spaces coincide with strong deformation retracts.

\begin{defi}
Let $(X,A)$ be a pair of finite $T_0$-spaces, i.e. $A$ is a subspace of a finite $T_0$-space $X$. We say that $(X,A)$ is a \textit{minimal pair} if all the beat points of $X$ lie in $A$.
\end{defi}

\begin{prop} \label{stongteopair}
Let $(X,A)$ be a minimal pair and let $f:X\to X$ be a map such that $f\simeq 1_X \textrm{ rel } A$. Then $f=1_X$.
\end{prop}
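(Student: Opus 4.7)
The plan is to imitate Stong's original argument for \ref{teostong}, combining it with the relative version of \ref{homotopias} to keep track of the condition $f\simeq 1_X \text{ rel } A$.

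First I would invoke Proposition \ref{homotopias} to get a fence $1_X=f_0 \le f_1 \ge f_2 \le \cdots f_n = f$ of order-preserving self-maps of $X$ with $f_i|_A = 1_A$ for every $i$. Since homotopy rel $A$ is transitive, it suffices to show that two comparable maps in such a fence are actually equal, i.e.\ to prove the following claim:

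\emph{Claim.} If $g,h:X\to X$ are order-preserving, $g\le h$, and $g|_A=h|_A$, with moreover $h=1_X$ (or $g=1_X$), and $(X,A)$ is a minimal pair, then $g=h$.

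Then the main case is: suppose $g:X\to X$ satisfies $g\le 1_X$ and $g|_A = 1_A$; show $g=1_X$. I would choose $x\in X$ minimal (with respect to the order on $X$) among points with $g(x)\ne x$, and aim to show $x$ must be a down beat point. Since $g(x)\le x$ and $g(x)\ne x$, we have $g(x)\in\hat U_x$. For any $y<x$, minimality of $x$ gives $g(y)=y$; combined with $y<x$ and the fact that $g$ is order-preserving, this yields $y=g(y)\le g(x)$. Thus $g(x)$ is an upper bound of $\hat U_x$ lying in $\hat U_x$, i.e.\ the maximum of $\hat U_x$, so $x$ is a down beat point of $X$. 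By the minimal pair hypothesis $x\in A$, and then $g(x)=x$, a contradiction. The symmetric case $g\ge 1_X$ is identical using a maximal choice and $\hat F_x$, producing an up beat point.

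Applying this claim step-by-step along the fence $f_0,f_1,\ldots,f_n$, each pair $f_i,f_{i+1}$ satisfies the hypothesis (one of them is handled directly if one end is $1_X$; for the intermediate steps one compares consecutive maps using the fact that they agree on $A$ and apply the same beat-point argument to the auxiliary map obtained by precomposing with the identity — concretely, once $f_0=f_1=\cdots=f_i=1_X$ is established, the next comparison $f_i=1_X$ with $f_{i+1}$ falls under the claim). Hence $f_n=f=1_X$.

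The only delicate point is bookkeeping along the fence: one must be careful that at every step the map being compared with $1_X$ is order-related to $1_X$ (and not just to the previous $f_i$), which is handled by propagating the equality $f_i=1_X$ down the fence. The real content is the beat-point argument, which is exactly Stong's, together with the observation that the rel-$A$ hypothesis forces the forced beat point into $A$ — this is the only place the "minimal pair" definition enters, and it is what replaces the unqualified minimality used in \ref{teostong}.
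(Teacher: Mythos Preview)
Your proposal is correct and follows essentially the same approach as the paper: reduce via Proposition \ref{homotopias} to the case of a single comparison $g\le 1_X$ (or $g\ge 1_X$) with $g|_A=1_A$, then use the beat-point argument to force $g=1_X$, propagating equality along the fence. The only cosmetic difference is that the paper phrases the core step as a direct induction over the poset (showing $f|_{\hat U_x}=1$ implies $f(x)=x$), while you phrase it as a minimal-counterexample argument; the content is identical.
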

\begin{proof}
Suppose that $f\le 1_X$ and $f|_A=1_A$. Let $x\in X$. If $x\in X$ is minimal, $f(x)= x$. In general, suppose we have proved that $f|_{\hat{U}_x}=1|_{\hat{U}_x}$. If $x\in A$, $f(x)=x$. If $x\notin A$, $x$ is not a down beat point of $X$. However $y<x$ implies $y=f(y)\le f(x)\le x$. Therefore $f(x)=x$. The case $f\ge 1_X$ is similar, and the general case follows from \ref{homotopias}.
\end{proof}

Note that Theorem \ref{teostong} follows from Proposition \ref{stongteopair} taking $A=\emptyset$.

\begin{coro} \label{minpair}
Let $(X,A)$ and $(Y,B)$ be minimal pairs and let $f:X\to Y$, $g:Y\to X$ be such that $gf\simeq 1_X \textrm{ rel } A$, $gf\simeq 1_Y \textrm{ rel } B$. Then $f$ and $g$ are homeomorphisms.
\end{coro}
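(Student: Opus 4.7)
The proof is essentially an immediate application of Proposition \ref{stongteopair} applied twice, once to each composite. My plan is as follows.

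First I would consider the composite $gf:X\to X$. By hypothesis $gf\simeq 1_X \textrm{ rel } A$, and since $(X,A)$ is a minimal pair, Proposition \ref{stongteopair} applies directly and yields $gf=1_X$. Symmetrically, the composite $fg:Y\to Y$ is homotopic to $1_Y \textrm{ rel } B$ (here I am reading the second displayed hypothesis as $fg\simeq 1_Y \textrm{ rel } B$, which is clearly what is intended), and since $(Y,B)$ is also a minimal pair, Proposition \ref{stongteopair} gives $fg=1_Y$.

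From $gf=1_X$ and $fg=1_Y$ we conclude that $f$ and $g$ are mutually inverse bijections. Since both $f$ and $g$ are continuous by hypothesis, they are homeomorphisms.

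There is essentially no obstacle here; the only subtlety is recognizing that Proposition \ref{stongteopair} has been stated with enough generality (homotopy \emph{rel} $A$, not merely free homotopy) to apply verbatim to each composite without any further work, and that the minimality of the pairs $(X,A)$ and $(Y,B)$ is used to rule out any nontrivial self-map of $X$ or $Y$ homotopic to the identity relative to the prescribed subspace.
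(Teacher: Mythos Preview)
Your proposal is correct and is exactly the intended argument: the paper states this as an immediate corollary of Proposition~\ref{stongteopair} with no separate proof, and your two applications of that proposition (one to $gf$, one to $fg$, reading the second hypothesis as $fg\simeq 1_Y \textrm{ rel } B$) are precisely what is meant.
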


\begin{coro} \label{rdf}
Let $X$ be a finite $T_0$-space and let $A\subseteq X$. Then, $X\scp A$ if and only if $A$ is a strong deformation retract of $X$.
\end{coro}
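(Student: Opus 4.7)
The forward direction is essentially immediate: as already noted in the excerpt, removing a single beat point yields a strong deformation retract, and strong deformation retracts compose, so iterating along the sequence of elementary strong collapses in $X\scp A$ realises $A$ as a strong deformation retract of $X$.

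For the converse, my plan is to reduce $X$ by removing, one at a time, any beat point of $X$ that does not lie in $A$. Since $X$ is finite, this process terminates in a strong collapse $X\scp X_0$ with $A\subseteq X_0$ and $(X_0,A)$ a minimal pair in the sense of the previous definition. It then suffices to show that $X_0=A$, and I will obtain this from Corollary~\ref{minpair}.

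To set up that application, I combine the hypothesised retraction $r\colon X\to A$ (with $ri=1_A$ and $ir\simeq 1_X \textrm{ rel } A$, where $i\colon A\hookrightarrow X$) with the retraction $r_0\colon X\to X_0$ coming from the forward direction applied to $X\scp X_0$ (so $i_0 r_0\simeq 1_X \textrm{ rel } X_0$, where $i_0\colon X_0\hookrightarrow X$). Writing $i=i_0 j$ for the inclusion $j\colon A\hookrightarrow X_0$, put $g:=r i_0\colon X_0\to A$. Then $gj=ri=1_A$, and composing the relative homotopy $ir\simeq 1_X \textrm{ rel } A$ with $r_0$ on the left and $i_0$ on the right yields a homotopy $1_{X_0}=r_0 i_0 \simeq r_0 i r i_0 = jg$ on $X_0$, which is rel $A$ because $A\subseteq X_0$ and the original homotopy fixes $A$. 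Since $(A,A)$ and $(X_0,A)$ are both minimal pairs, Corollary~\ref{minpair} applied to $j$ and $g$ forces $j$ to be a homeomorphism; being an inclusion, $j$ must satisfy $X_0=A$, and hence $X\scp A$.

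The one step deserving care is the relative-homotopy bookkeeping in the construction of the homotopy $1_{X_0}\simeq jg$ rel $A$: I need to verify that the homotopy lands inside $X_0$ (which follows since $r_0$ targets $X_0$) and remains rel $A$ (which follows from $A\subseteq X_0$ together with the fact that the original homotopy fixes $A$ pointwise). Everything else reduces to the already-established forward direction and to a direct invocation of Corollary~\ref{minpair}.
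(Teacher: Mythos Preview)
Your proof is correct and follows essentially the same approach as the paper's: collapse off beat points outside $A$ to reach a minimal pair $(X_0,A)$, then invoke Corollary~\ref{minpair} with the pairs $(A,A)$ and $(X_0,A)$ to conclude $X_0=A$. The only difference is cosmetic: the paper simply asserts that ``$A$ and $Y$ are strong deformation retracts of $X$'' and that this places one in the hypotheses of Corollary~\ref{minpair}, whereas you explicitly construct the maps $j$ and $g=r i_0$ and verify the relative homotopy $jg\simeq 1_{X_0}$ rel $A$ by composing with $r_0$ and $i_0$.
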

\begin{proof}
We only have to prove that if $A\subseteq X$ is a strong deformation retract, then $X\scp A$. To this end, perform arbitrary elementary strong collapses removing beat points which are not in $A$. Suppose $X\scp Y\supseteq A$ and that all the beat points of $Y$ lie in $A$. Then $(Y, A)$ is a minimal pair. Since $A$ and $Y$ are strong deformation retracts of $X$, the minimal pairs $(A,A)$ and $(Y,A)$ satisfy the hypothesis of Corollary \ref{minpair}. Therefore $A$ and $Y$ are homeomorphic and so, $X\scp Y=A$.
\end{proof}

It follows that $A\subseteq X$ is a core of $X$ if and only if it is a minimal finite space such that $X\scp A$.

\bigskip

The relationship between finite spaces and simplicial complexes is the following. Given a finite simplicial complex $K$, we define the \textit{associated finite space} (\textit{face poset}) $\x (K)$ as the poset of simplices of $K$ ordered by inclusion. McCord proved that $|K|$ and $\x(K)$ have the same weak homotopy type \cite{Mcc}. In particular, they have the same homotopy and homology groups, however in general they are not homotopy equivalent. Conversely, if $X$ is a finite $T_0$-space, the \textit{associated simplicial complex} (\textit{order complex}) $\kp (X)$ has as simplices the non-empty chains of $X$ and it also has the same weak homotopy type as $X$. These constructions are functorial: a simplicial map $\varphi :K\to L$ induces an order preserving map $\x (\varphi) :\x (K) \to \x (L)$ given by $\x (\varphi) (\sigma)=\varphi (\sigma)$ and a continuous map $f:X\to Y$ between finite $T_0$-spaces induces a simplicial map $\kp (f): \kp (X)\to \kp (Y)$ defined by $\kp (f)(x)=f(x)$.

\begin{lema}
Let $f,g: X\to Y$ be two homotopic maps between finite $T_0$-spaces. Then there exists a sequence $f=f_0,f_1, \ldots, f_n=g$ such that for every $0\le i< n$ there is a point $x_i\in X$ with the following properties: 

1. $f_i$ and $f_{i+1}$ coincide in $X\smallsetminus \{x_i\}$, and 

2. $f_i(x_i)$ covers or is covered by $f_{i+1}(x_{i})$. 
\end{lema}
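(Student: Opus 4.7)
The plan is to first use Proposition \ref{homotopias} to reduce to the monotone case. By that proposition, $f\simeq g$ means there is a fence $f=g_0\le g_1\ge g_2\le\cdots= g$. If I can prove the statement whenever $f\le g$ (and, symmetrically, whenever $f\ge g$), then concatenating the resulting sequences for each consecutive pair in the fence yields the sequence asked for. So I may assume $f\le g$.

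Assuming $f\le g$ and $f\ne g$, the idea is to move $f$ up to $g$ one covering relation at a time, changing the value at a single well-chosen point. Let $S=\{y\in X : f(y)\ne g(y)\}$ and pick $x$ to be a maximal element of $S$ with respect to the order on $X$. Since $Y$ is finite and $f(x)<g(x)$, any maximal chain in $Y$ from $f(x)$ to $g(x)$ provides a point $z\in Y$ that covers $f(x)$ and satisfies $z\le g(x)$. Define $f_1\colon X\to Y$ by $f_1(x)=z$ and $f_1(y)=f(y)$ for $y\ne x$. Then conditions (1) and (2) hold by construction for the step from $f=f_0$ to $f_1$.

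The main verification is that $f_1$ is order-preserving. For $y<x$, we get $f_1(y)=f(y)\le f(x)<z=f_1(x)$. For $y>x$, the maximality of $x$ in $S$ forces $f(y)=g(y)$, so $f_1(x)=z\le g(x)\le g(y)=f(y)=f_1(y)$. (This is the only delicate point in the argument: without the maximality choice, we could have $f(y)<g(y)$ above $x$ and no control over whether $z\le f(y)$.) Moreover, $f\le f_1\le g$ since $f_1(x)=z\le g(x)$ and $f_1(y)=f(y)\le g(y)$ elsewhere.

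Finally, I iterate. The quantity $N(f,g)=\sum_{y\in X} d_Y(f(y),g(y))$, where $d_Y$ is the length of a longest chain from $f(y)$ to $g(y)$ in $Y$, drops strictly from $(f,g)$ to $(f_1,g)$ because at $x$ we moved up a single covering step, and the values elsewhere were unchanged. By induction on $N$, there is a sequence $f_1,f_2,\ldots,f_n=g$ satisfying (1) and (2); prepending the elementary step $f=f_0\to f_1$ completes the proof in the monotone case, and the fence reduction described above finishes the general case.
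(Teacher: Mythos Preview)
Your proof is correct and follows essentially the same approach as the paper's: reduce to $f\le g$ via the fence of Proposition \ref{homotopias}, pick a maximal point $x$ of the set where $f$ and $g$ differ, push $f(x)$ up one covering step toward $g(x)$, and iterate. Your write-up is in fact slightly more explicit than the paper's---you spell out both cases $y<x$ and $y>x$ in the order-preservation check and give a concrete well-founded measure $N(f,g)$ for termination, whereas the paper simply appeals to finiteness of $X$ and $Y$.
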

\begin{proof}
Without loss of generality, we may assume that $f=f_0\le g$ by Proposition \ref{homotopias}. Let $A=\{x\in X \ | \ f(x)\neq g(x)\}$. If $A=\emptyset$, $f=g$ and there is nothing to prove. Suppose $A\neq \emptyset$ and let $x=x_0$ be a maximal point of $A$. Let $y\in Y$ be such that $y$ covers $f(x)$ and $y\le g(x)$ and define $f_1:X \to Y$ by $f_1|_{X\smallsetminus \{x\}}=f|_{X\smallsetminus \{x\}}$ and $f_1(x)=y$. Then $f_1$ is continuous for if $x'>x$, $x'\notin A$ and therefore $$f_1(x')=f(x')=g(x')\ge g(x)\ge y=f_1(x).$$

Repeating this construction for $f_i$ and $g$, we define $f_{i+1}$. By finiteness of $X$ and $Y$ this process ends.
\end{proof}

\begin{prop} \label{homotcontig}
Let $f,g: X\to Y$ be two homotopic maps between finite $T_0$-spaces. Then the simplicial maps $\kp(f),\kp(g) :\kp (X)\to \kp(Y)$ lie in the same contiguity class.
\end{prop}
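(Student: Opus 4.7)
The plan is to use the preceding lemma to reduce to the case of homotopies that differ at one point by a single covering relation, and then to verify contiguity directly on simplices.

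First I would invoke the preceding lemma to obtain a sequence $f=f_0,f_1,\ldots,f_n=g$ where $f_i$ and $f_{i+1}$ agree off a single point $x_i\in X$, and $f_i(x_i)$ and $f_{i+1}(x_i)$ are related by a covering. Since the contiguity relation is an equivalence relation (and generates the notion of contiguity class), it suffices to prove the statement for two maps $f,g:X\to Y$ that agree on $X\smallsetminus\{x\}$ and satisfy $f(x)\prec g(x)$ (or $g(x)\prec f(x)$) for some distinguished $x\in X$. In particular, $f(x)$ and $g(x)$ are comparable in $Y$.

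Next I would show directly that under these hypotheses $\kp(f)$ and $\kp(g)$ are contiguous. Take an arbitrary simplex $\sigma=\{x_0<x_1<\cdots<x_k\}$ of $\kp(X)$, i.e. a nonempty chain in $X$. If $x\notin\sigma$, then $\kp(f)(\sigma)=\kp(g)(\sigma)$ is already a simplex, so the union is a simplex of $\kp(Y)$. Otherwise $x=x_j$ for some $j$, and
\[
\kp(f)(\sigma)\cup\kp(g)(\sigma)=\{f(x_0),\ldots,f(x_{j-1}),f(x_j),g(x_j),f(x_{j+1}),\ldots,f(x_k)\},
\]
using that $f$ and $g$ coincide off $x$. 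Since $f$ and $g$ are order-preserving, the subchains $f(x_0)\le\cdots\le f(x_k)$ and $f(x_0)\le\cdots\le f(x_{j-1})\le g(x_j)\le f(x_{j+1})\le\cdots\le f(x_k)$ are each totally ordered, and by assumption $f(x_j)$ and $g(x_j)$ are comparable. Hence the whole set is totally ordered in $Y$, i.e. a chain, and thus a simplex of $\kp(Y)$. This proves $\kp(f)$ and $\kp(g)$ are contiguous.

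The argument is essentially routine once the preceding lemma is in hand; there is no genuine obstacle. The only point requiring a little care is the bookkeeping showing that inserting the single extra vertex $g(x_j)$ into the image chain $\kp(f)(\sigma)$ preserves total orderedness, which follows immediately from the comparability of $f(x_j)$ and $g(x_j)$ together with the order-preservation of $f$ and $g$ on $\sigma\smallsetminus\{x\}$.
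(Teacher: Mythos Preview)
Your proof is correct and follows essentially the same approach as the paper: reduce via the preceding lemma to the case where $f$ and $g$ differ at a single point $x$ with $f(x)$ and $g(x)$ comparable, and then verify directly that $f(C)\cup g(C)$ is a chain for every chain $C$ in $X$. The paper's argument is terser but identical in substance.
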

\begin{proof}
By the previous lemma, we can assume that there exists $x\in X$ such that $f(y)=g(y)$ for every $y\neq x$ and $f(x)$ is covered by $g(x)$. Therefore, if $C$ is a chain in $X$, $f(C)\cup g(C)$ is a chain in $Y$. In other words, if $\sigma \in \kp (X)$ is a simplex, $\kp (f)(\sigma)\cup \kp (g)(\sigma)$ is a simplex in $\kp (Y)$.
\end{proof}

\begin{prop} \label{lema1p}
Let $\varphi,\psi:K\to L$ be simplicial maps which lie in the same contiguity class. Then $\x (\varphi)\simeq \x (\psi)$.
\end{prop}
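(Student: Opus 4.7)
The plan is to reduce the general claim to the case of two contiguous maps and then exhibit an explicit fence of order-preserving maps between $\x(\varphi)$ and $\x(\psi)$, from which homotopy follows by Proposition \ref{homotopias}.

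First I would argue by transitivity: since $\varphi \sim \psi$ means there is a finite sequence of simplicial maps $\varphi=\varphi_0,\varphi_1,\ldots,\varphi_n=\psi$ with $\varphi_i$ contiguous to $\varphi_{i+1}$, and since homotopy of continuous maps between finite spaces is an equivalence relation, it suffices to handle the case when $\varphi$ and $\psi$ are already contiguous.

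So assume $\varphi,\psi:K\to L$ are contiguous, meaning $\varphi(\sigma)\cup\psi(\sigma)$ is a simplex of $L$ for every $\sigma\in K$. Define a vertex-level map $h:\x(K)\to\x(L)$ by
\[
h(\sigma)=\varphi(\sigma)\cup\psi(\sigma).
\]
Contiguity guarantees $h(\sigma)\in\x(L)$, and if $\sigma\subseteq\tau$ in $\x(K)$, then $\varphi(\sigma)\cup\psi(\sigma)\subseteq\varphi(\tau)\cup\psi(\tau)$, so $h$ is order-preserving and hence continuous on the associated finite $T_0$-spaces. By construction, $\x(\varphi)(\sigma)=\varphi(\sigma)\subseteq h(\sigma)$ and $\x(\psi)(\sigma)=\psi(\sigma)\subseteq h(\sigma)$ for every $\sigma$, which translates to $\x(\varphi)\le h$ and $\x(\psi)\le h$ in the pointwise order on $\x(L)^{\x(K)}$. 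This gives the fence
\[
\x(\varphi)\le h\ge \x(\psi),
\]
and Proposition \ref{homotopias} yields $\x(\varphi)\simeq\x(\psi)$.

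There is no real obstacle; the only thing to be careful about is checking that $h$ is order-preserving and genuinely lands in $\x(L)$, both of which follow immediately from contiguity. The main conceptual point is simply that the join-style map $\sigma\mapsto\varphi(\sigma)\cup\psi(\sigma)$ serves as a common upper bound of $\x(\varphi)$ and $\x(\psi)$, so the homotopy between them in the finite-space category is actually witnessed by a single zig-zag of length two.
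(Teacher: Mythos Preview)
Your proof is correct and follows essentially the same approach as the paper: reduce to contiguous maps, define the intermediate map $\sigma\mapsto\varphi(\sigma)\cup\psi(\sigma)$, and observe that it sits above both $\x(\varphi)$ and $\x(\psi)$ in the pointwise order, giving the required fence. You are slightly more explicit about the transitivity reduction and the order-preservation check, but the argument is the same.
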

\begin{proof}
Assume that $\varphi$ and $\psi$ are contiguous. Then the map $f :\x(K)\to \x(L)$, defined by $f (\sigma)=\varphi(\sigma)\cup \psi(\sigma)$ is well-defined and continuous. Moreover $\x(\varphi)\le f \ge \x(\psi)$, and then $\x(\varphi)\simeq \x(\psi)$.
\end{proof}

Now we will study the relationship between strong homotopy types of simplicial complexes and homotopy types of finite spaces. The following result is a direct consequence of Propositions \ref{homotcontig} and \ref{lema1p}.

\begin{teo} \label{stronght}  \

\begin{enumerate}
\item[(a)] If two finite $T_0$-spaces are homotopy equivalent, their associated complexes have the same strong homotopy type.
\item[(b)] If two complexes have the same strong homotopy type, the associated finite spaces are homotopy equivalent.
\end{enumerate}
\end{teo}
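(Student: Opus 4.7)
The plan is to derive both implications directly from the two propositions immediately preceding the theorem, together with the functoriality of $\kp$ and $\x$ and the characterization of strong homotopy type via the relation $\sim$ given in Corollary \ref{equivstrong}.

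For part (a), I would start with a homotopy equivalence between finite $T_0$-spaces $X$ and $Y$, witnessed by continuous maps $f\colon X\to Y$ and $g\colon Y\to X$ with $gf\simeq 1_X$ and $fg\simeq 1_Y$. Apply the functor $\kp$ to obtain simplicial maps $\kp(f)\colon \kp(X)\to\kp(Y)$ and $\kp(g)\colon \kp(Y)\to\kp(X)$. By functoriality, $\kp(g)\kp(f)=\kp(gf)$ and $\kp(f)\kp(g)=\kp(fg)$, and by Proposition \ref{homotcontig} these lie in the same contiguity class as $\kp(1_X)=1_{\kp(X)}$ and $\kp(1_Y)=1_{\kp(Y)}$ respectively. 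Hence $\kp(f)$ is a strong equivalence in the sense of Definition \ref{stronghomotopyequivalence}, so $\kp(X)\sim\kp(Y)$, and by Corollary \ref{equivstrong} the complexes $\kp(X)$ and $\kp(Y)$ have the same strong homotopy type.

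For part (b), I would argue symmetrically. Suppose $K$ and $L$ have the same strong homotopy type. By Corollary \ref{equivstrong}, $K\sim L$, so there exist simplicial maps $\varphi\colon K\to L$ and $\psi\colon L\to K$ with $\psi\varphi\sim 1_K$ and $\varphi\psi\sim 1_L$. Apply the functor $\x$; by functoriality $\x(\psi)\x(\varphi)=\x(\psi\varphi)$ and $\x(\varphi)\x(\psi)=\x(\varphi\psi)$, and by Proposition \ref{lema1p} each of these is homotopic to the corresponding identity. Therefore $\x(\varphi)$ and $\x(\psi)$ are mutually inverse homotopy equivalences, so $\x(K)\simeq \x(L)$.

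There is essentially no obstacle beyond correctly invoking the two preceding propositions and the characterization of $\sim$ via cores: the substance of the theorem is exactly that $\kp$ sends homotopies to contiguity classes (Proposition \ref{homotcontig}) and $\x$ sends contiguity classes to homotopies (Proposition \ref{lema1p}), and functoriality then converts the equivalences on one side to equivalences on the other. The only point to flag is that the definition of strong homotopy type is given via zig-zags of strong collapses and expansions, so one needs Corollary \ref{equivstrong} to replace it by the single-map notion $\sim$, which is the form in which Proposition \ref{homotcontig} directly applies.
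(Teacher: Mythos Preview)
Your proof is correct and is exactly the argument the paper has in mind: the paper simply states that the theorem is a direct consequence of Propositions \ref{homotcontig} and \ref{lema1p}, and you have spelled out precisely how, using functoriality of $\kp$ and $\x$ together with Corollary \ref{equivstrong} (for (b) the easier Proposition \ref{shts} already suffices).
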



In fact, we can give a more precise result:

\begin{teo} \label{mainstrong} \

\begin{enumerate} 
\item[(a)] Let $X$ be a finite $T_0$-space and let $Y\subseteq X$. If $X \scp Y$, then $\kp(X) \scp \kp (Y)$.
\item[(b)] Let $K$ be a complex and let $L\subseteq K$. If $K \scp L$, then $\x(K) \scp \x (K)$.
\end{enumerate}
\end{teo}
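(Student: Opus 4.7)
The plan is to reduce each part, by induction on the number of elementary strong collapses, to the case of a single elementary move.

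For (a), let $x$ be a beat point of $X$; by symmetry it suffices to treat the case of a down beat point, so set $x' = \max \hat{U}_x$. The link $lk_{\kp(X)}(x)$ consists of the chains of $X$ disjoint from $x$ whose elements are all comparable with $x$, so $lk_{\kp(X)}(x) = \kp(\hat{C}_x)$. Every point of $\hat{C}_x$ is comparable with $x'$: elements of $\hat{U}_x$ are $\le x'$ by the maximality of $x'$, while elements $y \in \hat{F}_x$ satisfy $y > x > x'$. Hence $x'$ can be adjoined to every chain in $\hat{C}_x$, and $\kp(\hat{C}_x)$ is the simplicial cone with apex $x'$ over $\kp(\hat{C}_x \smallsetminus \{x'\})$. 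Thus $x$ is dominated by $x'$ in $\kp(X)$, giving the elementary strong collapse $\kp(X) \esc \kp(X) \smallsetminus x = \kp(X \smallsetminus \{x\})$, and (a) follows by iteration.

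For (b), reduce to the case $L = K \smallsetminus v$ with $v$ dominated by $v'$. Rather than try to exhibit the beat points of $\x(K) \smallsetminus \x(L)$ by hand, I would realize $\x(L)$ as a strong deformation retract of $\x(K)$ and invoke Corollary \ref{rdf}. Define
$$r(\sigma) = \begin{cases} \sigma & \text{if } v \notin \sigma, \\ (\sigma \smallsetminus \{v\}) \cup \{v'\} & \text{if } v \in \sigma, \end{cases} \qquad h(\sigma) = \begin{cases} \sigma & \text{if } v \notin \sigma, \\ \sigma \cup \{v'\} & \text{if } v \in \sigma. \end{cases}$$
The domination of $v$ by $v'$ says that any simplex containing $v$ sits inside a maximal simplex containing $v'$, so $\sigma \cup \{v'\}$ and $(\sigma \smallsetminus \{v\}) \cup \{v'\}$ are indeed simplices of $K$; a routine check on whether $\sigma$ and $\tau$ contain $v$ shows that $r$ and $h$ are order-preserving. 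Since $r(\sigma) \subseteq h(\sigma)$ and $\sigma \subseteq h(\sigma)$ for all $\sigma$, the fence $ir \le h \ge 1_{\x(K)}$ (with $i: \x(L) \hookrightarrow \x(K)$) and Proposition \ref{homotopias} give $ir \simeq 1_{\x(K)}$; because $h$ fixes every simplex not containing $v$, this homotopy is relative to $\x(L)$. Hence $\x(L)$ is a strong deformation retract of $\x(K)$, and Corollary \ref{rdf} yields $\x(K) \scp \x(L)$.

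The only mildly delicate point is the monotonicity of $r$ and $h$ in the mixed case $\sigma \subseteq \tau$ with $v \in \tau$ but $v \notin \sigma$; there $r(\sigma) = \sigma \subseteq \tau \smallsetminus \{v\} \subseteq r(\tau)$ and analogously for $h$, so no step is genuinely hard. The argument hinges on the definition of dominated vertex, used precisely to guarantee that the simplices produced by $r$ and $h$ actually lie in $K$; no further machinery is needed beyond Corollary \ref{rdf} and Proposition \ref{homotopias}.
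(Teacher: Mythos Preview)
Your argument is correct and follows the paper's proof essentially line for line: part (a) is identical, and for (b) your maps $r$ and $h$ are precisely the paper's $r$ and $f$ (with your $v'$ playing the role of the apex $a$ of $lk(v)=aL$), leading to the same fence $ir\le h\ge 1_{\x(K)}$ rel $\x(K\smallsetminus v)$ and the same appeal to Corollary~\ref{rdf}.
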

\begin{proof}
If $x\in X$ is a beat point, there exists a point $x'\in \hat{C}_x$ which is comparable with all the other points of $\hat{C}_x$. Then $lk_{\kp (X)}(x)$ is a simplicial cone. Therefore, $\kp (X)\scp \kp (X) \smallsetminus x= \kp (X\smallsetminus \{x\})$.

If $K$ is a complex and $v\in K$ is such that $lk(v)=aL$ is a simplicial cone, we define $r:\x (K) \to \x (K\smallsetminus v)$ as follows:
\begin{displaymath}
  r(\sigma)=\left\{\begin{array}{ll}
  a\sigma \smallsetminus\{v\} & \textrm{if } v\in \sigma \\
  \sigma & \textrm{if } v\notin \sigma \\
 \end{array} \right.
\end{displaymath}
Clearly $r$ is a well defined order preserving map. Denote $i: \x (K\smallsetminus v) \hookrightarrow \x (K)$ the inclusion and define $f:\x (K) \to \x (K)$, 
\begin{displaymath}
  f(\sigma)=\left\{\begin{array}{ll}
  a\sigma & \textrm{if } v\in \sigma \\
  \sigma & \textrm{if } v\notin \sigma \\
 \end{array} \right.
\end{displaymath}
Then $ir\le f \ge 1_{\x (K)}$ and both $ir$ and $f$ are the identity on $\x (K \smallsetminus v)$. Therefore $ir\simeq 1_{\x(K)} \textrm{ rel } \x (K\smallsetminus v)$ and then $\x (K) \scp \x (K\smallsetminus v)$ by Corollary \ref{rdf}.
\end{proof}


Notice that the barycentric subdivision of a complex $K$ coincides with $K'=\kp (\x (K))$. The \textit{barycentric subdivision} of a finite $T_0$-space $X$ is defined as $X'=\x (\kp (X))$.

\begin{teo} \label{scysubdiv}
Let $K$ be a complex. Then $K$ is strong collapsible if and only if $K'$ is strong collapsible.
\end{teo}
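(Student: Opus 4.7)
The plan is to move between the simplicial and finite-space settings using Theorem \ref{mainstrong} and to handle the nontrivial direction by induction combined with the forthcoming analogue for finite $T_0$-spaces.

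\textbf{Forward direction.} Suppose $K \scp \{v\}$ for some vertex $v$. By Theorem \ref{mainstrong}(b), $\x(K) \scp \x(\{v\}) = *$. Applying Theorem \ref{mainstrong}(a) to this strong collapse of finite spaces yields $\kp(\x(K)) \scp \kp(*) = *$. Since $\kp(\x(K)) = K'$ by definition of the barycentric subdivision, we conclude $K' \scp *$, so $K'$ is strong collapsible.

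\textbf{Reverse direction.} Assume $K' \scp *$. I would induct on the number of vertices of $K$; the base case (a single vertex) is immediate. In the inductive step, if $K$ has a dominated vertex $v$, then $K \scp K \smallsetminus v$, and by the forward direction just established, $K' \scp (K \smallsetminus v)'$. Combined with the hypothesis $K' \scp *$, the uniqueness of the core up to isomorphism (Corollary \ref{seminimal}) forces $(K \smallsetminus v)' \scp *$. The inductive hypothesis applied to $K \smallsetminus v$ then yields $K \smallsetminus v \scp *$, whence $K \scp K \smallsetminus v \scp *$.

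The remaining case --- when $K$ is minimal with at least two vertices --- is the main obstacle. Here the plan is to invoke the analogous statement for finite $T_0$-spaces, namely that $X \scp *$ if and only if $X' \scp *$ for any finite $T_0$-space $X$ (stated in the paper as a companion result). Applying Theorem \ref{mainstrong}(b) to the hypothesis $K' \scp *$ gives $(\x(K))' \scp *$, and the finite-space analogue then yields $\x(K) \scp *$. The final step, which is the technical heart of the argument, is to deduce from minimality of $K$ together with contractibility of $\x(K)$ that $K$ itself is a single vertex. I would address this by unwinding the characterization of beat points of $\x(K)$: a simplex $\sigma \in \x(K)$ is a (necessarily up) beat point precisely when $\sigma$ has a unique minimal coface in $K$. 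For $\sigma$ a vertex, this condition immediately contradicts minimality of $K$, so all beat points of $\x(K)$ lie in dimension $\geq 1$. Arguing that iterated removal of such higher-dimensional beat points from $\x(K)$ cannot collapse it to a point unless $K$ is trivial is the delicate part of the plan, and is where I expect the real technical work to lie.
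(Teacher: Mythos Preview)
Your forward direction and your reduction of the reverse direction to the case $K$ minimal both match the paper. The gap is in the minimal case. First, the finite-space analogue you want to invoke is Corollary~\ref{xprima}, which in the paper's main line of development is \emph{derived from} Theorem~\ref{scysubdiv}; citing it here is circular. (The paper does later give an independent proof of Corollary~\ref{xprima} via Theorem~\ref{alter}, and in fact Theorem~\ref{alter} applied directly would already finish your argument: $\x(K)$ contractible means $\x(K)\simeq\x(*)$, hence $K\sim *$, hence $K=*$ by minimality. But you do not take that route.) Second, your direct beat-point argument does not close: you correctly observe that when $K$ is minimal no vertex $\{v\}$ is a beat point of $\x(K)$, but once you remove a higher-dimensional simplex the resulting subposet is no longer a face poset, and $\{v\}$ may well acquire a minimum in its shrunken up-set and become an up beat point. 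Your sketch offers no mechanism to control this, and you yourself flag it as the unresolved ``technical heart''.

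The paper's proof is quite different and self-contained: it works entirely inside $L'$, where $L$ is the core of $K$. Given a sequence of elementary strong collapses $L'=L_0,L_1,\ldots,L_n=*$, one shows by induction on $i$ that the barycenter $\hat v$ of every vertex $v\in L$ and the barycenter $\hat\sigma$ of every maximal simplex $\sigma\in L$ survive in $L_i$, by checking that their links in $L_i$ are never cones. For $\hat\sigma$ with $\sigma$ maximal, the link in $L_i$ contains all $\hat v_j$ with $v_j\in\sigma$, and a putative cone apex $\hat\tau$ would force $v_j\in\tau\subsetneq\sigma$ for every $j$, which is impossible. For $\hat v$, the link contains all $\hat\sigma_j$ with $\sigma_j$ a maximal simplex through $v$, and a putative apex $\hat\tau$ with $v\subsetneq\tau\subseteq\sigma_j$ for every $j$ would exhibit a vertex of $\tau$ dominating $v$, contradicting minimality of $L$. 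Since all $\hat v$ therefore persist to $L_n=*$, one concludes $L=*$.
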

\begin{proof}
If $K\scp *$, then $\x (K)\scp *$ and $K'=\kp (\x (K))\scp *$ by Theorem \ref{mainstrong}. Suppose now that $K$ is such that $K'\scp *$. Let $L$ be a core of $K$. Then $K\scp L$ and by Theorem \ref{mainstrong}, $K' \scp L'$. Therefore $L$ is minimal and $L'$ is strong collapsible. Let $L_0=L', L_1, L_2, ..., L_n=*$ be a sequence of subcomplexes of $L'$ such that there is an elementary strong collapse from $L_i$ to $L_{i+1}$ for every $0\le i <n$. We will prove by induction in $i$ that all the barycenters of the $0$-simplices and of the maximal simplices of $L$ are vertices of $L_i\subseteq L'$.

Let $\sigma =\{v_0, v_1, \ldots , v_k \}$ be a maximal simplex of $L$. By induction, the barycenter $\hat{\sigma}$ of $\sigma$ is a vertex of $L_i$. We claim that $lk _{L_i} (\hat{\sigma})$ is not a cone. If $\sigma$ is a $0$-simplex, that link is empty, so we assume $\sigma$ has positive dimension. Since $\hat{v}_j \hat{\sigma}$ is a simplex of $L'$, $\hat{v}_j\in L_i$ by induction and $L_i$ is a full subcomplex of $L'$, then $\hat{v}_j\in lk _{L_i} (\hat{\sigma})$ for every $0\le j\le k$. Suppose $lk _{L_i} (\hat{\sigma})$ is a cone. In particular, there exists $\sigma ' \in L$ such that $\hat{\sigma} '\in lk _{L_i} (\hat{\sigma})$ and moreover $\hat{\sigma} ' \hat{v}_j\in lk _{L_i} (\hat{\sigma})$ for every $j$. Since $\sigma$ is a maximal simplex, $\sigma ' \subsetneq \sigma$ and $v_j\in \sigma '$ for every $j$. Then $\sigma \subseteq \sigma '$, which is a contradiction. Hence, $\hat{\sigma}$ is not a dominated vertex of $L_i$ and therefore, $\hat{\sigma} \in L_{i+1}$.

Let $v\in L$ be a vertex. By induction, $\hat{v}\in L_i$. As above, if $v$ is a maximal simplex of $L$, $lk _{L_i} (\hat{v})=\emptyset$. Suppose $v$ is not a maximal simplex of $L$. Let $\sigma _0, \sigma _1, \ldots , \sigma _k$ be the maximal simplices of $L$ which contain $v$. By induction $\hat{\sigma} _j\in L_i$ for every $0\le j\le k$, and since $L_i\subseteq L'$ is full, $\hat{\sigma} _j\in lk _{L_i} (\hat{v})$. Suppose that $lk _{L_i} (\hat{v})$ is cone. Then there exists $\sigma \in L$ such that $\hat{\sigma} \in lk _{L_i} (\hat{v})$ and moreover, $\hat{\sigma} \hat{\sigma} _j \in lk _{L_i} (\hat{v})$ for every $j$. In particular, $v\subsetneq \sigma $ and $\sigma \subseteq \sigma _j$ for every $j$. Let $v'\in \sigma$, $v'\neq v$. Then $v'$ is contained in every maximal simplex which contains $v$. This contradicts the minimality of $L$. Therefore $\hat{v}$ is not dominated in $L_i$, which proves that $\hat{v}\in L_{i+1}$.

Finally, $L_n=*$ contains all the barycenters of the vertices of $L$. Thus, $L=*$ and $K$ is strong collapsible.

\end{proof}

We will prove a finite-space version of Theorem \ref{scysubdiv}. In general, $X$ and $X'$ do not have the same homotopy type. However, we will show that $X$ is contractible if and only if its barycentric subdivision $X'$ is contractible. The first implication follows immediately from Theorem \ref{mainstrong}. For the converse, we need some previous results.

%



\begin{lema} \label{compara}
Let $X$ be a finite $T_0$-space. Then $X$ is a minimal finite space if and only if there are no $x,y\in X$ with $x\neq y$ such that if $z\in X$ is comparable with $x$, then so is it with $y$.
\end{lema}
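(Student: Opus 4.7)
The plan is to establish both directions of the equivalence.

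For the forward direction, I would argue the contrapositive: if $X$ has a beat point, exhibit such a pair $(x,y)$. Suppose $x$ is a down beat point with $y := \max \hat{U}_x$; then for any $z \in C_x$ the trichotomy $z < x$, $z = x$, $z > x$ yields that $z$ is comparable with $y$ (use $z \le y$ in the first case and $y < x \le z$ in the other two). The up beat case is dual.

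For the converse, assume $x \ne y$ with $C_x \subseteq C_y$. Since $x \in C_x \subseteq C_y$, the two points are comparable, and by order-reversal I may suppose $y < x$. I will argue by induction on $|(y,x)| := |\{z : y < z < x\}|$ that $X$ has a beat point. The crucial observation is that every maximal element $m$ of the subposet $\hat{U}_x$ satisfies $m \ge y$: indeed $m \in \hat{U}_x \subseteq C_x \subseteq C_y$, so $m$ is comparable with $y$, and $m < y$ is impossible because $y$ itself lies in $\hat{U}_x$, which would violate the maximality of $m$.

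If $\hat{U}_x$ has a unique maximal element then it is a maximum, so $x$ is a down beat point and we are done (this already covers the base case $|(y,x)|=0$, where $y$ itself is forced to be the maximum). Otherwise $\hat{U}_x$ has several maximal elements, none of which can equal $y$: if $y$ were maximal then any other maximal $m' \ne y$ would satisfy $m' \ge y$, forcing $m' > y$, a contradiction. Pick any maximal $m$ of $\hat{U}_x$; then $y < m < x$. I would check $C_m \subseteq C_y$ directly: for $w \le m$ one has $w \in \hat{U}_x \subseteq C_y$, and for $w > m$ one has $w > y$, hence $w$ is comparable with $y$. Since $|(y,m)| < |(y,x)|$, the inductive hypothesis applied to the pair $(m,y)$ produces the desired beat point.

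The main obstacle is the converse, specifically this inductive step: one must replace the pair $(x,y)$ by a strictly smaller pair while preserving the containment of comparability sets. The correct substitution is to move $x$ down to any maximal element of $\hat{U}_x$ that lies strictly above $y$; the verification of $C_m \subseteq C_y$ then uses both $m < x$ (so elements $\le m$ remain in $\hat{U}_x$) and $m > y$ (so elements $> m$ are automatically $> y$).
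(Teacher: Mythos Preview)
Your proof is correct and follows essentially the same strategy as the paper: the forward direction is identical, and for the converse both arguments descend from $x$ toward $y$ to locate a down beat point. The only cosmetic difference is that the paper does the descent in one step---it takes $x'$ minimal in the set $A=\{z>y:C_z\subseteq C_y\}$ and checks directly that $y=\max\hat{U}_{x'}$---whereas you unwind the same minimality argument as an explicit induction on $|(y,x)|$, stepping down through a maximal element of $\hat{U}_x$ at each stage.
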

\begin{proof}
If $X$ is not minimal, there exists a beat point $x$. Without loss of generality assume that $x$ is a down beat point. Let $y$ be the maximum of $\hat{U}_x$. Then if $z\ge x$, $z\ge y$ and if $z<x$, $z\le y$.

Conversely, suppose that there exists $x$ and $y$ as in the statement. In particular $x$ is comparable with $y$. We may assume that $x> y$. Let $A=\{z\in X \ | \ z> y $ and $ C_z\subseteq C_y \}$. This set is non-empty since $x\in A$. Let $x'$ be a minimal element of $A$. We show that $x'$ is a down beat point with $y=max(\hat{U}_{x'})$. Let $z<x'$, then $z$ is comparable with $y$ since $x'\in A$. Suppose $z>y$. Let $w\in X$. If $w\le z$, then $w\le x'$ and so, $w\in C_y$. If $w\ge z$, $w\ge y$. Therefore $z\in A$, contradicting the minimality of $x'$. Then $z\le y$. Therefore $y$ is the maximum of $\hat{U}_{x'}$. 
\end{proof}

\begin{prop} \label{minimalyk}
Let $X$ be a finite $T_0$-space. Then $X$ is a minimal finite space if and only if $\kp (X)$ is a minimal complex.
\end{prop}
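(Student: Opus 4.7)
The plan is to reduce the statement to Lemma~\ref{compara} by translating the combinatorial condition defining dominated vertices of $\kp(X)$ into the comparability condition appearing there. Recall from the remark following the definition of elementary strong collapse that a vertex $x$ of a complex is dominated by another vertex $y$ if and only if every maximal simplex containing $x$ also contains $y$. Since the maximal simplices of $\kp(X)$ are precisely the maximal chains of the poset $X$, the minimality of $\kp(X)$ becomes a purely order-theoretic statement about $X$.

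First I would prove the following translation lemma: for distinct $x, y \in X$, every maximal chain of $X$ containing $x$ also contains $y$ if and only if every $z \in X$ comparable with $x$ is also comparable with $y$. The ``if'' direction is immediate: if $C$ is a maximal chain through $x$, each $z \in C$ is comparable with $x$, hence with $y$ by hypothesis, so $C \cup \{y\}$ is a chain and by maximality $y \in C$. For the ``only if'' direction, given $z$ comparable with $x$, the chain $\{x, z\}$ extends to some maximal chain $C$; by hypothesis $y \in C$, so $y$ is comparable with $z$.

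Combining this with Lemma~\ref{compara} finishes the argument: $\kp(X)$ fails to be minimal iff there exist distinct $x, y \in X$ such that every maximal chain through $x$ contains $y$, iff (by the translation) there exist distinct $x, y$ such that every element comparable with $x$ is also comparable with $y$, iff (by Lemma~\ref{compara}) $X$ is not a minimal finite space. The only genuine content is the chain-extension translation in the first step; everything else is bookkeeping, and I do not anticipate any real obstacle.
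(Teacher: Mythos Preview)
Your proof is correct and follows essentially the same route as the paper: both directions reduce to Lemma~\ref{compara} by translating domination of a vertex in $\kp(X)$ into the comparability condition on $X$. The only cosmetic difference is that the paper dispatches the forward implication ($X$ not minimal $\Rightarrow$ $\kp(X)$ not minimal) by invoking Theorem~\ref{mainstrong}, while you handle both directions uniformly through your maximal-chain translation; for the backward implication the paper uses the cone description of $lk(x)$ rather than the maximal-simplex characterization, but these are equivalent and the argument is the same.
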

\begin{proof}
If $X$ is not minimal, it has a beat point $x$ and then $\kp (X)\scp \kp (X\smallsetminus \{ x \} )$ by Theorem \ref{mainstrong}. Therefore $\kp (X)$ is not minimal.

Conversely, suppose $\kp (X)$ is not minimal. Then it has a dominated vertex $x$. Suppose $lk (x)=x'L$ for some $x'\in X$, $L\subseteq \kp (X)$. In particular, if $y\in X$ is comparable with $x$, $y\in lk (x)$ and then $yx'\in lk (x)$. Thus, any point comparable with $x$ is also comparable with $x'$. By Lemma \ref{compara}, $X$ is not minimal.
\end{proof}

\begin{coro} \label{xprima}
Let $X$ be a finite $T_0$-space. Then $X$ is contractible if and only if $X'$ is contractible.
\end{coro}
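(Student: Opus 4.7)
The plan is to combine the simplicial version of the theorem (Theorem \ref{scysubdiv}) with the correspondence between strong homotopy types of complexes and homotopy types of finite spaces (Theorems \ref{mainstrong} and \ref{stronght}) and the characterization of minimal finite spaces in terms of minimal complexes (Proposition \ref{minimalyk}). Recall that a finite $T_0$-space is contractible if and only if it strong collapses to a point, by Stong's results together with Corollary \ref{rdf}.

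For the forward implication, suppose $X$ is contractible, so $X\scp *$. Applying Theorem \ref{mainstrong}(a) gives $\kp(X)\scp \kp(*)=*$, and then applying Theorem \ref{mainstrong}(b) to this strong collapse of complexes yields $X'=\x(\kp(X))\scp *$. Hence $X'$ is contractible.

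For the converse, the key idea is to reduce to the core. Let $Y\subseteq X$ be a core of $X$, so $Y$ is minimal and $X\scp Y$. Passing through $\kp$ and $\x$ by Theorem \ref{mainstrong} shows $X'\scp Y'$, so contractibility of $X'$ forces $Y'$ to be contractible, i.e. $Y'\scp *$. Now apply Theorem \ref{mainstrong}(a) once more to obtain $\kp(Y')\scp *$. Because $Y'=\x(\kp(Y))$ by definition, one has $\kp(Y')=\kp(\x(\kp(Y)))=(\kp(Y))'$, the barycentric subdivision of the simplicial complex $\kp(Y)$. Thus $(\kp(Y))'$ is strong collapsible, and by Theorem \ref{scysubdiv} the complex $\kp(Y)$ itself is strong collapsible.

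The final step uses minimality: since $Y$ is a minimal finite space, Proposition \ref{minimalyk} tells us $\kp(Y)$ is a minimal complex. A minimal complex is its own core, and cores are unique up to isomorphism, so the fact that $\kp(Y)\scp *$ forces $\kp(Y)\cong *$. This means $Y$ has exactly one element, hence $X\scp Y=*$, so $X$ is contractible. The only subtle point — and the step I would expect to require the most care in writing out — is the identification $\kp(Y')=(\kp(Y))'$ that lets Theorem \ref{scysubdiv} be invoked; everything else is a clean bookkeeping exercise using the functors $\kp$ and $\x$.
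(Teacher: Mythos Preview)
Your proof is correct and follows essentially the same route as the paper's: reduce to the core $Y$, pass to $Y'$, use Theorem \ref{mainstrong} to get $(\kp(Y))'$ strong collapsible, invoke Theorem \ref{scysubdiv}, and finish with Proposition \ref{minimalyk}. The identification $\kp(Y')=(\kp(Y))'$ that you flag as subtle is in fact purely definitional (since $K'=\kp(\x(K))$ and $Y'=\x(\kp(Y))$), so no extra care is needed there.
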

\begin{proof}
If $X$ is contractible, $X\scp *$, then $\kp (X)\scp *$ by Theorem \ref{mainstrong} and therefore $X'=\x (\kp (X))\scp *$. Now suppose that $X'$ is contractible. Let $Y\subseteq X$ be a core of $X$. Then by Theorem \ref{mainstrong} $X'\scp Y'$. If $X'$ is contractible, so is $Y'$. Again by Theorem \ref{mainstrong}, $\kp (Y')=\kp (Y)'$ is strong collapsible. By Theorem \ref{scysubdiv}, $\kp (Y)$ is strong collapsible. By Proposition \ref{minimalyk}, $\kp (Y)$ is a minimal complex and therefore $\kp (Y)=*$. Then $Y$ is just a point, so $X$ is contractible.
\end{proof}

There is an alternative proof of Corollary \ref{xprima} which does not make use of Theorem \ref{scysubdiv}. It can be obtained from the following result, which is  the converse of Theorem \ref{stronght} (b).

\begin{teo} \label{alter}
Let $K$ and $L$ be two simplicial complexes. If $\x (K)$ and $\x (L)$ are homotopy equivalent, then $K$ and $L$ have the same strong homotopy type.
\end{teo}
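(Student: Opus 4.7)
The plan is to build explicit strong equivalences $\varphi: K\to L$ and $\psi: L\to K$ out of the homotopy equivalences $f:\x(K)\to \x(L)$ and $g:\x(L)\to \x(K)$ that witness $\x(K)\simeq \x(L)$, and then invoke Corollary~\ref{equivstrong}. Any order-preserving map $f:\x(K)\to \x(L)$ gives rise to a simplicial map by \emph{selection}: for each vertex $v\in K$ pick a vertex $\varphi(v)\in f(\{v\})$ (which is a simplex of $L$). This yields a simplicial map, since for any simplex $\sigma=\{v_0,\ldots,v_k\}$ of $K$ one has $f(\{v_i\})\subseteq f(\sigma)$, hence $\varphi(\sigma)\subseteq f(\sigma)\in L$. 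Define $\psi:L\to K$ analogously via the selection $\psi(w)\in g(\{w\})$, so that $\psi\varphi(v)\in gf(\{v\})$ for every vertex $v\in K$ and $\varphi\psi(w)\in fg(\{w\})$ for every vertex $w\in L$.

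Next, I would promote the homotopy $gf\simeq 1_{\x(K)}$ to a contiguity relation $\psi\varphi\sim 1_K$. By Proposition~\ref{homotopias} there is a fence of order-preserving maps
\[
gf=f_0\le f_1\ge f_2\le\cdots \le f_n=1_{\x(K)}.
\]
For each intermediate index $0<j<n$ choose a simplicial map $\varphi_j:K\to K$ by the same selection procedure, taking $\varphi_j(v)\in f_j(\{v\})$ for every vertex $v$ of $K$; set $\varphi_0:=\psi\varphi$ and $\varphi_n:=1_K$. Each $\varphi_j$ is simplicial for exactly the reason given above.

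The key observation is that consecutive members of this fence are contiguous. If $f_j$ and $f_{j+1}$ are comparable, say $f_j\le f_{j+1}$, then for every simplex $\sigma\in K$ we have $f_j(\sigma)\subseteq f_{j+1}(\sigma)$, so $f_j(\sigma)\cup f_{j+1}(\sigma)=f_{j+1}(\sigma)$, a simplex of $K$. By construction $\varphi_j(\sigma)\cup \varphi_{j+1}(\sigma)\subseteq f_j(\sigma)\cup f_{j+1}(\sigma)$, and any subset of a simplex is a simplex, so $\varphi_j$ is contiguous to $\varphi_{j+1}$. This includes the endpoints: $\psi\varphi(\sigma)\subseteq gf(\sigma)=f_0(\sigma)$ and $1_K(\sigma)=\sigma=f_n(\sigma)$, so the contiguity with the neighbors $\varphi_1$ and $\varphi_{n-1}$ follows in the same way. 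This chains up to $\psi\varphi\sim 1_K$; applying the mirror argument to $fg\simeq 1_{\x(L)}$ yields $\varphi\psi\sim 1_L$.

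The main technical point to verify carefully is that the endpoint adjustments in the fence really do fit: one must check that the chosen selection $\psi\varphi$ lands inside $f_0(\{v\})=gf(\{v\})$, which is immediate from order-preservation of $g$ applied to $\{\varphi(v)\}\subseteq f(\{v\})$. With $\varphi$ and $\psi$ shown to be mutually inverse up to contiguity, $\varphi$ is a strong equivalence in the sense of Definition~\ref{stronghomotopyequivalence}, so $K\sim L$, and Corollary~\ref{equivstrong} concludes that $K$ and $L$ have the same strong homotopy type.
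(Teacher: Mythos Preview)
Your argument is correct and follows essentially the same route as the paper's proof: build simplicial maps from the poset maps by picking, for each vertex $v$, a vertex in the image simplex $f(\{v\})$, and then use the fence $gf=f_0\le f_1\ge\cdots =1_{\x(K)}$ to produce a chain of contiguous selections linking $\psi\varphi$ to $1_K$. The only cosmetic difference is that the paper first replaces $f$ (and the even-indexed fence maps) by maps sending minimal elements to minimal elements, so that the induced simplicial map is uniquely determined rather than obtained by an arbitrary choice; your direct selection avoids this preliminary step but yields the same contiguity argument, since all that is needed is $\varphi_j(\sigma)\subseteq f_j(\sigma)$.
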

\begin{proof}
Let $f:\x (K)\to \x (L)$ be a homotopy equivalence. We may assume that $f$ sends minimal points to minimal points since in other case, it is possible to replace $f$ by a map $\widehat{f}$ which sends a minimal element $x$ of $\x (K)$ to a minimal element $y\le f(x)$ of $\x (L)$. This new map $\widehat{f}$ is also order-preserving and since $\widehat{f}\le f$, it is also a homotopy equivalence.

The map $f$ induces a vertex map $F:K \to L$ defined by $F(v)=w$ if $f(\{v\})=\{w\}$. Moreover $F$ is simplicial for if $\sigma= \{v_0, v_1, \ldots , v_n\}$ is a simplex of $K$, then $\sigma \ge \{v_i\}$ in $\x (K)$ for every $i$ and then $f(\sigma)\ge f(\{v_i\})$ in $\x (L)$. Thus, $F(v_i)$ is contained in the simplex $f(\sigma)$ for every $i$, which means that $F(\sigma)$ is a simplex of $L$.

Let $g: \x (L)\to \x (K)$ be a homotopy inverse of $f$. We may also assume that $g$ sends minimal elements to minimal elements and therefore it induces a simplicial map $G: L\to K$. We show that $GF\sim 1_K$ and $FG\sim 1_L$.

Since $gf\simeq 1_{\x (K)}$, there exists a sequence of maps $h_i:\x (K)\to \x (K)$ such that $$gf= h_0 \le h_1 \ge \ldots \le h_{2k} =1_{\x (K)}.$$

Once again we can assume that the maps $h_{2i}$ send minimal elements to minimal elements and induce simplicial maps $H_{2i}:K \to K$. The maps $H_{2i}$ and $H_{2i+2}$ are contiguous, for if $\sigma=\{v_0, v_1, \ldots , v_n\}$, then $$h_{2i}(\{v_j\})\le h_{2i}(\sigma)\le h_{2i+1}(\sigma),$$ and then $H_{2i}(v_j)\in h_{2i+1}(\sigma)$ for every $j$. Therefore $H_{2i}(\sigma) \subseteq h_{2i+1}(\sigma)$. Analogously $H_{2i+2}(\sigma) \subseteq h_{2i+1}(\sigma)$ and then $H_{2i}(\sigma)\cup H_{2i+2}(\sigma)$ is a simplex of $K$. Hence, $GF=H_0$ lies in the same contiguity class as $H_{2k}=1_K$. Symmetrically $FG\sim 1_L$ and by Corollary \ref{equivstrong}, $K$ and $L$ have the same strong homotopy type.    
\end{proof}

The alternative proof of Corollary \ref{xprima} mentioned above is as follows. Suppose $X$ is a finite $T_0$-space such that $X'$ is contractible. Let $Y$ be a core of $X$. Then $\x (\kp (Y))=Y'\simeq X'$ is also contractible and by Theorem \ref{alter}, $\kp (Y)$ has the strong homotopy type of a point. However, by Proposition \ref{minimalyk}, $\kp (Y)$ is minimal, and then $\kp (Y)$ is a point, which proves that the core $Y$ of $X$ is a point.

\section{From strong collapses to evasiveness}\label{eva}

The theory of strong homotopy types has been shown to be much easier to handle than the classical simple homotopy theory. In order to understand the difference between simplicial collapses and  strong collapses or, more specifically, to investigate how far is a simplicial collapse from being a strong collapse, one can relax the notion of strong collapse and define inductively different notions of collapses which lie between both concepts. In this section we will compare the various notions of collapses of complexes and use the theory of finite topological spaces to improve some known results on collapsibility and non-evasiveness.

The notion of non-evasiveness arises from problems of graph theory and extends then to combinatorics and combinatorial geometry \cite{Bjo3,Kah,Koz,Lut2,Wel}. We will see that this concept appears naturally in our context. Our approach differs slightly from previous treatments of this subject. As we have already mentioned, we are more interested in understanding the difference between the distinct notions of collapses from a geometric viewpoint. As a consequence of our results, we obtain a stronger version of a known result of V. Welker concerning the barycentric subdivisions of collapsible complexes \cite{Wel}.



\begin{defi}
A strong collapse $K\scp L$ will be also called  a  \textit{$0$-collapse} and will be denoted by $K\ceroc L$. A complex $K$ is \textit{$0$-collapsible} if $K$ $0$-collapses to a point (i.e. if it is strong collapsible). We will say that there is a \textit{$1$-collapse} $K\unoc L$ from $K$ to $L$ if we can remove vertices one by one in such a way that their links are $0$-collapsible complexes. A complex is \textit{$1$-collapsible} if it $1$-collapses to a point. In general, there is an \textit{$(n+1)$-collapse} $K \enemasunoc L$ if there exists a sequence of deletions of vertices whose links are $n$-collapsible. A complex is \textit{$(n+1)$-collapsible} if it $(n+1)$-collapses to a point. 
\end{defi}

Since simplicial cones are strong collapsible, if follows that $n$-collapsible complexes are $(n+1)$-collapsible. In fact, this implication is strict for every $n$. Example \ref{nosc} shows a $1$-collapsible complex which is not $0$-collapsible. This example can be easily generalized to higher dimensions.

\begin{ej}
Denote by $S^0$ the discrete complex with two vertices and consider the simplicial join $(S^0)^{*4}=S^0*S^0*S^0*S^0$ with vertices $a,a',b,b',c,c',d,d'$ and whose simplices are the sets of vertices which do not have a repeated letter. Let $K$ be the complex obtained from that space when removing the maximal simplex $\{a,b,c,d\}$ (See Figure \ref{fig:dosco}). The links of the points $a,b,c$ and $d$ are isomorphic to the complex of Example \ref{nosc}. The links of the othe points are spheres isomorphic to $(S^0)^{*3}=S^0*S^0*S^0$. Then $K$ is not $1$-collapsible. However its easy to see that $K$ is $2$-collapsible.
In general, it can be proved that $(S^0)^{*n}$ minus a maximal simplex is $(n-2)$-collapsible but not $(n-3)$-collapsible.
\end{ej}

\begin{figure}[h]
\begin{center}
\includegraphics[scale=1]{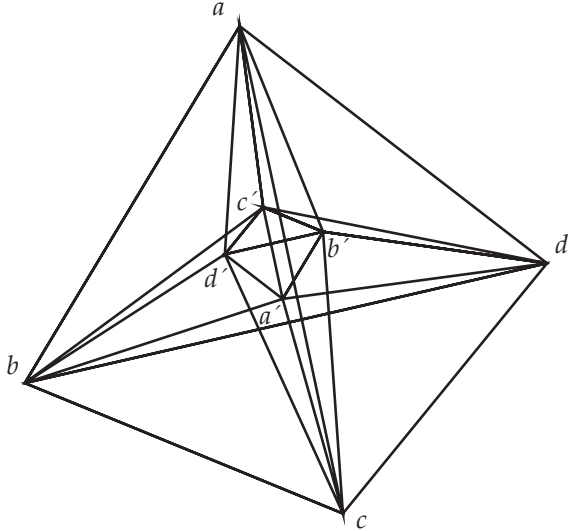}
\caption{A $2$-collapsible simplicial complex which is not $1$-collapsible.}\label{fig:dosco}
\end{center}
\end{figure}

\begin{defi}
A simplicial complex is said to be \textit{non-evasive} if it is $n$-collapsible for some $n\ge 0$. If a complex is not non-evasive, it will be called \textit{evasive}.
\end{defi}

The minimum number $n$ such that $K$ is $n$-collapsible measures how far is the non-evasive complex $K$ from being strong collapsible. Note that this definition of non-evasive coincides with the usual definition, which is as follows: $K$ is non-evasive if it has only one vertex or if inductively there exists a vertex $v\in K$ such that both $K\smallsetminus v$ and $lk (v)$ are non-evasive (cf. \cite{Bjo3,Koz}). 

Note that the notions of non-evasiveness and $n$-collapsibility coincide in complexes of dimension less than or equal to $n+1$.

If $K$ is a complex and $v\in K$ is such that $lk (v)$ is non-evasive, we say that there is an \textit{elementary NE-collapse} from $K$ to $K\smallsetminus v$. We say that there is an \textit{NE-collapse} (or an \textit{NE-reduction}) $K\searrow _{NE} L$ from a complex $K$ to a subcomplex $L$ if there exists a sequence of elementary NE-collapses from $K$ to $L$.
It is easy to prove by induction that non-evasive complexes are collapsible. The converse is not true as the following example shows.

\begin{ej}
The $2$-homogeneous complex of Figure \ref{hachimori} is a slight modification of one defined by Hachimori (see \cite[Section 5.4]{Hac}).
\begin{figure}[h]
\begin{center}
\includegraphics[scale=1]{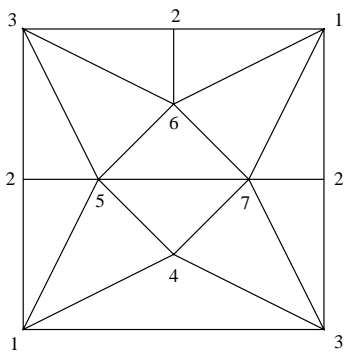}
\caption{A collapsible evasive complex.}\label{hachimori}
\end{center}
\end{figure}
The face $\{1,3\}$ is free and it is easy to see that the complex is collapsible. However, the links of the vertices are graphs which are not trees. Thus it is evasive. 
\end{ej}

\begin{defi}
A finite $T_0$-space $X$ is \textit{non-evasive} if it is a point or inductively if there exists $x\in X$ such that both its link $\hat{C}_x$ and $X\smallsetminus \{x\}$ are non-evasive.

If $x$ is a point of a finite $T_0$-space $X$ such that $\hat{C}_x$ is non-evasive, we say that there is an \textit{elementary NE-collapse} from $X$ to $X\smallsetminus \{x\}$. There is an \textit{NE-collapse} from $X$ to a subspace $Y$ if there exists a sequence of elementary NE-collapses from $X$ to $Y$. We denote this by $X\searrow _{NE} Y$.
\end{defi}

Recall from \cite{Bar2} and \cite{Bar3} that a \textit{weak point} of a finite $T_0$-space $X$ is a point $x\in X$ such that $\hat{C}_x$ is contractible. We say that there is a \textit{collapse} $X\searrow Y$ if the subspace $Y$ can be obtained from $X$ by removing weak points one by one. Beat points are particular cases of weak points.

\begin{obs}
If $X$ is a finite $T_0$-space such that there is a point $x\in X$ with $C_x=X$, it can be proved by induction that $X$ is non-evasive. The link of a beat point satisfies this property and therefore, strong collapses of finite spaces are NE-collapses and contractible spaces are non-evasive. Since the links of weak points are contractible, collapses are NE-collapses and collapsible spaces are non-evasive. 
\end{obs}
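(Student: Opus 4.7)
The plan is to verify the remark by first establishing the key inductive claim and then chaining the stated implications.

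\textbf{Main claim:} If $X$ is a finite $T_0$-space and there exists $x\in X$ with $C_x=X$, then $X$ is non-evasive. I would prove this by induction on $|X|$. The base case $|X|=1$ is trivial. For the inductive step, pick any $y\in X$ with $y\neq x$, and verify the two conditions of the recursive definition of non-evasiveness:
\begin{enumerate}
\item[(a)] In $X\smallsetminus\{y\}$, the point $x$ still lies in the space and remains comparable with every remaining point, so $C_x^{X\smallsetminus\{y\}}=X\smallsetminus\{y\}$. By inductive hypothesis, $X\smallsetminus\{y\}$ is non-evasive.
\item[(b)] Since $C_x=X$, the point $x$ is comparable with $y$, hence $x\in\hat{C}_y$. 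Moreover every $z\in\hat{C}_y$ is comparable with $x$ in $X$, and comparability is preserved on subspaces, so $C_x^{\hat{C}_y}=\hat{C}_y$. Since $|\hat{C}_y|<|X|$, the inductive hypothesis gives that $\hat{C}_y$ is non-evasive.
\end{enumerate}
Thus $y$ witnesses the non-evasiveness of $X$.

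\textbf{Beat points satisfy the hypothesis.} If $x$ is a down beat point with $m=\max(\hat{U}_x)$, then $m$ is comparable with every element of $\hat{U}_x$ by definition; and for every $z\in\hat{F}_x$ one has $z>x>m$, so $m$ is comparable with $z$ as well. Hence $C_m^{\hat{C}_x}=\hat{C}_x$, and the main claim applies to give that $\hat{C}_x$ is non-evasive. The up beat point case is symmetric. Therefore each elementary strong collapse $X\searrow\!\!\searrow X\smallsetminus\{x\}$ is an elementary NE-collapse, so strong collapses are NE-collapses. In particular, any contractible finite space $X$, which by Stong's theory satisfies $X\searrow\!\!\searrow *$, is non-evasive.

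\textbf{Collapses are NE-collapses.} A weak point is defined by the condition that its link $\hat{C}_x$ is contractible; by the previous paragraph, contractibility of $\hat{C}_x$ implies non-evasiveness of $\hat{C}_x$, so removing a weak point is an elementary NE-collapse. Iterating, any collapse $X\searrow Y$ is an NE-collapse $X\searrow_{NE} Y$, and in particular every collapsible finite space is non-evasive.

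The only nontrivial obstacle is the main claim, and it is purely combinatorial; the rest are direct chains of definitions. I would just need to be careful that when restricting the hypothesis $C_x=X$ to a subspace containing $x$, comparability of $x$ with the remaining points is automatically inherited, which is what makes the induction go through cleanly in both (a) and (b).
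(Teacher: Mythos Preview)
Your proof is correct and follows exactly the route the paper indicates: the remark in the paper only asserts that the main claim ``can be proved by induction'' without giving details, and you supply precisely that induction (removing any $y\neq x$ and observing that $x$ remains a universal comparable point in both $X\smallsetminus\{y\}$ and $\hat{C}_y$), then chain the consequences for beat and weak points as stated. There is nothing to add or correct.
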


Although it seems awkward, for complexes we have that a NE-collapse $K\searrow _{NE} L$ is a collapse $K\searrow L$, but for finite spaces the implication is in the opposite direction: a collapse $X\searrow Y$ is a NE-collapse $X\searrow _{NE} Y$. However, the following results will show that the notion of NE-collapse for finite spaces corresponds exactly to the notion of NE-collapse of simplicial complexes. These results extend our previous results of \cite{Bar2} on simple homotopy types.

\begin{teo} \label{mainne}
Let $X$ and $Y$ be finite $T_0$-spaces. If $X\searrow _{NE} Y$, then $\kp (X) \searrow _{NE} \kp (Y)$.
\end{teo}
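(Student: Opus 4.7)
The plan is to reduce the statement to the single-step case and then extract its content from a cleaner auxiliary claim about non-evasiveness, proved by a single induction on the cardinality of the finite space.

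First I would observe the two basic combinatorial identities at the level of order complexes: for any point $x$ of a finite $T_0$-space $X$, the full subcomplex of $\kp(X)$ spanned by the vertices different from $x$ is exactly $\kp(X \smallsetminus \{x\})$, and the link of $x$ in $\kp(X)$ is exactly $\kp(\hat{C}_x)$. The first is immediate from the definition of $\kp$, and the second is because a chain $\sigma \subseteq X \smallsetminus \{x\}$ satisfies that $\sigma \cup \{x\}$ is a chain if and only if every element of $\sigma$ is comparable with $x$, i.e. $\sigma \subseteq \hat{C}_x$. Since an NE-collapse $X \searrow_{NE} Y$ is by definition a concatenation of elementary NE-collapses, and elementary NE-collapses of complexes compose in the obvious way, it suffices to prove the theorem for a single elementary move $X \searrow_{NE} X \smallsetminus \{x\}$.

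The elementary case reduces immediately to the following auxiliary statement: if $Z$ is a non-evasive finite $T_0$-space, then $\kp(Z)$ is a non-evasive simplicial complex. Indeed, granting this, the hypothesis that $\hat{C}_x$ is non-evasive gives that $lk_{\kp(X)}(x) = \kp(\hat{C}_x)$ is a non-evasive complex, which is exactly the condition for $\kp(X) \searrow_{NE} \kp(X) \smallsetminus x = \kp(X \smallsetminus \{x\})$ to be an elementary NE-collapse of complexes.

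To prove the auxiliary statement I would induct on $|Z|$. The base case $|Z| = 1$ is trivial since $\kp(Z)$ is a single vertex. For the inductive step, use the recursive formulation of non-evasiveness recalled in the paragraph after the definition of NE-collapse for complexes: a non-evasive $Z$ on more than one point admits a point $z$ such that both $\hat{C}_z$ and $Z \smallsetminus \{z\}$ are non-evasive. By inductive hypothesis, $\kp(\hat{C}_z) = lk_{\kp(Z)}(z)$ and $\kp(Z \smallsetminus \{z\}) = \kp(Z) \smallsetminus z$ are non-evasive complexes, which by the recursive characterization of non-evasive complexes exhibits $\kp(Z)$ as non-evasive through the vertex $z$.

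The main subtlety—and the only place where something could go wrong—is making sure that the definition of non-evasiveness for complexes (via $n$-collapsibility) is consistently used through the recursive characterization; this is precisely the equivalence noted in the paragraph containing the remark that \emph{$K$ is non-evasive if and only if it is a point or there exists $v$ such that both $K \smallsetminus v$ and $lk(v)$ are non-evasive}, and once this is invoked the induction closes without further work.
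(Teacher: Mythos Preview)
Your proof is correct and is essentially the same argument as the paper's. The only organizational difference is that you isolate the auxiliary claim ``$Z$ non-evasive $\Rightarrow \kp(Z)$ non-evasive'' and prove it by its own induction on $|Z|$, whereas the paper runs the induction directly on the theorem statement and applies the inductive hypothesis to the NE-collapse $\hat{C}_x \searrow_{NE} *$; unwinding either version yields the other.
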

\begin{proof}
We prove the result by induction in the number of points of $X$. It suffices to corroborate it for elementary NE-collapses. Let $x\in X$ be such that $\hat{C}_x$ is non-evasive. Then $\hat{C}_x \searrow _{NE} *$ and by induction $lk _{\kp (X)} (x)=\kp (\hat{C}_x)\searrow _{NE} *$, i.e. $lk _{\kp (X)} (x)$ is non-evasive. Therefore $\kp (X)\searrow _{NE} \kp(X) \smallsetminus x=\kp (X\smallsetminus \{x\})$.  
\end{proof}

In particular one deduces the analogous version for (usual) collapses proved in \cite{Bar2}. For the sake of completeness we include the proof of the following result from \cite{Bar2}.

\begin{teo} \label{viejo}
If a complex $K$ collapses to a subcomplex $L$, then $\x (K)\searrow \x (L)$.
\end{teo}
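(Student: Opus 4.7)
The plan is to induct on the length of the collapsing sequence, so it suffices to handle a single elementary collapse $K\searrow K\smallsetminus\{\sigma,\tau\}$, where $\sigma$ is a free face of $\tau$. Freeness forces $\tau$ to be a maximal simplex of $K$ having $\sigma$ as a codimension-one face, so I can write $\tau=\sigma\cup\{v\}$ for the unique vertex $v\in\tau\smallsetminus\sigma$. My strategy is to realize the removal of the pair $\{\sigma,\tau\}$ as two successive weak-point deletions in the face poset: first $\sigma$, then $\tau$.

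For the first step, I would observe that in $\mathcal{X}(K)$ the set $\hat{F}_\sigma$ consists of those simplices of $K$ strictly containing $\sigma$, and by freeness this set is exactly $\{\tau\}$, which trivially has $\tau$ as its minimum. Hence $\sigma$ is an (up) beat point, and since beat points are weak points one obtains a collapse $\mathcal{X}(K)\searrow X$, where $X:=\mathcal{X}(K)\smallsetminus\{\sigma\}$.

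For the second step I need $\tau$ to be a weak point of $X$. Because $\tau$ is maximal, $\hat{F}_\tau^{X}=\emptyset$ and so $\hat{C}_\tau^{X}=\hat{U}_\tau^{X}=\{\rho : \emptyset\neq\rho\subsetneq\tau,\ \rho\neq\sigma\}$. I would show this finite space is contractible by introducing the order-preserving map $f\colon\hat{C}_\tau^{X}\to\hat{C}_\tau^{X}$, $\rho\mapsto\rho\cup\{v\}$. The essential check is well-definedness: $\rho\cup\{v\}=\tau$ would force $\rho=\sigma$, which has been removed, so $f(\rho)\subsetneq\tau$; and $f(\rho)\neq\sigma$ because $v\in f(\rho)$ but $v\notin\sigma$. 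Then $\mathrm{id}\leq f\geq c_{\{v\}}$, where $c_{\{v\}}$ is the constant map to the vertex $\{v\}\in\hat{C}_\tau^{X}$, so by Proposition \ref{homotopias} the identity of $\hat{C}_\tau^{X}$ is homotopic to a constant. Hence $\tau$ is a weak point of $X$ and $X\searrow X\smallsetminus\{\tau\}=\mathcal{X}(L)$.

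Concatenating the two collapses yields $\mathcal{X}(K)\searrow\mathcal{X}(L)$, and the induction concludes. The main obstacle is the contractibility of the punctured link in the second step; the subtle point is that deleting $\sigma$ removes precisely the unique obstruction to the ``cone onto $v$'' retraction $f$ being well-defined, which forces the two deletions to be performed in this specific order.
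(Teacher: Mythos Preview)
Your proof is correct and follows the same two-step strategy as the paper: first remove $\sigma$ as an up beat point of $\x(K)$, then remove $\tau$ as a weak point of $\x(K)\smallsetminus\{\sigma\}$. The only difference lies in how contractibility of $\hat{C}_\tau^{X}$ is established: the paper observes that this link is precisely $\x(a\dot{\sigma})$ (the face poset of the simplicial cone $a\dot{\sigma}$, in your notation $v\dot{\sigma}$) and invokes Theorem~\ref{mainstrong}, whereas you build the contracting homotopy directly via the fence $\mathrm{id}\le f\ge c_{\{v\}}$. Your argument is slightly more self-contained, while the paper's identification makes the structure of the link more explicit; substantively the two are the same.
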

\begin{proof}
We may assume there is an elementary collapse $K\searrow L$ of a free face $\sigma$ of a simplex $a\sigma$. Then $\sigma \in \x (K)$ is covered only by $a\sigma$. Therefore $\sigma$ is a beat point and in particular $\x (K) \searrow \x (K)\smallsetminus \{\sigma\}$. Now, the link of $a\sigma$ in $\x (K)\smallsetminus \{\sigma\}$ is $\x (a \dot{\sigma})$ which is contractible by Theorem $\ref{mainstrong}$ since the cone $a\dot{\sigma}$ is strong collapsible. Then, $a\sigma$ is a weak point of $\x (K)\smallsetminus \{\sigma\}$ which proves that $\x (K)\smallsetminus \{\sigma\}\searrow \x (L)$.  
\end{proof}

In particular we deduce the following

\begin{coro} \label{mainnep}
If $K\searrow _{NE} L$ is a NE-collapse of complexes, then $\x(K) \searrow _{NE} \x (L)$. 
\end{coro}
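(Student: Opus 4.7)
The plan is to chain together three facts that have been established earlier in the paper, avoiding any fresh induction on the finite-space side. It will suffice (by iterating) to treat one elementary NE-collapse $K \searrow_{NE} K\smallsetminus v$, where $v\in K$ is a vertex whose link $lk_K(v)$ is non-evasive.

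First I would pass from the NE-collapse to an ordinary simplicial collapse. The paper has already observed that non-evasive complexes are collapsible. Hence $lk_K(v)$ collapses to a point, and therefore the closed star $st_K(v)=v\,lk_K(v)$ collapses to its base $lk_K(v)=st_K(v)\cap(K\smallsetminus v)$. Using the standard fact $K=st_K(v)+(K\smallsetminus v)$ and the criterion that $A+B\searrow B$ iff $A\searrow A\cap B$, this gives $K\searrow K\smallsetminus v$. Iterating over the sequence of elementary NE-collapses produced by the hypothesis $K\searrow_{NE} L$, we obtain a genuine simplicial collapse $K\searrow L$.

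Next I would apply Theorem \ref{viejo}, which converts the simplicial collapse $K\searrow L$ into a collapse of finite spaces $\x(K)\searrow \x(L)$.

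Finally I would invoke the remark preceding Theorem \ref{mainne}: the link $\hat{C}_x$ of a weak point of a finite $T_0$-space is contractible, hence non-evasive, so a collapse of finite $T_0$-spaces is in particular an NE-collapse. Applying this to $\x(K)\searrow \x(L)$ yields $\x(K)\searrow_{NE}\x(L)$, as required. There is no real obstacle here; the corollary is essentially a bookkeeping consequence of the implications ``non-evasive $\Rightarrow$ collapsible'' for complexes and ``collapse $\Rightarrow$ NE-collapse'' for finite spaces, bridged by Theorem \ref{viejo}. The only thing to keep in mind is that the direction of the implication between the two notions is opposite in the two categories, so one must route through the classical collapse at the middle step rather than trying to transport NE-collapses directly via the face-poset functor.
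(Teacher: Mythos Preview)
Your argument is correct and is exactly the route the paper intends: the corollary is placed immediately after Theorem~\ref{viejo} with the phrase ``In particular we deduce the following,'' and the intended deduction is precisely the chain you wrote out---an NE-collapse of complexes is an ordinary collapse (via ``non-evasive $\Rightarrow$ collapsible''), Theorem~\ref{viejo} transports it to a collapse of finite spaces, and the Remark preceding Theorem~\ref{mainne} upgrades this to an NE-collapse on the finite-space side. You have simply made explicit what the paper leaves implicit.
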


\begin{teo} \label{nueva}
Let $Y$ be a subspace of a finite $T_0$-space $X$. Then, $X\searrow Y$ if and only if $\kp (X)\unoc \kp (Y)$. In particular $X$ is collapsible if and only if $\kp (X)$ is $1$-collapsible.
\end{teo}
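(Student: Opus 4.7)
The plan is to handle both implications by induction on the number of elementary moves, in each case reducing to a single elementary step. The common thread is the dictionary equating ``$x$ is a weak point of $X$'' with ``the link $lk_{\kp(X)}(x)=\kp(\hat{C}_x)$ is strong collapsible'', together with the identity $\kp(X)\smallsetminus x=\kp(X\smallsetminus\{x\})$, which holds because $\kp$ sends an induced subposet of $X$ to the full subcomplex of $\kp(X)$ spanned by that subset. Granted these, each elementary collapse of finite spaces matches an elementary $1$-collapse of complexes and vice versa, so induction on the length will close the proof.

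For the forward direction, I would take an elementary collapse $X\searrow X\smallsetminus\{x\}$. By definition $\hat{C}_x$ is contractible; Stong's theorem (Corollary \ref{corostong}) gives $\hat{C}_x\scp *$, and Theorem \ref{mainstrong}(a) upgrades this to $\kp(\hat{C}_x)\scp *$. Since that complex is exactly $lk_{\kp(X)}(x)$, the vertex $x$ has strong collapsible link in $\kp(X)$, and its deletion is an elementary $1$-collapse onto $\kp(X\smallsetminus\{x\})$. Iterating through the weak points witnessing $X\searrow Y$ produces $\kp(X)\unoc \kp(Y)$.

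For the converse, consider an elementary $1$-collapse $\kp(X)\unoc \kp(X)\smallsetminus v$, in which $lk_{\kp(X)}(v)=\kp(\hat{C}_v)$ is strong collapsible. By Theorem \ref{stronght}(b), $\x(\kp(\hat{C}_v))=(\hat{C}_v)'$ is homotopy equivalent to a point and hence contractible; then Corollary \ref{xprima} allows me to descend from the barycentric subdivision back to $\hat{C}_v$ itself, showing $v$ is a weak point of $X$, so $X\searrow X\smallsetminus\{v\}$. Iterating gives $X\searrow Y$, and the ``in particular'' clause is just the special case where $Y$ is a one-point subspace. The main obstacle will be precisely that downward step in the converse: a direct application of Theorem \ref{stronght}(b) yields only contractibility of the subdivision $(\hat{C}_v)'$, not of $\hat{C}_v$ itself, so the nontrivial content of Corollary \ref{xprima} is essential to close the loop.
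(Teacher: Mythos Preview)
Your proof is correct and follows essentially the same route as the paper: reduce to a single vertex deletion using $\kp(X)\smallsetminus x=\kp(X\smallsetminus\{x\})$, translate ``$x$ is a weak point'' into ``$\kp(\hat{C}_x)$ is strong collapsible'' via Theorem \ref{mainstrong} for the forward direction, and use Corollary \ref{xprima} to pass from contractibility of $(\hat{C}_v)'$ back to $\hat{C}_v$ for the converse. The only cosmetic difference is that the paper cites Theorem \ref{mainstrong}(b) rather than Theorem \ref{stronght}(b) to obtain contractibility of $(\hat{C}_v)'$, which amounts to the same thing here.
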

\begin{proof}
Since $\kp (X)\smallsetminus x =\kp (X\smallsetminus \{x\})$, it suffices to analize the case that $Y=X\smallsetminus \{x\}$ for some $x\in X$. There is a collapse from $X$ to $X\smallsetminus \{x\}$ if and only if $x\in X$ is a weak point, or in other words if $\hat{C}_x$ is contractible. But if $\hat{C}_x$ is contractible, then by Theorem \ref{mainstrong}, $lk_{\kp (X)}(x)=\kp (\hat{C}_x)$ is strong collapsible. Conversely, if $lk_{\kp (X)}(x)=\kp (\hat{C}_x)$ is strong collapsible, $(\hat{C}_x)'$ is contractible by Theorem \ref{mainstrong} and then $x\in X$ is a weak point by Theorem \ref{xprima}. Finally, the link of $x$ in $\kp (X)$ is strong collapsible if and only if $\kp (X)\unoc \kp (X\smallsetminus \{x\})$. 
\end{proof}

We can now derive the most important result of this section:

\begin{coro}
If a complex $K$ collapses to a subcomplex $L$, then $K'\unoc L'$. In particular if $K$ is collapsible, $K'$ is $1$-collapsible.
\end{coro}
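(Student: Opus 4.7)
The plan is to chain together the two translation results that have just been established, using the identity $K'=\kp(\x(K))$ (noted right before Theorem \ref{scysubdiv}). First I would invoke Theorem \ref{viejo}, which says that a simplicial collapse $K\searrow L$ passes through the face-poset functor to yield a collapse $\x(K)\searrow \x(L)$ in the category of finite $T_0$-spaces. This step is the content of the earlier work of \cite{Bar2} and requires no further argument here.

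Then I would feed this finite-space collapse into Theorem \ref{nueva}, which characterizes collapses of finite $T_0$-spaces in exactly the right way: $X\searrow Y$ is equivalent to $\kp(X)\unoc \kp(Y)$. Applied to $X=\x(K)$ and $Y=\x(L)$, this yields $\kp(\x(K))\unoc \kp(\x(L))$, i.e.\ $K'\unoc L'$, which is the first statement of the corollary. (Strictly speaking Theorem \ref{nueva} is stated for a one-point removal, but the general case follows immediately by iterating an elementary collapse, since each intermediate NE-reduction concatenates to give an overall $1$-collapse.)

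For the ``in particular'' clause, I would specialize $L$ to a single vertex. Then $L'=L$ is still a point, and the preceding statement reads $K'\unoc *$, which is by definition the assertion that $K'$ is $1$-collapsible.

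There is essentially no obstacle in this argument, as the heavy lifting has already been done in Theorems \ref{viejo} and \ref{nueva}; the only thing to be careful about is the (purely notational) identification of the barycentric subdivision of $K$ with $\kp(\x(K))$, and the observation that $\kp$ and $\x$ are applied to the same pair, so that the resulting $1$-collapse indeed ends at the correct subcomplex $L'$ of $K'$.
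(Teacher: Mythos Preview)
Your proof is correct and follows the paper's approach exactly: invoke Theorem \ref{viejo} to pass to a collapse of face posets, then Theorem \ref{nueva} to obtain the $1$-collapse of barycentric subdivisions via the identification $K'=\kp(\x(K))$. One minor note: Theorem \ref{nueva} is in fact stated for an arbitrary subspace $Y\subseteq X$ (its proof reduces to the one-point case), so your parenthetical caveat is unnecessary.
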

\begin{proof}
If $K\searrow L$, then by Theorem \ref{viejo} $\x (K)\searrow \x (L)$ and by Theorem \ref{nueva}, $K'=\kp( \x (K))\unoc \kp( \x (L))=L'$.
\end{proof}

In view of this result, it is equivalent to say that a compact polyhedron has a collapsible triangulation and that it has a $1$-collapsible triangulation. This equivalence could be helpful when studying problems of collapsibility, such as Zeeman's conjecture.

In particular one can deduce Welker's result  \cite[Thm. 2.10]{Wel}.

\begin{coro} [Welker] \label{welker}
Let $K$ be a collapsible complex. Then the barycentric subdivision $K'$ is non-evasive.
\end{coro}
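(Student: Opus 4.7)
The plan is to invoke the immediately preceding corollary and then unwind the definition of non-evasiveness, since virtually all of the work has already been done. If $K$ is collapsible then $K\searrow *$, and by the previous corollary this gives $K'\unoc *$; that is, $K'$ is $1$-collapsible.

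Now recall that in the terminology of this section a complex is non-evasive exactly when it is $n$-collapsible for some $n\ge 0$ (and we have already observed that $n$-collapsibility implies $(n+1)$-collapsibility, since simplicial cones are strong collapsible). In particular, every $1$-collapsible complex is non-evasive. Applying this to $K'$ yields the desired conclusion.

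No serious obstacle arises at this stage: the real content is packaged into Theorem~\ref{viejo} (a simplicial collapse $K\searrow L$ induces a finite-space collapse $\x(K)\searrow \x(L)$) and Theorem~\ref{nueva} (a finite-space collapse $X\searrow Y$ corresponds to a $1$-collapse $\kp(X)\unoc \kp(Y)$), whose combination is precisely the preceding corollary. Since $K'=\kp(\x(K))$, applying both theorems in turn to the given collapse $K\searrow *$ produces a $1$-collapse $K'\unoc *$, and the present corollary is then immediate from the definition of non-evasiveness. One may even view the statement as a strict strengthening of Welker's original result, inasmuch as it records the sharper fact that $K'$ is $1$-collapsible, not merely non-evasive.
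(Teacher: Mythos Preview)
Your proposal is correct and matches the paper's approach exactly: the paper simply states the corollary as an immediate consequence of the preceding result (that $K$ collapsible implies $K'$ is $1$-collapsible), and you have spelled out precisely this deduction together with the definition of non-evasiveness.
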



\begin{obs}
It is unknown whether the converse of Corollary \ref{welker} holds. However if $K'$ is not just non-evasive but strong collapsible, then by Theorem \ref{scysubdiv}, $K$ is strong collapsible and in particular collapsible.

The proof of the fact that if $K$ is collapsible then $K'$ is $1$-collapsible was made in two steps: $K\searrow * \Rightarrow \x (K) \searrow * \Rightarrow K'=\kp (\x (K))\unoc *$. In order to prove the converse, one could try to prove the converses of both implications. In fact, the second implication is an equivalence $\x (K) \searrow * \Leftrightarrow K'\unoc *$, by Theorem \ref{nueva}. 
\end{obs}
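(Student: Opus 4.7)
The final statement is the \emph{Remark} asserting two separate observations: (i) if $K'$ is strong collapsible then $K$ is strong collapsible, and hence collapsible; and (ii) the second step of the earlier proof, namely $\x(K)\searrow *\Rightarrow K'\unoc *$, is actually an equivalence. My plan is simply to \emph{invoke} the prior theorems cited in the remark and check that the specialization matches the claimed statements. No new combinatorial or topological content is needed; the work is just verifying that the general equivalences specialize correctly to ``strong collapsible to a point'' and ``collapsible to a point'' respectively.

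For assertion (i), I would argue as follows. Theorem \ref{scysubdiv} states the biconditional ``$K$ is strong collapsible $\Leftrightarrow$ $K'$ is strong collapsible''. Thus if $K'$ strong collapses to a point, applying the $(\Leftarrow)$ direction gives a strong collapse $K\scp *$. To pass from strong collapsibility to (classical) collapsibility, I would cite Remark \ref{weaker}: every elementary strong collapse $K\esc K\smallsetminus v$ is in particular a collapse $K\searrow K\smallsetminus v$, because the link of a dominated vertex is a simplicial cone, hence collapsible, which makes $st(v)\searrow lk(v)$ and therefore $K=st(v)+(K\smallsetminus v)\searrow K\smallsetminus v$. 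Concatenating these elementary collapses yields $K\searrow *$.

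For assertion (ii), I would specialize Theorem \ref{nueva} to the case in which $Y$ is a one-point subspace of $X$. Concretely, let $X=\x(K)$ and pick any vertex $v\in K$, viewed as a minimal element $\{v\}\in\x(K)$; set $Y=\{\{v\}\}\subseteq X$. Then $\kp(Y)$ is a single vertex of $K'=\kp(\x(K))$. Theorem \ref{nueva} gives
\[
\x(K)\searrow \{\{v\}\}\ \Longleftrightarrow\ K'=\kp(\x(K))\unoc \kp(\{\{v\}\}),
\]
which is exactly the equivalence $\x(K)\searrow * \Leftrightarrow K'\unoc *$ that is claimed. The only point worth checking is that ``$\x(K)\searrow *$'' in the remark indeed means a collapse to some single point, and this is automatic once one such point $\{v\}$ is fixed (any two one-point subspaces of a connected finite poset are in the same strong deformation class, but for the equivalence above we need only a single witness).

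The main potential obstacle is verbal rather than mathematical: one must be careful to match the three distinct notions of ``collapsing to a point'' appearing in the remark (strong collapse $\scp *$, classical collapse $\searrow *$, and $1$-collapse $\unoc *$) with the corresponding hypotheses of Theorems \ref{scysubdiv} and \ref{nueva}. Once this bookkeeping is set up, the remark reduces to two one-line citations, exactly as the authors indicate.
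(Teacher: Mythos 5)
Your proposal is correct and follows essentially the same route as the paper: the remark's content is precisely the two citations you make, namely the biconditional of Theorem \ref{scysubdiv} combined with Remark \ref{weaker} for the first assertion, and the specialization of Theorem \ref{nueva} to a one-point subspace (already recorded there as ``$X$ is collapsible if and only if $\kp(X)$ is $1$-collapsible'') for the second. Your bookkeeping about which notion of ``collapsing to a point'' is in play at each step is exactly the verification the authors leave implicit.
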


Of course, there is an analogous of Corollary \ref{welker} for finite spaces. If $X$ is a non-evasive finite space, $\kp (X)$ is non-evasive and then $X'$ is collapsible. If the converse of this statement is true, then any barycentric subdivision $K'$ which is collapsible, would also be non-evasive, and therefore the converse of \ref{welker} would be false.
 
\section{Vertex-homogeneous complexes}

A complex $K$ is said to be \textit{vertex-homogeneous} if the group of simplicial automorphisms $Aut(K)$ acts transitively on the vertices of $K$. In other words, $K$ is vertex-homogeneous if for any two vertices $v,w\in K$, there exists an automorphism $\varphi :K\to K$ such that $\varphi (v)=w$. For instance, the simplices are vertex-homogeneous simplicial complexes. There exist examples of contractible (even collapsible) vertex-homogeneous complexes which are not simplices (see \cite{Kah,Lut1}). The Evasiveness conjecture for simplicial complexes states that a non-evasive vertex-homogeneous simplicial complex is a simplex. This has been verified for complexes with a prime-power number of vertices, but the general question remains open.

In this section we will prove that the conjecture holds if the complex is strong collapsible (Corollary \ref{necsc}). We will also prove that  if $K$ is vertex-homogeneous, its core is vertex-homogeneous and it is isomorphic to $\nerve ^2 (K)$. Moreover, we will see that vertex-homogeneous complexes are isomorphic to an $n$-th multiple of their cores. From these results we will deduce that any vertex-homogeneous complex with a prime number of vertices is minimal or a simplex. Finally, we use the notions of collapses introduced in the previous section to prove that the core of a vertex-homogeneous non-evasive complex is also non-evasive. This result implies that the study of the Evasiveness conjecture can be reduced to the class of minimal complexes.

If $\varphi :K \to K$ is a simplicial automorphism of a contractible complex $K$, there is a point of $|K|$ which is fixed by $|\varphi|$. In fact, every continuous map $f:|K|\to |K|$ has a fixed point by the Lefschetz fixed-point theorem \cite{Spa}. The problem becomes harder when one studies fixed points of an automorhism group instead of a single automorphism. There exist examples of fixed point free actions of finite groups over compact contractible polyhedra \cite{Flo}. Furthermore, R. Oliver \cite{Oli} characterized the groups $G$ for which every simplicial action of $G$ over a finite contractible complex has a fixed point.

From a different perspective, we use a result of Stong \cite{Sto2}, to prove that if $K$ is a strong collapsible complex, then every simplicial action on $K$ has a fixed point.


\begin{teo}[Stong] \label{fixedstong}
If $X$ is a contractible finite $T_0$-space, there is a point $x\in X$ fixed by every homeomorphism of $X$.
\end{teo}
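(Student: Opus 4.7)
\emph{Plan.} I would prove by induction on $|X|$ the slightly stronger statement that if $G$ is any group acting on a contractible finite $T_0$-space $X$ by homeomorphisms, then $X$ has a point fixed by every element of $G$. The base case $|X|=1$ is immediate, and Stong's theorem follows by taking $G=\mathrm{Homeo}(X)$.

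For the inductive step assume $|X|>1$. Since $X$ is contractible, its core is a point, so $X$ itself is not minimal and hence admits a beat point. By replacing $X$ by its opposite order if necessary, I may assume that there is a down beat point $x_0$ with $y_0=\max(\hat{U}_{x_0})$; set $O=G\cdot x_0$.

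The key observation is that the elements of $O$ are pairwise incomparable: if $g(x_0)<x_0$ for some $g\in G$, then $x_0>g(x_0)>g^2(x_0)>\cdots$ would be an infinite strictly descending chain in the finite poset $X$, which is absurd. Consequently $y_0\notin O$ (since $y_0<x_0$), so by equivariance the orbit $G\cdot y_0$ is disjoint from $O$, and each $gx_0\in O$ is a down beat point of $X$ dominated by $gy_0$. I would then enumerate $O=\{x^{(1)},\ldots,x^{(k)}\}$ with $x^{(i)}=g_i(x_0)$ and delete these points in order. Incomparability inside $O$ guarantees that $\hat{U}_{x^{(i+1)}}$ is unaffected by the earlier removals, while the dominator $g_{i+1}(y_0)$ remains present because $G\cdot y_0\cap O=\emptyset$; hence each deletion is an elementary strong collapse and $X\scp X\smallsetminus O$.

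Therefore $X\smallsetminus O$ is contractible, is $G$-invariant, and has strictly fewer points than $X$, so the inductive hypothesis produces the desired common fixed point inside $X\smallsetminus O\subseteq X$. The main obstacle is precisely the passage from a single beat point to a $G$-equivariant strong collapse; it rests entirely on the incomparability of orbit elements and the disjointness of $G\cdot y_0$ from $G\cdot x_0$, both of which are consequences of the finiteness of $X$.
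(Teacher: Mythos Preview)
The paper does not supply its own proof of this theorem; it is quoted as a result of Stong with a reference to \cite{Sto2} and then used as a black box in the proof of Theorem~\ref{equivariantlefschetz}. Your argument is correct and is essentially Stong's original equivariant-reduction proof: pick a beat point, observe that its $G$-orbit is an antichain (so the dominating elements survive each deletion and the successive links $\hat{U}$ are unchanged), strong-collapse the orbit out, and induct on $|X|$. The passage to $X^{\mathrm{op}}$ is legitimate because beat points of $X$ and $X^{\mathrm{op}}$ coincide (with up/down swapped), so $X$ is contractible iff $X^{\mathrm{op}}$ is, and order automorphisms of $X$ are exactly those of $X^{\mathrm{op}}$; the appeal to Corollary~\ref{corostong} to produce a beat point when $|X|>1$ is also in order. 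There is nothing to compare against in the paper itself.
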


\begin{teo} \label{equivariantlefschetz}
Let $K$ be a strong collapsible complex and let $G$ be a group of simplicial automorphisms of $K$. Then there exists a point $x\in |K|$ such that $|\varphi|(x)=x$ for every $\varphi \in G$.
\end{teo}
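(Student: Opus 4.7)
My plan is to reduce the problem to the associated finite $T_0$-space $\x(K)$ and then apply Stong's fixed point theorem (Theorem \ref{fixedstong}). The key observation is that $G$ acts not only on $K$ but also on the poset $\x(K)$: each $\varphi \in G$ induces an order-preserving bijection $\x(\varphi): \x(K)\to \x(K)$ sending $\sigma \mapsto \varphi(\sigma)$, and by functoriality of $\x$ this is a homeomorphism of the finite space $\x(K)$. So the hypothesis provides a group of homeomorphisms of $\x(K)$.

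Since $K$ is strong collapsible, Theorem \ref{mainstrong}(b) yields $\x(K) \scp *$, and therefore $\x(K)$ is contractible. By Theorem \ref{fixedstong}, there exists a single point $\sigma_0 \in \x(K)$ fixed by \emph{every} homeomorphism of $\x(K)$; in particular, $\varphi(\sigma_0) = \sigma_0$ as a simplex of $K$ for every $\varphi \in G$. Consequently, the closed simplex $|\sigma_0| \subseteq |K|$ is preserved setwise by every $|\varphi|$, and the restriction of $|\varphi|$ to $|\sigma_0|$ is the unique affine self-map of the simplex permuting its vertices according to $\varphi$. Taking $x$ to be the barycenter of $\sigma_0$, which is the average of its vertices, $x$ is invariant under every permutation of the vertex set, and hence $|\varphi|(x) = x$ for every $\varphi \in G$.

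The decisive step is the appeal to Stong's theorem, which supplies a \emph{simultaneous} fixed point of the full homeomorphism group of the contractible finite space $\x(K)$, rather than merely a separate fixed point for each homeomorphism; without this it would not be clear that the group action on $|K|$ admits a common fixed point. Once that is in hand, promoting a $G$-invariant simplex to a $G$-fixed point of $|K|$ via its barycenter is straightforward by convexity, so I do not expect any further obstacle.
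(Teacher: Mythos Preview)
Your proof is correct and follows essentially the same approach as the paper: pass to the contractible finite space $\x(K)$, apply Stong's theorem to obtain a simplex $\sigma_0$ fixed by all of $G$, and take the barycenter. The paper's argument is identical in structure, only more tersely stated.
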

\begin{proof}
The action of $G$ on $K$ induces an action of $G$ on the contractible space $\x(K)$. By Theorem \ref{fixedstong} there exists a point $\sigma \in \x (K)$ which is fixed by the action of $G$. Therefore, the simplex $\sigma$ is fixed by the automorphisms of $G$ and then, the barycenter of $\sigma$ is a fixed point by the action.
\end{proof}

\begin{obs}
The fact of being the group of automorphisms of a strong collapsible complex does not impose any restriction on the group. In other words, every finite group is the automorphism group of some strong collapsible complex. Given a finite group $G$, there exists a complex $K$ such that $Aut(K)\simeq G$ (see \cite{Fru}). If $K$ is not a cone, then the simplicial cone $aK$ with base $K$ is a strong collapsible complex with automorphism group isomorphic to $G$.  
\end{obs}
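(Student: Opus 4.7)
The plan is to turn a classical theorem of Frucht (every finite group is the automorphism group of a finite graph/complex, cf.~\cite{Fru}) into a strong collapsible realization via a single coning operation. The construction is essentially the one suggested by the remark, but it requires a bit of care to avoid the degenerate case in which the Frucht realization is itself a cone (which would create extra automorphisms after coning).

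First, given a finite group $G$, I invoke Frucht's theorem to obtain a finite simplicial complex $K$ (one may take $K$ to be a graph, i.e.~$1$-dimensional) with $\mathrm{Aut}(K)\simeq G$. For $|G|$ sufficiently large, the standard Frucht construction already produces a $3$-regular graph with at least five vertices, which cannot be a cone. The finitely many small cases (the trivial group, $\mathbb{Z}/2$, $\mathbb{Z}/3$, $S_3$, etc.) are handled by exhibiting explicit non-cone realizations by hand, or by replacing $K$ with a suitable disjoint union $K\sqcup K_0$ where $K_0$ is a rigid non-cone complex, so that the automorphism group is preserved and the resulting complex is not a cone. Thus we may assume $K$ is not a simplicial cone.

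Next, let $a$ be a new vertex not in $K$ and form the simplicial cone $L=aK$. I claim $L$ is strong collapsible: for every vertex $v\in K$, $lk_L(v)=a\cdot lk_K(v)$ is a simplicial cone with apex $a$, so $v$ is dominated in $L$ and $L\esc L\smallsetminus v=a(K\smallsetminus v)$, which is again a cone over a smaller complex. Iterating this elementary strong collapse eliminates all vertices of $K$ one at a time, reducing $L$ to the single vertex $a$. Hence $L\scp *$, so $L$ is strong collapsible.

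Finally I check $\mathrm{Aut}(L)\simeq \mathrm{Aut}(K)\simeq G$. Since $K$ is not a cone, the vertex $a$ is the unique vertex of $L$ whose link is the whole of $L\smallsetminus a$ (equivalently, the unique vertex belonging to every maximal simplex of $L$); any other vertex $v\in K$ would have this property in $L$ exactly when $v$ is a universal vertex of $K$, contradicting the assumption. Consequently every automorphism of $L$ fixes $a$ and therefore restricts to an automorphism of $K$, giving a natural injection $\mathrm{Aut}(L)\hookrightarrow \mathrm{Aut}(K)$. Conversely, every automorphism of $K$ extends to $L$ by fixing $a$, so $\mathrm{Aut}(L)\simeq \mathrm{Aut}(K)\simeq G$.

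The main obstacle, and the only nontrivial bookkeeping, is ensuring that the Frucht realization can be chosen so as not to be a simplicial cone; once that mild normalization is in place, both the strong collapsibility of $aK$ and the computation of its automorphism group are immediate from the definitions.
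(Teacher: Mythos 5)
Your proposal is correct and follows essentially the same route as the paper's remark: take a Frucht realization $K$ with $\mathrm{Aut}(K)\simeq G$, cone it, observe that every vertex of $K$ is dominated by the apex $a$ (so $aK\scp a$), and note that since $K$ is not a cone, $a$ is the unique vertex lying in every maximal simplex of $aK$, hence is fixed by all automorphisms, giving $\mathrm{Aut}(aK)\simeq \mathrm{Aut}(K)$. Your additional care in arranging that the Frucht complex is not itself a cone is a worthwhile detail that the paper only handles implicitly via the hypothesis ``if $K$ is not a cone.''
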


From Theorem \ref{equivariantlefschetz}, we deduce that the Evasiveness conjecture is true for strong collapsible simplicial complexes.

\begin{coro} \label{necsc}
Let $K$ be a strong collapsible vertex-homogeneous complex. Then $K$ is a simplex.
\end{coro}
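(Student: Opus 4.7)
The plan is to apply Theorem \ref{equivariantlefschetz} directly to the full automorphism group $G = Aut(K)$ acting on $K$, and then exploit the transitivity of this action together with the fact that a fixed point of a simplicial action forces every automorphism to preserve a single simplex.

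First, since $K$ is strong collapsible and $G = Aut(K)$ acts on $K$ by simplicial automorphisms, Theorem \ref{equivariantlefschetz} produces a point $x \in |K|$ with $|\varphi|(x) = x$ for every $\varphi \in G$. Let $\sigma \in K$ be the unique simplex whose open interior contains $x$. Any $\varphi \in G$ maps the open simplex containing $x$ homeomorphically onto the open simplex containing $|\varphi|(x) = x$; by uniqueness these coincide, so $\varphi(\sigma) = \sigma$ for every $\varphi \in G$. In particular every simplicial automorphism of $K$ permutes the vertex set $V(\sigma)$ of $\sigma$.

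Next I use vertex-homogeneity. Pick any vertex $v \in K$ and any vertex $w \in V(\sigma)$. By vertex-homogeneity there is $\varphi \in G$ with $\varphi(w) = v$. Since $\varphi$ stabilises $V(\sigma)$, we obtain $v \in V(\sigma)$. Hence every vertex of $K$ lies in $V(\sigma)$, i.e.\ $\sigma$ contains all vertices of $K$. Because $\sigma$ is a simplex of $K$, every subset of $V(\sigma)$ is a simplex of $K$; conversely every simplex of $K$ has its vertices in $V(\sigma)$ and is therefore a face of $\sigma$. Thus $K$ coincides with the full simplex on $V(\sigma)$, and $K$ is a simplex as claimed.

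The only potentially delicate point is the step asserting that a fixed point of the geometric realisation forces a simplex-wise fixed simplex; but this is the standard fact that the open simplices partition $|K|$, combined with the fact that $|\varphi|$ sends open simplices to open simplices bijectively. So the entire argument is essentially a one-line deduction from Theorem \ref{equivariantlefschetz} together with vertex-transitivity, and there is no serious obstacle.
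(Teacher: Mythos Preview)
Your proof is correct and follows exactly the same approach as the paper's own proof: apply Theorem \ref{equivariantlefschetz} to $Aut(K)$, observe that the support $\sigma$ of the fixed point is $Aut(K)$-invariant, and use vertex-transitivity to conclude that $\sigma$ contains every vertex of $K$. The paper states this in two sentences while you spell out the details (in particular the open-simplex argument for why $\varphi(\sigma)=\sigma$), but the content is identical.
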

\begin{proof}
By Theorem \ref{equivariantlefschetz} there exists a point $x\in |K|$ fixed by $Aut(K)$. Therefore, the support of $x$ is a fixed simplex, and since $K$ is vertex-homogeneous, this simplex must contain all the vertices of $K$. 
\end{proof}

We will see another proof of this result as an application of the study of the square-nerve $\nerve ^2(K)$ of a vertex-homogeneous complex $K$.

Let $K$ be a complex and $v,w$ vertices of $K$. We denote $v\prec w$ if every maximal simplex of $K$ which contains $v$, also contains $w$. In other words, $v\prec w$ if $v$ is dominated by $w$ or if $v=w$. Clearly $\prec$ is a preorder on the set of vertices of $K$.

\begin{obs} \label{morfismo}
Let $v,w$ be vertices of a complex $K$ and let $\varphi \in Aut(K)$. If $v\prec w$, $\varphi (v)\prec \varphi (w)$. This follows immediately from the fact that $\varphi$ induces a correspondence between maximal simplices containing $v$ and maximal simplices containing $\varphi(v)$.
\end{obs}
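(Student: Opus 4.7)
The plan is to verify this by a direct argument using the fact that a simplicial automorphism $\varphi$ together with its inverse $\varphi^{-1}$ induces an order-preserving bijection on the poset of simplices of $K$, and in particular restricts to a bijection on the set of maximal simplices of $K$.

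First I would dispose of the trivial case $v=w$: then $\varphi(v)=\varphi(w)$, so $\varphi(v)\prec\varphi(w)$ by reflexivity of $\prec$. So assume $v\neq w$ and $v$ is dominated by $w$, meaning every maximal simplex containing $v$ also contains $w$. Note that since $\varphi$ is a bijection on vertices, $\varphi(v)\neq\varphi(w)$.

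Next I would establish the key observation that $\varphi$ sends maximal simplices to maximal simplices. Indeed, if $\sigma$ is a simplex of $K$ and $\varphi(\sigma)\subsetneq\tau$ for some simplex $\tau\in K$, then applying $\varphi^{-1}$ (which is simplicial since $\varphi$ is an automorphism) yields $\sigma\subsetneq\varphi^{-1}(\tau)$, and $\varphi^{-1}(\tau)$ is a simplex of $K$. Hence $\sigma$ maximal implies $\varphi(\sigma)$ maximal, and the same argument applied to $\varphi^{-1}$ shows the converse.

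Finally I would conclude as follows. Let $\sigma$ be any maximal simplex of $K$ containing $\varphi(v)$. By the previous step applied to $\varphi^{-1}$, the simplex $\varphi^{-1}(\sigma)$ is a maximal simplex of $K$, and it contains $\varphi^{-1}(\varphi(v))=v$. Since $v\prec w$, we get $w\in\varphi^{-1}(\sigma)$, so $\varphi(w)\in\sigma$. As $\sigma$ was arbitrary, every maximal simplex containing $\varphi(v)$ also contains $\varphi(w)$, which is the definition of $\varphi(v)\prec\varphi(w)$. There is no real obstacle here; the statement is essentially a functoriality check, and the only thing to be slightly careful about is that the domination relation is defined via \emph{maximal} simplices, which is precisely the collection that $\varphi$ and $\varphi^{-1}$ preserve.
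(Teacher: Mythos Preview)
Your proof is correct and follows exactly the approach indicated in the paper: the Remark's justification is the single observation that an automorphism $\varphi$ induces a bijection between the maximal simplices containing $v$ and those containing $\varphi(v)$, and you have simply spelled out the details of this immediate check.
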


\begin{prop}
If $K$ is vertex-homogeneous, $\prec$ is an equivalence relation and all the equivalence classes have the same cardinality.
\end{prop}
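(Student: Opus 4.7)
The plan is to first establish symmetry of $\prec$ (we already have reflexivity and transitivity, since $\prec$ is a preorder), and then deduce that all equivalence classes have equal size using Remark \ref{morfismo}.

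For symmetry, suppose $v \prec w$. Since $K$ is vertex-homogeneous, there exists $\varphi \in Aut(K)$ with $\varphi(v) = w$. Applying Remark \ref{morfismo} to $v \prec w$ we obtain $w = \varphi(v) \prec \varphi(w)$, and iterating gives an infinite chain
\[
v \prec w \prec \varphi(w) \prec \varphi^2(w) \prec \varphi^3(w) \prec \cdots .
\]
Because $K$ is finite, $Aut(K)$ is a subgroup of the (finite) symmetric group on the vertex set, so $\varphi$ has finite order $N \ge 1$. Then $\varphi^{N}= 1_K$, and in particular $\varphi^{N-1}(w) = \varphi^{-1}(w) = v$. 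Thus the chain closes up as
\[
v \prec w \prec \varphi(w) \prec \cdots \prec \varphi^{N-1}(w) = v,
\]
and transitivity yields $w \prec v$. This proves $\prec$ is symmetric, hence an equivalence relation.

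For the cardinality statement, denote by $[v]$ the equivalence class of a vertex $v$. By Remark \ref{morfismo}, any $\varphi \in Aut(K)$ preserves $\prec$ and therefore sends equivalence classes to equivalence classes; as $\varphi$ is a bijection on the vertex set, $|\varphi([v])| = |[v]|$. Given two classes $[v]$ and $[w]$, pick $\varphi \in Aut(K)$ with $\varphi(v) = w$ (available by vertex-homogeneity). Then $w \in \varphi([v]) \cap [w]$, so $\varphi([v]) = [w]$, giving $|[v]| = |[w]|$.

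The main obstacle is the symmetry argument; the essential new ingredient (beyond Remark \ref{morfismo}) is the observation that the finiteness of $K$ forces every automorphism to have finite order, which lets a one-sided relation $v \prec w$ close into a cycle and yield the reverse inequality. Once this is in place, the equicardinality statement is an immediate consequence of transporting classes by automorphisms.
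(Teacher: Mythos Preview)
Your proof is correct and follows essentially the same approach as the paper: for symmetry you iterate the automorphism $\varphi$ sending $v$ to $w$ to build a $\prec$-chain that closes up by finiteness, and for equicardinality you transport classes via automorphisms using Remark \ref{morfismo}. The only cosmetic differences are that the paper phrases the closing-up in terms of the finite orbit $\varphi^n(v)=v$ rather than the global order of $\varphi$, and writes out the bijection $[v]\to[v']$ and its inverse explicitly rather than invoking that distinct classes are disjoint.
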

\begin{proof}
Suppose $v\prec w$. Let $\varphi \in Aut (K)$ such that $\varphi (v)=w$. By Remark \ref{morfismo}, $$v\prec \varphi (v) \prec \varphi ^2 (v) \prec \ldots .$$
Since $K$ is finite and $\varphi$ is one-to-one on vertices, there exists $n\ge 1$ such that $\varphi ^n (v)=v$. Then, $w=\varphi (v)\prec \varphi ^n (v)=v$. Thus, $\prec$ is symmetric and then an equivalence.

For the second part, suppose $[v]$ is the equivalence class of a vertex $v$ and $[v']$ is the class of another vertex $v'$. Let $\psi$ be an automorphism such that $\psi (v)=v'$. By Remark \ref{morfismo}, $\psi$ induces a well defined map from $[v]$ to $[v']$ and $\psi ^{-1}$ a map $[v']\mapsto [v]$. These are mutually inverse, and then $[v]$ and $[v']$ have the same cardinality.  
\end{proof}

\begin{obs} \label{m2vh}
If $K$ is vertex-homogeneous, $\nerve ^2 (K)$ is isomorphic to any full subcomplex of $K$ spanned by a set of vertices constituted by one element of each class of $\prec$.
By the proof of Proposition \ref{aprop11},  $\nerve ^2 (K)$ is isomorphic to any full subcomplex of $K$ spanned by a set of vertices, one $v\in \bigcap\limits_{i=0}^{r}\sigma _i$ for each maximal intersecting family $\{\sigma_0, \sigma_1, \ldots , \sigma_r\}$ of maximal simplices of $K$. On the other hand, each one of the intersections $\bigcap\limits_{i=0}^{r}\sigma _i$ is an equivalence class of $\prec$ and viceversa. 
\end{obs}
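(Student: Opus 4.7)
The plan is to invoke Proposition \ref{aprop11}, which already identifies $\nerve^2(K)$ with a full subcomplex of $K$ obtained by picking, for each maximal intersecting family $\Sigma=\{\sigma_0,\ldots,\sigma_r\}$ of maximal simplices, one vertex from $\bigcap_{i=0}^r \sigma_i$. The remaining task is to show that, when $K$ is vertex-homogeneous, the intersections $\bigcap_{i=0}^r \sigma_i$ are precisely the equivalence classes of $\prec$; then choosing one representative from each such intersection is the same thing as choosing one representative from each equivalence class.

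For the first direction, fix a maximal intersecting family $\Sigma$ with intersection $A=\bigcap_{i=0}^r\sigma_i$ and a vertex $v\in A$. If $\tau$ is any maximal simplex containing $v$, then $\Sigma\cup\{\tau\}$ is still intersecting (all its members contain $v$), so the maximality of $\Sigma$ forces $\tau\in\Sigma$. Hence the maximal simplices containing $v$ are exactly the elements of $\Sigma$, and $w\in A$ precisely when $w$ lies in every maximal simplex containing $v$, i.e., when $v\prec w$. Thus $A=[v]$.

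For the converse, given a vertex $v$, let $\Sigma_v$ denote the set of all maximal simplices containing $v$. Then $\Sigma_v$ is intersecting and $\bigcap\Sigma_v=\{w\mid v\prec w\}=[v]$. To see that $\Sigma_v$ is maximal, consider a maximal simplex $\tau\notin\Sigma_v$, so that $v\notin\tau$; if $\tau\cap\bigcap\Sigma_v\neq\emptyset$, then some $w\in[v]$ lies in $\tau$, so in particular $w\prec v$ (this is the step that genuinely uses the symmetry of $\prec$ available under vertex-homogeneity), whence the maximal simplex $\tau$ containing $w$ must contain $v$, a contradiction. The assignments $\Sigma\mapsto \bigcap\Sigma$ and $[v]\mapsto \Sigma_v$ are therefore mutually inverse, and substituting this bijection into the construction of Proposition \ref{aprop11} yields the remark. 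The only real subtlety is the maximality of $\Sigma_v$, which relies on vertex-homogeneity precisely through the symmetry of $\prec$; everything else is bookkeeping.
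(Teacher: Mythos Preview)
Your argument is correct and is exactly the fleshing-out the paper leaves implicit: the remark itself merely asserts that the intersections $\bigcap_{i=0}^r\sigma_i$ coincide with the $\prec$-classes, and you supply the two-line verification by showing that for $v\in\bigcap\Sigma$ one has $\Sigma=\Sigma_v$ (maximality of $\Sigma$) and that $\Sigma_v$ is itself maximal (symmetry of $\prec$). One cosmetic point: in your first direction you pass from $A=\{w:v\prec w\}$ to $A=[v]$ without comment, which again uses the symmetry of $\prec$ under vertex-homogeneity; you invoke this explicitly only in the second direction, so it would read more cleanly to mention it once at the outset.
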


\begin{teo} \label{m2eselcore}
Let $K$ be a vertex-homogeneous complex. Then, the core of $K$ is vertex-homogeneous and it is isomorphic to $\nerve ^2 (K)$. 
\end{teo}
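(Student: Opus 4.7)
The plan is to produce a specific subcomplex $L \subseteq K$ isomorphic to $\nerve^2(K)$, show directly that $L$ is both minimal and vertex-homogeneous, and then invoke uniqueness of cores to conclude. By Proposition \ref{aprop11} there is such an $L$ with $K \scp L$, and by Remark \ref{m2vh} I may take $L$ to be the full subcomplex of $K$ spanned by a set of representatives, one from each $\prec$-equivalence class of vertices of $K$.

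The starting technical observation is that vertex-homogeneity forces every maximal simplex $\sigma$ of $K$ to be a union of $\prec$-classes: if $v \in \sigma$ and $u \sim v$, then the symmetric relation $u \prec v$, $v \prec u$ immediately gives $u \in \sigma$. From this I can identify the maximal simplices of $L$ with the sets $\sigma \cap L$ for $\sigma$ maximal in $K$. Indeed, any simplex $\tau$ of $L$ lies in some maximal simplex $\sigma$ of $K$, so $\tau \subseteq \sigma \cap L$; conversely, if $\sigma \cap L \subseteq \tau'$ with $\tau'$ a simplex of $L$, then $\tau'$ extends to a maximal simplex $\sigma'$ of $K$ that contains one representative from each class represented in $\sigma$, and since $\sigma'$ is a union of classes this forces $\sigma' \supseteq \sigma$, whence $\sigma' = \sigma$ by maximality and $\tau' = \sigma \cap L$.

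Minimality of $L$ now comes out almost for free. If $v, w \in L$ were distinct vertices with $v$ dominated by $w$ in $L$, the above description would show that every maximal simplex of $K$ containing $v$ also contains $w$, i.e.\ $v \prec w$, hence $v \sim w$. But $L$ contains only one representative of each class, contradicting $v \neq w$. Therefore $L$ is a minimal subcomplex with $K\scp L$, hence a core of $K$, and by uniqueness of cores up to isomorphism the core of $K$ is isomorphic to $\nerve^2(K)$.

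For vertex-homogeneity of $L$, given $v, w \in L$ I choose $\varphi \in Aut(K)$ with $\varphi(v) = w$ and define $\psi \colon L \to L$ by sending each vertex $u \in L$ to the unique representative of $L$ lying in the class $[\varphi(u)]$. Then $\psi(v) = w$, since $w$ is itself the representative of its own class, and $\psi$ is a bijection on vertices because $\varphi$ permutes the $\prec$-classes (Remark \ref{morfismo}). To verify that $\psi$ is simplicial, take a simplex $\tau$ of $L$, lift it to a maximal simplex $\sigma$ of $K$ containing $\tau$, and note that $\varphi(\sigma)$ is then a maximal simplex of $K$ which, being a union of classes, contains $\psi(u) \in [\varphi(u)]$ for every $u \in \tau$; thus $\psi(\tau) \subseteq \varphi(\sigma) \cap L$ is a simplex of $L$. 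The same argument applied to $\varphi^{-1}$ produces an inverse for $\psi$, so $\psi \in Aut(L)$. I expect the main technical hinge to be the class-union property of maximal simplices under vertex-homogeneity; once that is pinned down, both the minimality of $L$ and the construction of $\psi$ are routine bookkeeping.
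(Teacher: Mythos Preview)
Your proof is correct and follows essentially the same route as the paper: both take $L$ to be the full subcomplex on a set of $\prec$-class representatives, establish that the maximal simplices of $L$ are precisely the sets $\sigma\cap L$ for $\sigma$ maximal in $K$, and deduce minimality from this. The only cosmetic difference is that you prove vertex-homogeneity on the model $L$ via $\psi(u)=\text{rep}[\varphi(u)]$, whereas the paper proves it on the model $\nerve^2(K)$ via the induced map on maximal intersecting families; these are the same automorphism transported across the isomorphism of Remark~\ref{m2vh}.
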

\begin{proof}
Let $L$ be a full subcomplex of $K$ with one vertex in each equivalence class of $\prec$. Since $K \scp L$ and $L$ is isomorphic to $\nerve ^2 (K)$, we only need to prove that $L$ is minimal. First we claim that if $\sigma$ is a maximal simplex of $K$, $\sigma \cap L$ is a maximal simplex of $L$.

Let $\tau$ be a simplex of $L$ containing $\sigma \cap L$ and let $\nu$ be a maximal simplex of $K$ containing $\tau$. Let $v\in \sigma$. Then there exists $v'\in L$ with $v\prec v'$. Since $v\in \sigma$, then $v'\in \sigma$ and therefore $v'\in \sigma \cap L \subseteq \nu$. Since $\nu$ is maximal in $K$ and $v'\prec v$, $v\in \nu$. This proves that $\sigma \subseteq \nu$ and by the maximality of $\sigma$, $\sigma=\nu$. Thus, $\tau \subseteq \sigma$ and then $\tau =\tau \cap L \subseteq \sigma \cap L$. This proves that $\sigma  \cap L$ is maximal in $L$.

Now, suppose that $v$ and $w$ are two vertices of $L$ and that $v \prec w$ in $L$. Let $\sigma$ be a maximal simplex of $K$ which contains $v$. Then $\sigma \cap L$ is maximal in $L$ and therefore $w\in \sigma \cap L \subseteq \sigma$. Therefore $v\prec w$ in $K$ and since both vertices lie in $L$, $v=w$. Hence, $L$ has no dominated vertices, which says that it is minimal.

It only remains to prove the vertex-homogeneity of $\nerve ^2 (K)$. An automorphism $\varphi :K\to K$ induces an automorphism $\overline{\varphi}:\nerve ^2 (K)\to \nerve ^2 (K)$. If $\{\sigma_0, \sigma_1, \ldots , \sigma_r\}$ is a maximal intersecting family of maximal simplices of $K$, so is $\{\varphi(\sigma_0), \varphi(\sigma_1), \ldots , \varphi(\sigma_r)\}$. This defines a vertex map $\overline{\varphi}: \nerve ^2 (K) \to \nerve ^2 (K)$ which is clearly simplicial with inverse $\overline{\varphi ^{-1}}$.

Suppose $\Sigma _1=\{\sigma_0, \sigma_1, \ldots , \sigma_r\}$ and $\Sigma _2=\{\tau_0, \tau_1, \ldots , \tau_s\}$ are two maximal intersecting families of maximal simplices of $K$. Take $v\in  \bigcap\limits_{i=0}^{r}\sigma _i$, $w\in  \bigcap\limits_{i=0}^{s}\tau _i$ and an automorphism $\varphi$ of $K$ such that $\varphi (v)=w$. Then $\overline{\varphi} (\Sigma _1)=\Sigma _2$, so $\nerve ^2 (K)$ is vertex-homogeneous.
\end{proof}

The fact that the square nerve preserves vertex-homogeneity can also be deduced immediately from \cite[Lemma 10]{Lut1}. 

We adopt Lutz's terminology \cite{Lut1} and call the \textit{$n$-th multiple} of a complex $K$ the complex $nK$ whose vertex set is $V_K\times \{1,2, \ldots , n\}$ and whose simplices are the sets whose projections to $V_K$ are simplices of $K$. In other words, $nK$ is the simplicial product of $K$ with an $(n-1)$-simplex. Notice that if $K$ is vertex-homogeneous, so is $nK$ for every $n\ge 1$.

\begin{prop} \label{multi}
Let $K$ be a vertex-homogeneous complex. Then, there exists $n\ge 1$ such that $K$ is isomorphic to the $n$-th multiple of its core.
\end{prop}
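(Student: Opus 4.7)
The plan is to construct an explicit isomorphism from $K$ onto $nL$, where $L$ is the core of $K$ and $n$ is the common size of the equivalence classes of the preorder $\prec$. By Theorem~\ref{m2eselcore} together with Remark~\ref{m2vh}, we may realize $L$ as a full subcomplex of $K$ obtained by selecting one vertex from each equivalence class of $\prec$. Since all classes of $\prec$ have the same cardinality, say $n$, for every $v\in L$ the fiber of the natural ``representative'' map $\pi:V_K\to V_L$ (sending each $w\in V_K$ to the unique vertex of $L$ equivalent to $w$) has exactly $n$ elements. Choose arbitrary bijections $\pi^{-1}(v)\to\{v\}\times\{1,\ldots,n\}$ for every $v\in L$, and assemble them into a single bijection $\Phi:V_K\to V_L\times\{1,\ldots,n\}$ covering $\pi$.

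The heart of the proof is to verify the following:

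\begin{quote}
A set $\sigma\subseteq V_K$ is a simplex of $K$ if and only if $\pi(\sigma)$ is a simplex of $L$.
\end{quote}

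For the forward direction, if $\sigma\in K$, pick a maximal simplex $\mu$ of $K$ with $\sigma\subseteq\mu$. For each $w\in\sigma$ we have $w\prec\pi(w)$, so $\pi(w)\in\mu$; hence $\pi(\sigma)\subseteq\mu\cap L$, and by the argument used in the proof of Theorem~\ref{m2eselcore} the intersection $\mu\cap L$ is a simplex of $L$. For the reverse direction, suppose $\pi(\sigma)\in L$, and let $\mu$ be a maximal simplex of $K$ containing $\pi(\sigma)$. For each $w\in\sigma$, vertex-homogeneity makes $\prec$ symmetric, so $\pi(w)\prec w$, i.e.\ every maximal simplex containing $\pi(w)$ contains $w$. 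Thus $w\in\mu$ for all $w\in\sigma$, whence $\sigma\subseteq\mu$ is a simplex of $K$.

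Once this equivalence is established, the conclusion is immediate: a subset of $V_K$ is a simplex of $K$ if and only if its image under $\Phi$ in $V_L\times\{1,\ldots,n\}$ projects to a simplex of $L$, which is precisely the defining condition for simplices of $nL$. Consequently $\Phi$ is a simplicial isomorphism $K\cong nL$.

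The only conceptual obstacle is the reverse direction of the boxed equivalence, since it fails without vertex-homogeneity (without it, $v\prec w$ does not imply $w\prec v$, so knowing that the representatives sit in a common maximal simplex tells one nothing about the elements of their classes). This is why the statement cannot be extended beyond vertex-homogeneous complexes; everything else is bookkeeping with the already established structure of $\prec$.
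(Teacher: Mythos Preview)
Your proof is correct and follows essentially the same route as the paper's: both realize the core as a full subcomplex $L$ by choosing one representative per $\prec$-class, assemble a bijection $V_K\to V_L\times\{1,\ldots,n\}$ from arbitrary bijections on the fibers, and verify that it and its inverse are simplicial by passing to a maximal simplex and using the symmetry of $\prec$ (i.e.\ both $w\prec\pi(w)$ and $\pi(w)\prec w$). The only cosmetic difference is that you phrase the heart of the argument as the equivalence ``$\sigma\in K$ iff $\pi(\sigma)\in L$'', whereas the paper checks the two simpliciality conditions separately; the content is identical.
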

\begin{proof}
Let $L$ be a full subcomplex of $K$ containing one vertex of each equivalence class of $\prec$ and let $n$ be the number of elements of each class. For every $v\in L$, let $\varphi_v:[v]\to \{v\}\times \{1,2, \ldots, n \}$ be a bijection. We claim that the vertex bijective map $\varphi : V_K\to V_{nL}$ induced by the maps $\varphi _v$ is a simplicial isomorphism between $K$ and $nL$.

Let $p:nL\to L$ be the canonical projection. Let $\sigma \in K$. If $\tau$ is a maximal simplex of $K$ which contains $\sigma$ and $v\in p(\varphi(\sigma))$, there exists $v'\in [v]$ such that $v'\in \sigma$. Since $v'\prec v$, $v\in \tau$. Therefore, $p(\varphi(\sigma))\subseteq \tau$ is a simplex, which proves that $\varphi$ is simplicial.

Now if $\sigma \in nL$, $p(\sigma)\in K$. Let $\tau$ be a maximal simplex of $K$ which contains $p(\sigma)$. If $v\in \varphi ^{-1}(\sigma)$, there exists $v'\in [v]$ such that $v'\in p(\sigma)$. Since $v'\prec v$, $v\in \tau$. Thus, $\varphi ^{-1}(\sigma) \subseteq \tau$ is a simplex of $K$ and so, $\varphi ^{-1}$ is simplicial.

Since $L$ is isomorphic to the core of $K$ by Theorem \ref{m2eselcore} and Remark \ref{m2vh}, the result is proved.  
\end{proof}

From this result we obtain an alternative proof of Corollary \ref{necsc}. If $K$ is vertex-homogeneous and strong collapsible, the core $\nerve ^2 (K)$ is a point. The $n$-th multiple of a point is a simplex.

\begin{coro}
If a vertex-homogeneous complex $K$ has a prime number of vertices, then it is minimal or it is a simplex. 
\end{coro}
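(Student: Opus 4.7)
The plan is to apply Proposition \ref{multi} directly. Let $K$ be a vertex-homogeneous complex with a prime number $p$ of vertices, and let $L$ be its core. By Proposition \ref{multi}, there exists $n\ge 1$ such that $K$ is isomorphic to the $n$-th multiple $nL$ of $L$. Since the vertex set of $nL$ is $V_L\times \{1,2,\ldots ,n\}$, we have
\[
p=|V_K|=|V_{nL}|=n\,|V_L|.
\]

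Because $p$ is prime, the only possibilities are $n=1$ or $n=p$. In the first case, $K\cong L$ is its own core, so $K$ is minimal. In the second case, $|V_L|=1$, so the core $L$ is a single point; then $K\cong pL$ is the $p$-th multiple of a point, which is by definition the $(p-1)$-simplex on the vertex set $\{*\}\times \{1,\ldots ,p\}$.

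There is really no obstacle here: all the substantive work has been done in Theorem \ref{m2eselcore} and Proposition \ref{multi}. The only thing to check is the elementary arithmetic observation that a prime admits only the trivial factorization, together with the trivial identification of the $p$-th multiple of a point with the $(p-1)$-simplex.
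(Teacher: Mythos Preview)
Your proof is correct and is exactly the argument the paper intends: the corollary is stated without proof immediately after Proposition \ref{multi}, and the deduction is precisely the factorization $p=n\,|V_L|$ that you carry out. There is nothing to add.
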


\begin{defi}
Let $K$ be a complex with vertex set $V_K$ and let $\mathcal{F}: V_K \to \mathbb{N}$ be a function. We define the complex $\mathcal{F}K$ whose vertices are the pairs $(v,i)$ with $v\in V_K$ and $1\le i\le \mathcal{F}(v)$ and whose simplices are the sets $\{(v_0, i_0), (v_1, i_1), \ldots , (v_r,i_r)\}$ such that the projections $\{v_0,v_1, \ldots ,v_r\}$ are simplices of $K$.

This generalizes the notion of $n$-multiple of $K$ which coincides with $\mathcal{F}K$ when $\mathcal{F}$ is the constant map $n$.
\end{defi}

\begin{teo} \label{reduccion}
Let $K$ be a simplicial complex, $\mathcal{F}:V_K\to \mathbb{N}$ and $n\ge 0$. If $\mathcal{F}K$ is $n$-collapsible, then $K$ is $n$-collapsible. In particular if $\mathcal{F}K$ is non-evasive, so is $K$. 
\end{teo}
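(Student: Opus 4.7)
The proof will hinge on a structural description of the links in $\mathcal{F}K$. A subset $\tau\subseteq V_{\mathcal{F}K}\smallsetminus\{(v,i)\}$ lies in $lk_{\mathcal{F}K}((v,i))$ if and only if $p(\tau)\cup\{v\}$ is a simplex of $K$, where $p$ is the projection to $V_K$. Splitting $\tau$ into the vertices that project to $v$ and those that do not, one obtains an isomorphism
\[
lk_{\mathcal{F}K}((v,i))\;\cong\;\Delta*\mathcal{G}\,lk_K(v),
\]
where $\Delta$ is the simplex on the $\mathcal{F}(v)-1$ vertices $\{(v,j):j\ne i\}$ and $\mathcal{G}$ is the restriction of $\mathcal{F}$ to $V_{lk_K(v)}$. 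In particular, whenever $\mathcal{F}(v)\geq 2$ this link is a simplicial cone and $(v,i)$ is dominated in $\mathcal{F}K$. Iterating the removal of such vertices produces a strong collapse $\mathcal{F}K\scp K_1$, where $K_1$ is the full subcomplex of $\mathcal{F}K$ on $\{(v,1):v\in V_K\}$ and is isomorphic to $K$ via $(v,1)\mapsto v$.

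I will prove the theorem by induction on $n$. For the base case $n=0$, suppose $\mathcal{F}K\scp *$. Let $K_0$ be the core of $K$; then $\mathcal{F}K\scp K_1\cong K\scp K_0$, so by the uniqueness of cores the minimal complex $K_0$ is also a core of $\mathcal{F}K$, hence a point. Therefore $K$ is strong collapsible.

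For the inductive step, fix an $n$-collapse sequence $u_1,\ldots,u_m$ of $\mathcal{F}K$, with a single vertex $u^*=(v^*,j^*)$ remaining. For each $v\in V_K\smallsetminus\{v^*\}$, let $\ell(v)$ be the step at which the last copy of $v$ is removed, and set $\ell(v^*)=\infty$; these values are pairwise distinct on $V_K\smallsetminus\{v^*\}$ since exactly one vertex is removed per step. Order the vertices of $K$ as $v_1,\ldots,v_{|V_K|-1},v_{|V_K|}=v^*$ by increasing $\ell$, and set $K_k=K\smallsetminus\{v_1,\ldots,v_{k-1}\}$. I claim that for each $k<|V_K|$ the link $lk_{K_k}(v_k)$ is $(n-1)$-collapsible, which exhibits the sequence $v_1,\ldots,v_{|V_K|-1}$ as an $n$-collapse of $K$ to a point.

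To prove this claim, write $(v_k,j_k):=u_{\ell(v_k)}$. Just before step $\ell(v_k)$, the unique surviving copy of $v_k$ is $(v_k,j_k)$, while a vertex $(w,l)$ with $w\ne v_k$ survives if and only if $\ell(w)\geq\ell(v_k)$, i.e.\ $w\in V_{K_k}$. Applying the structural isomorphism to the intermediate complex $A_{\ell(v_k)-1}$, the link of $(v_k,j_k)$ there is isomorphic to $\mathcal{H}\,lk_{K_k}(v_k)$, where $\mathcal{H}(w)$ counts the surviving copies of $w$ at that stage. By hypothesis this link is $(n-1)$-collapsible, so the inductive hypothesis applied to the pair $(lk_{K_k}(v_k),\mathcal{H})$ yields the claim. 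The ``in particular'' statement about non-evasiveness is then immediate, since a non-evasive complex is $n$-collapsible for some $n$. The principal technical obstacle is precisely this link identification: one must verify that the removals $u_1,\ldots,u_{\ell(v_k)-1}$ have eliminated exactly the copies $(v_k,l)$ with $l\ne j_k$ and all copies of vertices $w$ with $\ell(w)<\ell(v_k)$, so that the intermediate link in $\mathcal{F}K$ is genuinely a $\mathcal{H}$-blow-up of $lk_{K_k}(v_k)$ with no extraneous pieces.
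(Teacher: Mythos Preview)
Your proof is correct and follows essentially the same route as the paper: both establish the base case via the strong collapse $\mathcal{F}K\scp K_1\cong K$ (the paper invokes Lemma~\ref{alema1} here), and both handle the inductive step by ordering the vertices of $K$ according to the last removal time $\ell(v)$ of their copies in the given collapse of $\mathcal{F}K$, then identifying the link of that last copy in the intermediate complex as an $\mathcal{H}$-blow-up of $lk_{K_k}(v_k)$ so that the inductive hypothesis applies. One small wording slip: ``a vertex $(w,l)$ with $w\ne v_k$ survives if and only if $\ell(w)\ge\ell(v_k)$'' should read ``only if'', since earlier copies of such a $w$ may already have been removed---but you correctly absorb this into the function $\mathcal{H}$, so the argument goes through.
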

\begin{proof}
The proof is by induction in $n$. The full subcomplex $L$ of $\mathcal{F}K$ spanned by the vertices of the form $(v,1)$ with $v\in V_K$ is isomorphic to $K$. Moreover, every vertex $(v,i)$ of $\mathcal{F}K$ with $i\ge 2$ is dominated by the vertex $(v,1)$ of $L$. By Lemma \ref{alema1}, $\mathcal{F}K \scp L$. Thus, if $\mathcal{F}K$ is strong collapsible, so is $K$. This proves the case $n=0$.

Now assume that $\mathcal{F}K$ is $(n+1)$-collapsible. Then there exists an ordering $$(v_0,i_0), (v_1,i_1), \ldots (v_r,i_r)$$ of the vertices of $\mathcal{F}K$ such that $lk _{L_s} ((v_s, i_s))$ is $n$-collapsible for $0\le s< r$, where $L_0=\mathcal{F}K$ and $L_{s+1}=L_s \smallsetminus (v_s, i_s)$.

For each $v\in K$ consider the number $l_v=max \{j \ | \ v_j=v\}$. Let $w_0, w_1, \ldots , w_k$ be the ordering of the vertices of $K$ such that $l_{w_0}< l_{w_1} < \ldots < l_{w_k}$. We claim that it is possible to make the deletions of the vertices of $K$ in that order to show that $K\enemasunoc w_k$.

Define $K_0=K$ and $K_{q+1}=K_q \smallsetminus w_q$ for $0\le q <k$. We have to show that $lk _{K_q} (w_q)$ is $n$-collapsible for every $0\le q< k$. We already know that $lk _{L_{l_{w_q}}} ((w_q, i_{l_{w_q}}))$ is $n$-collapsible, so, by induction it suffices to prove that $lk _{L_{l_{w_q}}} ((w_q, i_{l_{w_q}}))$ is isomorphic to $\mathcal{F}'lk _{K_q} (w_q)$ for some $\mathcal{F}': V_{lk _{K_q} (w_q)}\to \mathbb{N}$.

The vertices of $lk _{L_{l_{w_q}}} ((w_q, i_{l_{w_q}}))$ are the vertices $(u,i)$ of $L_{l_{w_q}+1}$ such that $\{(u,i),$ $(w_q, i_{l_{w_q}})\}$ is a simplex of $\mathcal{F}K$. By definition of $l_{w_q}$, if $(u,i)\in L_{l_{w_q}+1}$, then $u=w_p$ for some $p>q$. Thus if $(u,i)\in lk _{L_{l_{w_q}}} ((w_q, i_{l_{w_q}}))$, $u\in lk _{K_q} (w_q)$. Conversely, if $u\in lk _{K_q} (w_q)$, then $u=w_p$ for some $p>q$ and then $(u,i_{l_u}) \in lk _{L_{l_{w_q}}} ((w_q, i_{l_{w_q}}))$. Moreover $(u,i_s) \in lk _{L_{l_{w_q}}} ((w_q, i_{l_{w_q}}))$ for every $l_{w_q} < s \le r$ such that $v_s=u$.    .

In general, a set of vertices $\{(u_0,i_0),(u_1,i_1), \ldots , (u_t,i_t)\}$ of $lk _{L_{l_{w_q}}} ((w_q, i_{l_{w_q}}))$ is a simplex if and only if $\{(u_0,i_0),(u_1,i_1), \ldots , (u_t,i_t), (w_q, i_{l_{w_q}})\}$ is a simplex of $\mathcal{F}K$ or equivalently if $\{u_0, u_1, \ldots, u_t\}\in lk _{K_q} (w_q)$. Then $lk _{L_{l_{w_q}}} ((w_q, i_{l_{w_q}}))$ is isomorphic to $\mathcal{F}'lk _{K_q} (w_q)$ where $\mathcal{F}'(u)$ is the cardinality of the set $\{ s \ | \ l_{w_q} < s \le r$ and $v_s=u \}$.    
\end{proof}

\begin{coro} \label{qwe}
The core of a vertex-homogeneous non-evasive complex is non-evasive.
\end{coro}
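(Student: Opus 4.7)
The plan is to combine the structural result on vertex-homogeneous complexes (Proposition \ref{multi}) with the descent property of $n$-collapsibility along the generalized multiple construction (Theorem \ref{reduccion}).

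First I would let $K$ be a vertex-homogeneous non-evasive complex and let $K_0$ denote its core. By Proposition \ref{multi}, there exists $n \ge 1$ such that $K$ is isomorphic to the $n$-th multiple $nK_0$ of its core. Since non-evasiveness depends only on the isomorphism type of a complex (it is defined recursively in terms of vertices and their links, both of which are preserved by simplicial isomorphisms), the multiple $nK_0$ is non-evasive.

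Next I would observe that the $n$-th multiple $nK_0$ is exactly $\mathcal{F}K_0$ for the constant function $\mathcal{F} \equiv n : V_{K_0} \to \mathbb{N}$, as noted in the definition preceding Theorem \ref{reduccion}. Thus $\mathcal{F}K_0$ is non-evasive. Applying the ``in particular'' clause of Theorem \ref{reduccion} (which states that non-evasiveness of $\mathcal{F}K_0$ descends to non-evasiveness of $K_0$), we conclude that $K_0$ is non-evasive, as desired.

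There is no substantial obstacle here: the two ingredients do essentially all the work, and the only point worth spelling out is the identification $nK_0 = \mathcal{F}K_0$ for $\mathcal{F}$ constant, together with the remark that non-evasiveness is an isomorphism invariant. The deeper content lies in the earlier results (Theorem \ref{m2eselcore} identifying the core with $\nerve^2(K)$, Proposition \ref{multi} writing $K$ as a multiple of its core, and Theorem \ref{reduccion} controlling $n$-collapsibility under the $\mathcal{F}$-construction), all of which may be invoked directly.
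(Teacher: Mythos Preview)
Your proof is correct and follows exactly the same approach as the paper: write $K$ as an $n$-th multiple of its core via Proposition~\ref{multi}, then apply Theorem~\ref{reduccion} to descend non-evasiveness. The only difference is that you spell out the identification $nK_0 = \mathcal{F}K_0$ for constant $\mathcal{F}$ and the isomorphism-invariance of non-evasiveness, which the paper leaves implicit.
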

\begin{proof}
Let $K$ be a vertex-homogeneous complex and let $L$ be its core. By Proposition \ref{multi}, $K=nL$ fore some $n$. If $K$ is non-evasive, then $L$ is non-evasive by Theorem \ref{reduccion}.
\end{proof}

In general, the core of a non-evasive complex need not be non-evasive as the following example shows.

\begin{ej}
The finite space $X$ whose Hasse diagram is as shown in Figure \ref{monstruom}, is collapsible and therefore $\kp (X)$ is a $1$-collapsible complex. In particular $\kp (X)$ is non-evasive. The point $x\in X$ is a beat point and then $\kp (X)\scp \kp (X\smallsetminus \{x\})$. The space $X\smallsetminus \{x\}$ has no weak points (cf. \cite{Bar3}) and therefore $\kp (X\smallsetminus \{x\})$ has no vertices with strong collapsible links by Theorem \ref{nueva}. In particular $\kp (X\smallsetminus \{x\})$ is the core of $\kp (X)$ and it is evasive because it is two-dimensional and not $1$-collapsible.

\begin{figure}[h!]
\begin{center}
\includegraphics[scale=0.8]{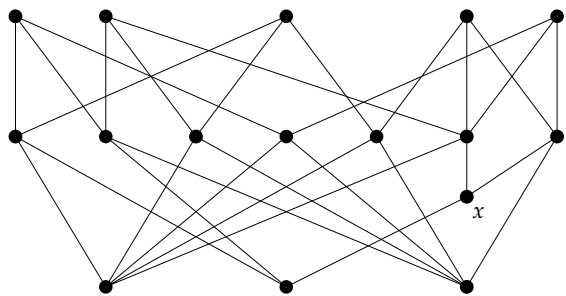}
\caption{A collapsible finite space with a non-collapsible core.}\label{monstruom}
\end{center}
\end{figure}
\end{ej}

In view of Corollary \ref{qwe} and Proposition \ref{multi}, in order to prove the Evasiveness conjecture for simplicial complexes it suffices to verify it for minimal complexes. In other words, the following statement is equivalent to the Evasiveness conjecture. 

\begin{conj}[Evasiveness conjecture for minimal complexes]
If $K$ is a minimal, vertex-homogeneous, non-evasive complex, then it is a point. 
\end{conj}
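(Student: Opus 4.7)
The plan is to argue by contradiction: assume $K$ is minimal, vertex-homogeneous and non-evasive with more than one vertex, and attempt to derive a contradiction. A first easy observation is that the combined hypotheses propagate a lot of structure: by non-evasiveness there exists some vertex $v$ with $lk_K(v)$ and $K\smallsetminus v$ both non-evasive, and by vertex-homogeneity every vertex $w$ satisfies $lk_K(w)\cong lk_K(v)$ and $K\smallsetminus w\cong K\smallsetminus v$, so \emph{every} link is non-evasive. On the other hand, minimality forces no $lk_K(v)$ to be a simplicial cone. Moreover, since $K$ is minimal, Theorem \ref{m2eselcore} and Proposition \ref{multi} give $K\cong \nerve^2(K)$ with multiplicity one (the equivalence classes of $\prec$ are singletons), and in particular the associated finite space $\x(K)$ is non-evasive by Corollary \ref{mainnep}.

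A natural first attempt is induction on $|V_K|$. The obvious candidate for the inductive step is to restrict attention to $K\smallsetminus v$, but this subcomplex generally loses vertex-homogeneity, so the hypothesis cannot be re-applied. A slightly more promising variant is to pass to the link: the stabilizer $Aut(K)_v\le Aut(K)$ acts on $lk_K(v)$, which is non-evasive. If one could ensure that this action is vertex-homogeneous on the core of $lk_K(v)$, and that this core is still minimal with strictly fewer vertices, induction would conclude. The hard bookkeeping here is verifying that the stabilizer is rich enough to be transitive on the vertices of $\nerve^2(lk_K(v))$, which is not automatic from transitivity of $Aut(K)$ on $V_K$.

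An alternative strategy goes through finite spaces and fixed-point theory. The group $Aut(K)$ acts on the non-evasive space $\x(K)$, and if an equivariant version of Stong's Theorem \ref{fixedstong} held for non-evasive (rather than merely contractible) finite spaces, the support of a fixed simplex would, by vertex-transitivity of the action, be forced to contain every vertex of $K$; then $K$ would be a simplex, and hence (since simplices of positive dimension have dominated vertices) a single point by minimality. Concretely, one would like to strengthen Theorem \ref{equivariantlefschetz} by replacing ``strong collapsible'' with ``non-evasive,'' perhaps by inducting along a witness non-evasive deletion sequence and tracking invariance under a chosen $p$-subgroup of $Aut(K)$.

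The main obstacle, and the reason the conjecture is open, is precisely that the required equivariant fixed-point result does not hold unrestrictedly: Oliver's characterization shows that many groups fail to have the fixed-point property even on contractible finite complexes, and asking for it on non-evasive ones would essentially be the Evasiveness conjecture itself. Any honest proof thus needs to exploit some feature beyond abstract fixed-point theory, presumably the rigidity provided by minimality (which rules out any simplification move preserving homotopy type) combined with the iterated nerve structure $K\cong \nerve^2(K)$. A plausible concrete line of attack is to show that the cardinality $|V_K|$ of such a $K$ must be a prime power, reducing the problem to the Kahn--Saks--Sturtevant partial result; the nerve-theoretic Theorem \ref{m2eselcore} and the multiplicity structure of Proposition \ref{multi} seem to be exactly the right tools to push an orbit-counting/$p$-group argument, although making this precise is the step I expect to be the real difficulty.
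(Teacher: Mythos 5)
There is nothing to compare your attempt against, because the paper does not prove this statement: it is explicitly labelled a \emph{conjecture} and is presented as the endpoint of the paper's reduction, not as a theorem. What the paper actually establishes is that this restricted statement is \emph{equivalent} to the general Evasiveness conjecture: Proposition \ref{multi} shows a vertex-homogeneous complex is an $n$-th multiple of its core, Theorem \ref{m2eselcore} shows that core is again vertex-homogeneous, and Corollary \ref{qwe} (via Theorem \ref{reduccion}) shows the core inherits non-evasiveness --- so proving the conjecture for minimal complexes would prove it in general. Since the general conjecture is open, your proposal could not have succeeded, and to your credit you do not pretend it does.

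That said, as a proof your text has a total gap: none of the three strategies you sketch is carried to completion, and each fails for the reasons you yourself identify. The induction on links founders because transitivity of $Aut(K)$ on $V_K$ gives no control over the stabilizer's action on $lk_K(v)$; the equivariant fixed-point route would require extending Theorem \ref{fixedstong} from contractible to non-evasive finite spaces for arbitrary groups, which Oliver's examples show cannot be done by soft arguments; and the suggestion that $|V_K|$ must be a prime power is unsupported --- nothing in the paper (nor, as far as is known, anywhere else) forces this, and it is precisely the obstruction to invoking the Kahn--Saks--Sturtevant result. Your preliminary observations are correct and consistent with the paper (every link of $K$ is non-evasive and non-cone; minimality gives $K\cong\nerve^2(K)$; $\x(K)$ is non-evasive by Corollary \ref{mainnep}), but they do not close the argument. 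The honest conclusion is that the statement remains open, and the paper's contribution is the reduction to it, not its resolution.
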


\bigskip

\bigskip

\bigskip

\end{document}